\newtheorem{theorem}{Theorem}[section]
\newtheorem{lemma}[theorem]{Lemma}
\newtheorem{corollary}[theorem]{Corollary}
\newtheorem{proposition}[theorem]{Proposition}
\newtheorem*{lemma*}{Lemma}
\newtheorem{sublemma}{}[theorem]
\theoremstyle{definition}
\theoremstyle{remark}
\numberwithin{equation}{section}
\newcommand{\ba}{\backslash}
\begin{document}

\title[$2$-cographs]{Generalizing Cographs to $2$-cographs}

\author{James Oxley}
\address{Mathematics Department\\
Louisiana State University\\
Baton Rouge, Louisiana}
\email{oxley@math.lsu.edu}

\author{Jagdeep Singh}
\address{Mathematics Department\\
Louisiana State University\\
Baton Rouge, Louisiana}
\email{jsing29@math.lsu.edu}

\subjclass{05C40, 05C83}
\date{\today}

\begin{abstract}
 A graph in which every connected induced subgraph has a disconnected complement is called a cograph. Such graphs are precisely the graphs that do not have the 4-vertex path as an induced subgraph. We define a $2$-cograph to be a graph  in which the complement of every $2$-connected induced subgraph  is not $2$-connected. We show that, like cographs, 
 $2$-cographs can be recursively defined. But, unlike cographs, $2$-cographs are closed under induced minors. We
 characterize the class of non-$2$-cographs for which every proper induced minor is a $2$-cograph. We further find the finitely many members of this class whose complements are also  induced-minor-minimal non-$2$-cographs. 
\end{abstract}

\maketitle

\section{Introduction}

In this paper, we only consider simple graphs. Except where indicated otherwise, our notation and terminology will follow \cite{text}. An {\bf induced minor} of a graph $G$ is any graph $H$ that can be obtained from $G$ by a sequence of operations each consisting of a vertex deletion or an edge contraction. If $H \neq G$, then $H$ is  a {\bf proper induced minor} of $G$. A consequence of the fact that we consider only simple graphs here is that, when we write $G/e$ for an edge $e$ of a graph $G$, we mean the graph that we get from the multigraph obtained by contracting the edge $e$ by deleting all but one edge from each class of parallel edges.

A {\bf cograph} is a graph in which every connected induced subgraph has a disconnected complement. By convention, the graph $K_1$ is taken to be a cograph. Replacing connectedness by $2$-connectedness, we define a graph $G$ to be a {\bf $2$-cograph} if $G$ has no induced subgraph $H$ such that both $H$ and its complement, $\overline{H}$, are $2$-connected. Note that $K_1$ is a $2$-cograph.
Cographs have been extensively studied over the last fifty years (see, for example, \cite{jung, sein, corneil2}). They are also called $P_4$-free graphs due to following characterization \cite{corneil}.

\begin{theorem}
\label{cographs_characterisation}
A graph $G$ is a cograph if and only if $G$ does not contain the path $P_4$ on four vertices as an induced subgraph.
\end{theorem}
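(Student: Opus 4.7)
The plan is to prove the two directions of the biconditional separately, with the bulk of the work concentrated in a single structural lemma.

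For the forward direction ($\Rightarrow$), I would first observe that $P_4$ is self-complementary: if its vertex sequence is $v_1 v_2 v_3 v_4$, then the complement has edges $v_1v_3, v_1v_4, v_2v_4$, yielding the path $v_3 v_1 v_4 v_2$. Hence both $P_4$ and $\overline{P_4}$ are connected. Consequently, if $G$ contains an induced copy $H$ of $P_4$, then $H$ is a connected induced subgraph of $G$ whose complement $\overline{H}$ is also connected, violating the defining property of a cograph. Contrapositively, every cograph is $P_4$-free.

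For the reverse direction ($\Leftarrow$), the crux is the following lemma (essentially due to Seinsche): every $P_4$-free graph on at least two vertices is either disconnected or has a disconnected complement. Granted this lemma, suppose $G$ is $P_4$-free and let $H$ be any connected induced subgraph of $G$ with $|V(H)| \geq 2$. Being an induced subgraph of a $P_4$-free graph, $H$ is itself $P_4$-free, and being connected, the lemma forces $\overline{H}$ to be disconnected. This verifies the cograph condition for $G$.

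To prove the lemma, I would argue by contradiction: assume $H$ and $\overline{H}$ are both connected and that $H$ is $P_4$-free. Since $P_4$ is self-complementary, $\overline{H}$ is $P_4$-free as well. A shortest path of length at least three in $H$ would itself be an induced $P_4$, so $\mathrm{diam}(H) \leq 2$, and symmetrically $\mathrm{diam}(\overline{H}) \leq 2$. Fix a vertex $v$ and partition $V(H) \setminus \{v\}$ into $A = N_H(v)$ and $B = V(H) \setminus (A \cup \{v\})$; both sets are non-empty by the connectedness of $H$ and $\overline{H}$ respectively. Each $b \in B$ has some neighbor $a_b \in A$ (by diameter two in $H$). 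If a pair $(a,b) \in A \times B$ is non-adjacent in $H$, inspecting the induced subgraph on $\{v, a, a_b, b\}$ and forbidding $P_4$ forces $aa_b \in E(H)$. Iterating these local constraints produces a module-like partition of $A \cup B$ from which one extracts a contradiction with the two-sided connectedness.

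The main obstacle is the clean execution of this lemma: the bookkeeping of adjacencies between $A$ and $B$ under repeated applications of the $P_4$-freeness constraint is the only genuinely combinatorial step, while the forward direction and the reduction of the theorem to the lemma are essentially formal.
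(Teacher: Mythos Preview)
The paper does not actually prove Theorem~\ref{cographs_characterisation}; it is quoted as a known result with a citation to Corneil, Lerchs, and Stewart Burlingham, so there is no paper proof to compare against. Your outline is the standard route (the reverse direction via Seinsche's lemma that a $P_4$-free graph on at least two vertices has either $G$ or $\overline{G}$ disconnected), and the forward direction and the reduction to the lemma are fine.

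The genuine gap is in your proof of the lemma itself. You correctly extract the local constraint: if $a\in A$, $b\in B$, $ab\notin E(H)$ and $a_b\in A$ is a neighbour of $b$, then $\{a,v,a_b,b\}$ would induce $P_4$ unless $aa_b\in E(H)$. But the sentence ``Iterating these local constraints produces a module-like partition of $A\cup B$ from which one extracts a contradiction with the two-sided connectedness'' is exactly where the argument has to happen, and you have not said what the iteration is or what the contradiction is. As written, nothing prevents, say, every $b\in B$ from being adjacent to every $a\in A$; then your constraint is vacuous and you have used neither the connectedness of $\overline{H}$ nor the $P_4$-freeness of $\overline{H}$ in any essential way.

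A clean way to finish is by induction on $|V(H)|$: delete a vertex $v$, apply the inductive hypothesis to $H-v$, and suppose (by symmetry) that $H-v$ is disconnected with components $C_1,\dots,C_k$. Connectedness of $H$ gives $v$ a neighbour in each $C_i$; connectedness of $\overline{H}$ prevents $v$ from being universal, so some $C_i$ contains both a neighbour and a non-neighbour of $v$, hence an edge $u'w'$ with $vu'\in E(H)$, $vw'\notin E(H)$. Picking any neighbour $x$ of $v$ in some $C_j$ with $j\neq i$ yields the induced path $x\,v\,u'\,w'$. If you prefer your non-inductive setup, you must actually carry out the module argument: show that the relation ``$b$ and $b'$ have the same neighbourhood in $A$'' interacts with the local constraint to force a join or co-join structure, and then derive disconnectedness of $H$ or $\overline{H}$. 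Either way, that step needs to be written out, not asserted.
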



In Section 2, we show that $2$-cographs can be recursively defined, that every induced minor of a $2$-cograph is also a $2$-cograph, and that the complement of every $2$-cograph is also a $2$-cograph. 
In addition, we correct a result of Akiyama and Harary \cite{harary} that had claimed to characterize when the complement of a $2$-connected graph is $2$-connected.

Because the class of $2$-cographs is closed under induced minors, our initial goal was to find all non-$2$-cographs  with the property that every  proper induced minor is a $2$-cograph.  But, as we show in Section $3$, in contrast to Theorem \ref{cographs_characterisation}, there are infinitely many such non-$2$-cographs. However, we were able to determine all infinite families of such graphs. For all $k \geq 1$, let $M_k$ and $N_k$ be the graphs shown in Figures \ref{3_deletable_vertices} and \ref{4_deletable_vertices}, respectively. Let $M_k'$ and $N_k'$ be obtained from $M_k$ and $N_k$ by adding the edge $st$. Further, let $N_k''$ be the graph obtained from $N_k'$ by adding the edge $uz$; let $L_k$ be the graph shown in Figure $\ref{2_deletable_vertices}$; and, for all $j \geq 0$, let $F_j$ be the graph shown in Figure $\ref{James_new}$. The next two theorems are the main results of the paper.

\begin{theorem}
\label{main_new_addition}
Let $G$ be a graph that is not a $2$-cograph such that every proper induced minor of $G$ is a $2$-cograph. Then

\begin{enumerate}[label=(\roman*)]
    \item $|V(G)| \leq 16$; or
    \item $G$ is the complement of a cycle of length at least five; or
    \item for some positive integer $k$, the complement of $G$ is isomorphic to $F_{k-1}, L_k, M_k, M_k', N_k, N_k', or N_k''$.
\end{enumerate}
\end{theorem}

As we were unable to improve this bound of $16$ vertices and the task of finding induced-minor-minimal non-$2$-cographs with at most $16$ vertices seemed computationally infeasible, we were prompted to try to determine those graphs $G$ for which both $G$ and  $\overline{G}$ are induced-minor-minimal non-$2$-cographs. The following theorem proves that, up to isomorphism, there are finitely many such graphs $G$. Its proof occupies most of Section $4$.

\begin{theorem}
\label{main}
Let $G$ be a graph. Suppose that $G$ is not a $2$-cograph but that every proper induced minor of each of $G$ and $\overline{G}$ is a $2$-cograph. Then $5 \leq |V(G)| \leq 10$.
\end{theorem}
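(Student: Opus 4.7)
My plan is as follows. The property of being a $2$-cograph is symmetric under complementation, since the defining condition is symmetric in $H$ and $\overline{H}$; hence $\overline{G}$ is also an induced-minor-minimal non-$2$-cograph, and the hypotheses are symmetric in $G$ and $\overline{G}$. For the lower bound, since $G$ is not a $2$-cograph it contains an induced subgraph $H$ with both $H$ and $\overline{H}$ $2$-connected. A direct inspection rules out $|V(H)| \le 4$: the $2$-connected graphs on at most four vertices are $K_3$, $C_4$, $K_4 - e$, and $K_4$, and each has a disconnected complement. Hence $|V(G)| \ge |V(H)| \ge 5$.

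The first step of the upper bound is a reduction showing that $G$ itself is $2$-connected with a $2$-connected complement. I would let $H$ be a smallest-order induced subgraph of $G$ with both $H$ and $\overline{H}$ $2$-connected. If some vertex $v \in V(G) \setminus V(H)$ existed, then $G - v$ would be a proper induced minor still containing $H$ as an induced subgraph, and hence not a $2$-cograph, contradicting the minimality of $G$. So $V(H) = V(G)$: the graph $G$ itself is $2$-connected with $2$-connected complement, and moreover no proper induced subgraph of $G$ has this property.

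Now suppose $n := |V(G)| \ge 11$, aiming at a contradiction. The goal is to construct either an edge $e$ of $G$ with $G/e$ not a $2$-cograph, or a non-edge $f$ of $G$ with $\overline{G}/f$ not a $2$-cograph; either case contradicts the minimality hypothesis on $G$ or $\overline{G}$. Because $G$ and $\overline{G}$ are both $2$-connected, the corrected Akiyama--Harary characterization from Section~2 tightly constrains how vertices attach to one another. Contracting a $G$-edge $uv$ produces a merged vertex with $G$-neighborhood $\bigl(N_G(u) \cup N_G(v)\bigr) \setminus \{u,v\}$, whereas contracting a $\overline{G}$-edge $uv$ in $\overline{G}$ corresponds (after complementing) to merging $u$ and $v$ in $G$ to a vertex whose $G$-neighborhood is $N_G(u) \cap N_G(v)$. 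I would choose the pair $\{u,v\}$ by a pigeonhole argument on how the other $n - 2 \ge 9$ vertices partition according to their adjacencies to $u$ and $v$, and then show that for some such pair one of the two contractions leaves in place an induced subgraph $H'$ with both $H'$ and $\overline{H'}$ $2$-connected.

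The main obstacle is that the reduction step precludes the easy strategy of ``contract away from a fixed small obstruction,'' since no small induced obstruction is available inside $G$ itself: the contracted graph must \emph{create} a new obstruction. Verifying that this happens for every $G$ meeting the hypotheses with $|V(G)| \ge 11$ requires a delicate case analysis on neighborhood intersections and symmetric differences of vertex pairs, and it is this analysis --- not any single clean lemma --- that I expect to dominate Section~3 and to pin down the tight numeric bound $|V(G)| \le 10$, along with the enumeration of examples promised in the abstract.
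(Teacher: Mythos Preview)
Your lower bound and your reduction step are both correct and match the paper: every graph on at most four vertices is a $2$-cograph, so $|V(G)|\ge 5$; and if $H$ is an induced subgraph of $G$ with both $H$ and $\overline H$ $2$-connected, then deleting any vertex outside $V(H)$ would contradict minimality, so $H=G$ and both $G$ and $\overline G$ are themselves $2$-connected. You are also right that the task for the upper bound is to produce an edge $e$ of $G$ (or of $\overline G$) whose contraction still yields a non-$2$-cograph, or a vertex whose deletion does.

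The gap is in your mechanism for finding such an $e$. Choosing a single pair $\{u,v\}$ and pigeonholing on how the other $n-2$ vertices split by their adjacencies to $u$ and $v$ does not, by itself, give simultaneous control over the $2$-connectedness of $G/e$ \emph{and} of $\overline{G/e}$; there is no evident way to select the pair so that both survive, and the Akiyama--Harary characterization you cite is too coarse for this. This is not just a matter of filling in details: the paper's proof does not proceed this way, and it needs substantially heavier tools.

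What the paper actually does is organize the argument around \emph{contractible} edges (edges $e$ with $G/e$ still $2$-connected) and around $2$-cuts. Two theorems of Chan are imported: if every contractible edge of a $2$-connected graph meets a fixed $3$- or $4$-element set $S$, then $G-S$ has only a few non-isolated vertices. A separate elementary lemma says that if $\{g_1,g_2\}$ is a $2$-cut with both sides of size $\ge 2$ and each $g_i$ of red degree $\ge 2$, then $\overline G$ is $2$-connected. Together these allow one to argue that, for a $2$-cut whose vertices have large red degree, some contractible edge $e$ away from the cut satisfies that $\overline{G/e}$ is also $2$-connected --- a contradiction. The residual cases are precisely those where cut vertices have red or green degree exactly two, which funnels the analysis into the sets $V_g$ and $V_r$ of degree-two vertices in $G$ and in $\overline G$. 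A counting argument bounds $|V_g|$ and $|V_r|$, and a short case analysis on $|V_g|\in\{1,2,3,4\}$ finishes. None of this structure --- contractible edges, Chan's theorems, the $V_g/V_r$ dichotomy --- is present in your sketch, and these are the ideas that do the real work.
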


The unique $5$-vertex graph satisfying the hypotheses of the last theorem is $C_5$, the $5$-vertex cycle. In the appendix, we list all of the other graphs that satisfy these hypotheses. 

\begin{figure}
    \centering
    \begin{tikzpicture}[scale=0.7,colorstyle/.style={circle, draw=black!100,fill=black!20, thick, inner sep=2pt, minimum size=0.5mm}]

    \node (v2) at (-13,1)[colorstyle]{$v_2$};
    \node (yk) at (-11.5,2.5)[colorstyle]{$y_j$};
    \node (y1) at (-11,5)[colorstyle]{$y_1$};
    \node (y0) at (-11.5,7)[colorstyle]{$y_0$};
    \node (u2) at (-13,8.5)[colorstyle]{$u_2$};
    
    \node (x) at (-17,4.5)[colorstyle]{$x$};
    
    \node (u1) at (-17.5,6.5)[colorstyle]{$u_1$};
    
    \node (v1) at (-17.5,2.5)[colorstyle]{$v_1$};
    

    \draw[thick] (y1)--(y0)--(u2)--(u1)--(x)--(v1)--(v2)--(yk);
    
    \draw[thick]
    (x)--(y0)
    (x)--(y1)
    (x)--(yk);

    \draw[thick,loosely dotted] (y1)--(yk);

    \node (n1) at (-8,1)[colorstyle]{$v_1$};
    \node (n2) at (-8,3.4)[colorstyle]{$w$};
    \node (n3) at (-8,5.8)[colorstyle]{$x$};
    \node (n4) at (-8,8.2)[colorstyle]{$u_1$};
    \node (m1) at (-6,1)[colorstyle]{$v_2$};
    \node (m2) at (-6,3.4)[colorstyle]{$z$};
    \node (m3) at (-6,5.8)[colorstyle]{$y$};
    \node (m4) at (-6,8.2)[colorstyle]{$u_2$};

    \draw[thick] (n1)--(n2)--(n3)--(n4)--(m4)--(m3)--(m2)--(m1)--(n1);
    \draw[thick] (n2)--(m2)
                 (n3)--(m3);
                 
     \draw[thick] (n2)--(m3)
                  (n3)--(m2);

    \node (x1) at (-3,1)[colorstyle]{$v_1$};
    \node (x2) at (-3,3.4)[colorstyle]{$w$};
    \node (x3) at (-3,5.8)[colorstyle]{$x$};
    \node (x4) at (-3,8.2)[colorstyle]{$u_1$};
    \node (y1) at (-1,1)[colorstyle]{$v_2$};
    \node (y2) at (-1,3.4)[colorstyle]{$z$};
    \node (y3) at (-1,5.8)[colorstyle]{$y$};
    \node (y4) at (-1,8.2)[colorstyle]{$u_2$};

    \draw[thick] (x1)--(x2)--(x3)--(x4)--(y4)--(y3)--(y2)--(y1)--(x1);
    \draw[thick] (x2)--(y2)
                 (x3)--(y3);
    
    \draw[thick] (y2)--(x3);

\node (n1) at (-14.5,-0.5){$F_j~(j \geq 0)$};

\node (n1) at (-7,-0.5){$H_1$};

\node (n1) at (-2,-0.5){$H_2$};

\end{tikzpicture}
\caption{The complements of the induced-minor-minimal non-$2$-cographs that are critically $2$-connected.}
\label{James_new}
\end{figure}
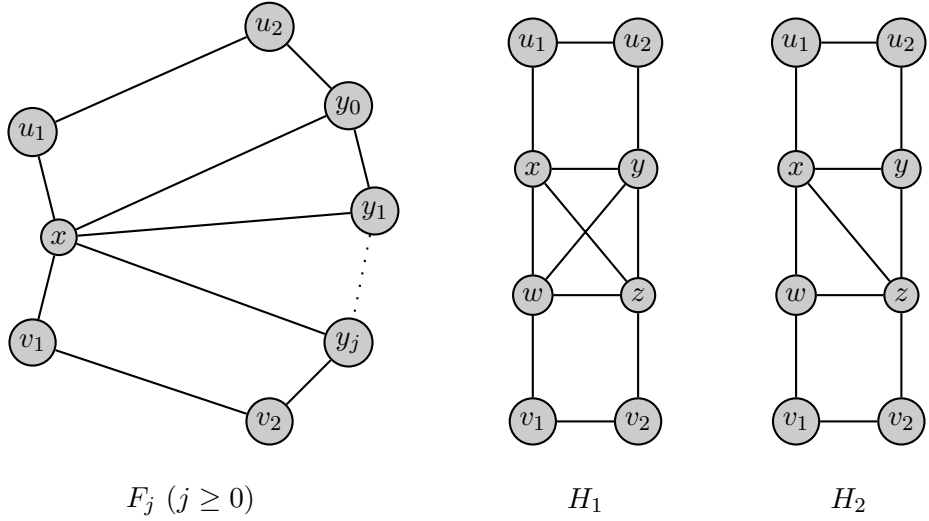

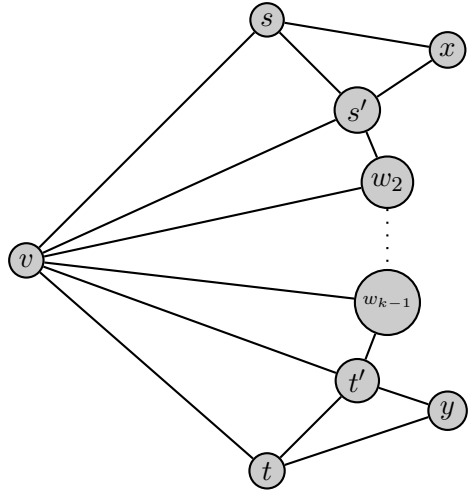
\begin{figure}
    \centering
    \begin{tikzpicture}[scale=0.8,colorstyle/.style={circle, draw=black!100,fill=black!20, thick, inner sep=2pt, minimum size=0.5mm}]

    \node (v2) at (-13,1)[colorstyle]{$t$};
    \node (yk) at (-11.5,2.5)[colorstyle]{$t'$};
    \node (y1) at (-11,3.8)[colorstyle]{\tiny $w_{k-1}$};
    \node (w1) at (-11,5.8)[colorstyle]{$w_2$};
    \node (y0) at (-11.5,7)[colorstyle]{$s'$};
    \node (u2) at (-13,8.5)[colorstyle]{$s$};
    
    \node (x) at (-17,4.5)[colorstyle]{$v$};
    
    \node (u1) at (-10,8)[colorstyle]{$x$};
    
    \node (v1) at (-10,2)[colorstyle]{$y$};

    \draw[thick] (y0)--(u2)--(u1)
    (v1)--(v2)--(yk);
    
     \draw[thick] (y0)--(w1)
     (yk)--(y1);
    
    \draw[thick]
    (x)--(y0)
    (x)--(y1)
    (x)--(yk)
    (x)--(v2)
    (x)--(u2)
    (u1)--(y0)
    (v1)--(yk)
    (x)--(w1);

    \draw[thick, loosely dotted] (w1)--(y1);

\end{tikzpicture}
\caption{For each $k \geq 1$, the complement of the above graph $L_k$ is an induced-minor-minimal non-$2$-cograph.}
\label{2_deletable_vertices}
\end{figure}

\begin{figure}
    \centering
    \begin{tikzpicture}[scale=0.7,colorstyle/.style={circle, draw=black!100,fill=black!20, thick, inner sep=2pt, minimum size=0.5mm}]

    \node (v2) at (-13,1)[colorstyle]{$u$};
    \node (yk) at (-11.5,2.5)[colorstyle]{$u'$};
    \node (y1) at (-11,4.5)[colorstyle]{\tiny $w_{k-1}$};
    \node (y0) at (-11.5,7)[colorstyle]{$w_1$};
    \node (u2) at (-13,8.5)[colorstyle]{$s$};
    
    \node (x) at (-17,4.5)[colorstyle]{$v$};
    
    \node (u1) at (-10,8)[colorstyle]{$t$};
    
    \node (v1) at (-10,2)[colorstyle]{$y$};
    
    \node (t) at (-11,9)[colorstyle]{$x$};
    
    \draw[thick] (y0)--(u2)
    (v1)--(v2)--(yk);
    
    \draw[thick]
    (t)--(u2)
    (y1)--(yk)
    (t)--(u1)
    (x)--(y0)
    (x)--(y1)
    (x)--(yk)
    (x)--(v2)
    (x)--(u2)
    (u1)--(y0)
    (v1)--(yk);
    
    \draw[thick] (x) to [out=100,in=90] (u1);

    \draw[thick, loosely dotted] (y0)--(y1);

\end{tikzpicture}
\caption{$M_k$, a graph whose complement is an induced-minor-minimal non-$2$-cograph.}
\label{3_deletable_vertices}
\end{figure}

\begin{figure}
    \centering
    \begin{tikzpicture}[scale=0.7,colorstyle/.style={circle, draw=black!100,fill=black!20, thick, inner sep=2pt, minimum size=0.5mm}]

    \node (v2) at (-13,1.4)[colorstyle]{$u$};
    \node (yk) at (-11.5,2.5)[colorstyle]{$w_k$};
    \node (y1) at (-11,5)[colorstyle]{$w_2$};
    \node (y0) at (-11.5,7)[colorstyle]{$w_1$};
    \node (u2) at (-13,8.5)[colorstyle]{$s$};
    
    \node (x) at (-17,4.5)[colorstyle]{$v$};
    
    \node (u1) at (-10,8)[colorstyle]{$t$};
    
    \node (v1) at (-10,2)[colorstyle]{$z$};
    
    \node (t) at (-11,9)[colorstyle]{$x$};
    
    \node (z) at (-11, 1)[colorstyle]{$y$};
    
    \draw[thick] (y1)--(y0)--(u2)
    (v2)--(yk);
    
    \draw[thick]
    (z)--(v1)
    (z)--(v2)
    (t)--(u1)
    (t)--(u2)
    (x)--(y0)
    (x)--(y1)
    (x)--(yk)
    (x)--(v2)
    (x)--(u2)
    (u1)--(y0)
    (v1)--(yk);
    
    \draw[thick] (x) to [out=110,in=80] (u1);
    \draw[thick] (x) to [out=270,in=300] (v1);
    
    \draw[thick, loosely dotted] (y1)--(yk);

\end{tikzpicture}
\caption{$N_k$, a graph whose complement is an induced-minor-minimal non-$2$-cograph.}
\label{4_deletable_vertices}
\end{figure}

\section{Preliminaries}

Let $G$ be a graph. A vertex $u$ of $G$ is a {\bf neighbour} of a vertex $v$ of $G$ if $uv$ is an edge of $G$.  The {\bf neighbourhood $N_G(v)$} of $v$ in $G$ is the set of all neighbours of $v$ in $G$. 
Viewing $G$ as a subgraph of $K_n$ where $n = |V(G)|$, we colour the edges of $G$ green while assigning the colour red to the non-edges of $G$. In this paper, we use the terms {\bf green graph} and {\bf red graph} for $G$ and its complementary graph $\overline{G}$, respectively. An edge of $G$ is called a {\bf green edge} while a {\bf red edge} refers to an edge of $\overline{G}$. The {\bf green degree} of a vertex $v$ of $G$ is the number of {\bf green neighbours} of $v$, while the {\bf red degree} of $v$ is its number of {\bf red neighbours}.

Let $G_1$ and $G_2$ be graphs. If their vertex sets are disjoint, their {\bf $0$-sum}, $G_1 \oplus_0 G_2$, is their disjoint union. Now, suppose that $V(G_1) \cap V(G_2) = T$, that $G_1[T]=G_2[T]$, and that $|T|=t$. Then the union of $G_1$ and $G_2$, which has vertex set $V(G_1) \cup V(G_2)$ and edge set $E(G_1) \cup E(G_2)$, is a {\bf $t$-sum}, $G_1 \oplus_t G_2$, of $G_1$ and $G_2$.

 For $k \geq 1$, a graph $G$ is a {\bf $k$-cograph} if, for every induced subgraph $H$ of $G$, at least one of $H$ and $\overline{H}$ is not $k$-connected. Thus a $1$-cograph is just a cograph. Clearly, every $k$-cograph is also a $(k+1)$-cograph.

We omit the straightforward proofs of the next three results.

\begin{lemma}
\label{closed_induced_subgraph}
Let $G$ be a $k$-cograph. 
\begin{enumerate}[label=(\roman*)]
    \item Every induced subgraph of $G$ is a $k$-cograph.
    \item $\overline{G}$ is a $k$-cograph.
\end{enumerate}
\end{lemma}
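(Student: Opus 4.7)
The plan is to derive both parts directly from the definition: $G$ is a $2$-cograph precisely when no induced subgraph $H$ of $G$ has both $H$ and $\overline{H}$ being $2$-connected. Both parts reduce to this observation together with two elementary facts about induced subgraphs.

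For part (i), I would let $G'$ be an arbitrary induced subgraph of $G$ and use the transitivity of ``is an induced subgraph of'': any induced subgraph $H$ of $G'$ is also an induced subgraph of $G$. Hence a hypothetical bad $H$ inside $G'$ (with $H$ and $\overline{H}$ both $2$-connected) would immediately be a bad induced subgraph of $G$, contradicting the assumption that $G$ is a $2$-cograph. So $G'$ has no such $H$ and is itself a $2$-cograph.

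For part (ii), the key observation is that complementation commutes with vertex-induced restriction: for any $S \subseteq V(G)$, one has $\overline{G}[S] = \overline{G[S]}$. Thus, given an induced subgraph $H$ of $\overline{G}$ on vertex set $S$, the graph $G[S]$ equals $\overline{H}$ and is an induced subgraph of $G$. If both $H$ and $\overline{H}$ were $2$-connected, then the induced subgraph $\overline{H}$ of $G$ would also have the property that it and its complement (namely $H$) are both $2$-connected, again contradicting that $G$ is a $2$-cograph.

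There is no real obstacle here, which is why the authors omit the argument; the two parts amount to pushing the definition through two standard closure properties of induced subgraphs. The only point worth highlighting is the commutativity of complementation with vertex-induced restriction in part (ii), as this is what makes the class of $2$-cographs self-complementary.
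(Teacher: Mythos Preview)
Your proposal is correct and matches the paper's view: the authors explicitly omit the proof as straightforward, and your argument is exactly the kind of direct unwinding of the definition they have in mind. The only nontrivial point you flag, that complementation commutes with vertex-induced restriction, is indeed the sole thing worth noting for part (ii).
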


\begin{lemma}
\label{closed_direct_sum}
For $0 \leq t < k$, a $t$-sum of two $k$-cographs  is a $k$-cograph.
\end{lemma}


\begin{lemma}
\label{complementation_contraction_together}
Let $G$ be a graph and let $uv$ be an edge $e$ of $G$. Then  $\overline{G/e}$ is the graph obtained by adding a vertex $w$ with neighbourhood $N_{\overline{G}}(u) \cap N_{\overline{G}}(v)$ to the graph $\overline{G} - \{u,v\}$.
\end{lemma}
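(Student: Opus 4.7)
The plan is a direct verification by unfolding the definitions of contraction and complementation on a common vertex set. First I would fix notation: let $W=V(G)\setminus\{u,v\}$, and write $H$ for the graph described in the statement, so $V(H)=W\cup\{w\}$, the edges of $H$ inside $W$ are exactly the red edges of $\overline{G}$ with both endpoints in $W$, and $N_H(w)=N_{\overline{G}}(u)\cap N_{\overline{G}}(v)$. Since we deal only with simple graphs, the contraction of $e$ replaces $u$ and $v$ by a single vertex $w$ whose neighbourhood is $(N_G(u)\cup N_G(v))\setminus\{u,v\}$, with any parallel edges produced being merged into one. In particular, $V(G/e)=W\cup\{w\}=V(H)$, so $\overline{G/e}$ and $H$ have the same vertex set, and it suffices to check equality of their edge sets.

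The verification then splits into two cases according to whether $w$ is an endpoint. For a pair $x,y\in W$, contraction does not affect adjacencies within $W$, hence $xy\in E(G/e)$ iff $xy\in E(G)$, so $xy\in E(\overline{G/e})$ iff $xy\in E(\overline{G})$; this matches the prescription for $H$, whose edges inside $W$ are precisely those of $\overline{G}-\{u,v\}$. For a pair $w,x$ with $x\in W$, the contraction rule gives $wx\in E(G/e)$ iff $x\in N_G(u)\cup N_G(v)$. Taking complements and applying De Morgan then yields $wx\in E(\overline{G/e})$ iff $x\notin N_G(u)$ and $x\notin N_G(v)$, i.e., iff $x\in N_{\overline{G}}(u)\cap N_{\overline{G}}(v)$, which is exactly the adjacency of $w$ in $H$.

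There is no substantive obstacle here; the only point that requires care is the simple-graph convention, namely that parallel edges created by the contraction (arising precisely at common neighbours of $u$ and $v$) are identified. Without this convention $G/e$ would have multi-edges and its complement would not be well defined. Combining the two adjacency checks above then gives $E(\overline{G/e})=E(H)$, completing the proof.
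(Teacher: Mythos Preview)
Your proof is correct; the paper omits the proof of this lemma as straightforward, and your direct verification by unfolding the definitions of contraction and complementation is exactly the routine argument the authors had in mind.
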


Cographs are also called complement-reducible graphs due to the following recursive-generation result \cite{corneil}. The operation of taking the complement of a graph is called {\bf complementation}.

\begin{lemma}
\label{recursion1}
A graph $G$ is a cograph if and only if $G$ can be generated from $K_1$ using complementation and $0$-sum.
\end{lemma}

Next, we show that, for $k \geq 2$, the class of $k$-cographs can be generated similarly.

\begin{lemma}
\label{recursion2}
For all positive integers $k$, a graph $G$ is a $k$-cograph if and only if $G$ can be generated from $K_1$ using complementation and the operation of $t$-sum for all $t$ with $0 \leq t<k$.
\end{lemma}

\begin{proof}
Let $G$ be a $k$-cograph. If $|V(G)| \leq 2$, the result holds. We proceed via induction on the number of vertices of $G$. Assume that the result holds for all $k$-cographs of order less than $|V(G)|$. Since $G$ is a $k$-cograph, $G$ or $\overline{G}$ is not $k$-connected. Without loss of generality, we may assume that $G$ is not $k$-connected. Therefore, for some $t<k$, we can write $G$ as a $t$-sum of two induced subgraphs $G_1$ and $G_2$ of $G$. By Lemma~ \ref{closed_induced_subgraph}, $G_1$ and $G_2$ are $k$-cographs and the result follows by induction. 

Conversely, let $G$ be a graph that can be generated from $K_1$ using complementation and $t$-sums. Since $K_1$ is a $k$-cograph, the result follows by Lemmas \ref{closed_induced_subgraph} and \ref{closed_direct_sum}.
\end{proof}

The following recursive-generation result for cographs is due to Royle \cite{royle}. It uses the concept of {\bf join} of two disjoint graphs $G$ and $H$, which is the graph $G \bigtriangledown H$ that is obtained from the union of $G$ and $H$ by joining every vertex of $G$ to every vertex of $H$.

\begin{lemma}
\label{royle_lemma}
Let $\mathcal{C}$ be the class of graphs defined as follows:

\begin{enumerate}[label=(\roman*)]
    \item $K_1$ is in $\mathcal{C}$;
    \item if $G$ and $H$ are in $\mathcal{C}$, then so is $G \oplus_0 H$; and
    \item if $G$ and $H$ are in $\mathcal{C}$, then so is $G \bigtriangledown H$.
\end{enumerate}

Then $\mathcal{C}$ is the class of cographs.
\end{lemma}

For graphs $G$ and $H$ such that $V(G) \cap V(H) = T$ and $G[T]=H[T]$, suppose that $|T|=t$. We generalize the join operation letting $G \bigtriangledown_t H$ be the graph that is  obtained from the union of $G$ and $H$ by joining every vertex of $V(G)-V(H)$ to every vertex of $V(H)-V(G)$. Note that $G \bigtriangledown_t H$ is the graph $\overline{\overline{G} \oplus_t \overline{H}}$.

 The next result generalizes Lemma~ \ref{royle_lemma} to $k$-cographs.

\begin{proposition}
\label{royle_lemma_generalize}
For $k \geq 1$, let $\mathcal{C}$ be the class of graphs defined as follows:

\begin{enumerate}[label=(\roman*)]
    \item $K_1$ is in $\mathcal{C}$;
    \item if $G$ and $H$ are in $\mathcal{C}$, then so is $G \oplus_t H$ for all $t$ with $0 \leq t < k$; and
    \item if $G$ and $H$ are in $\mathcal{C}$, then so is $G \bigtriangledown_t H$ for all $t$ with $0 \leq t < k$.
\end{enumerate}

Then $\mathcal{C}$ is the class of $k$-cographs.
\end{proposition}

\begin{proof}
Since $G \bigtriangledown_t H$ can be written in terms of $t$-sum and complementation, every graph in $\mathcal{C}$ is a $k$-cograph. Conversely, let $G$ be a $k$-cograph. If $|V(G)|=1$, then $G \in \mathcal{C}$. We proceed by induction on $|V(G)|$. Let $|V(G)|=n \geq 2$ and assume that $H \in \mathcal{C}$ when $H$ is a $k$-cograph with $|H| < n$. By Lemma~ \ref{recursion2}, $G$ or $\overline{G}$ is a $t$-sum of two smaller $k$-cographs. If $G$ is the graph that can be decomposed as a $t$-sum, then the result follows by induction. Therefore we may assume that $\overline{G}$ is $G_1 \oplus_t G_2$ for two smaller $k$-cographs $G_1$ and $G_2$. Observe that $G = \overline{G_1} \bigtriangledown_t \overline{G_2}$. By Lemma~ \ref{closed_induced_subgraph}, $\overline{G_1}$ and $\overline{G_2}$ are $k$-cographs and so are in $\mathcal{C}$ by induction. Therefore $G$ is in $\mathcal{C}$.
\end{proof}

Next we show that the class of $2$-cographs is closed under contractions.

\begin{lemma}
\label{closed_under_contraction}
Let $G$ be a $2$-cograph and $e$ be an edge of $G$. Then  $G/e$ is a $2$-cograph. 
\end{lemma}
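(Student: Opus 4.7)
The plan is to argue by contradiction: assume $G/e$ has an induced subgraph $H$ with $H$ and $\overline{H}$ both $2$-connected, and produce a corresponding bad induced subgraph of $G$, contradicting that $G$ is a $2$-cograph. Write $e = uv$, let $w$ be the vertex of $G/e$ arising from the contraction, and set $X = V(H) \setminus \{w\}$. If $w \notin V(H)$ then $H$ is already an induced subgraph of $G$ and we are done. Otherwise I would partition $X$ into four parts $A$, $B$, $C$, $D$ according to whether the vertex is green-adjacent in $G$ to both of $u, v$, to only $u$, to only $v$, or to neither. By Lemma~\ref{complementation_contraction_together}, $w$'s green neighbours in $H$ are $A \cup B \cup C$ and its red neighbours are $D$, so $2$-connectedness of $H$ and $\overline{H}$ forces $|A \cup B \cup C| \geq 2$ and $|D| \geq 2$.

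The main case is when both $u$ and $v$ have a green neighbour in $X$, so $A \cup B$ and $A \cup C$ are both nonempty. Here I would take $H' = G[X \cup \{u, v\}]$ and check that $H'$ and $\overline{H'}$ are each $2$-connected. For $H'$ itself: the identity $(H' - x)/uv = H - x$ for $x \in X$, combined with $H - x$ connected (from $H$ being $2$-connected) and $uv$ remaining an edge of $H' - x$, yields $H' - x$ connected; the cut tests at $u$ and $v$ reduce to connectedness of $G[X \cup \{v\}]$ and $G[X \cup \{u\}]$, which follow from $G[X] = H - w$ being connected together with the assumed green neighbour of $v$ (respectively $u$) in $X$. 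For $\overline{H'}$, given $x \in X$ and $y \in X \setminus \{x\}$, any red path $w = z_0 z_1 \cdots z_k = y$ in $\overline{H} - x$ must satisfy $z_1 \in D \setminus \{x\}$, and since $D \subseteq B \cup D$ and $D \subseteq C \cup D$, both $u$ and $v$ are red-adjacent to $z_1$ in $\overline{H'} - x$, allowing the path to lift. The remaining cut tests on $\overline{H'}$ use $|D| \geq 2$ and connectedness of $\overline{G}[X] = \overline{H} - w$. Thus $H'$ is the desired bad induced subgraph of $G$.

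Otherwise, by symmetry, $u$ has no green neighbour in $X$, so $A = B = \emptyset$ and $|C| \geq 2$. Then $H'' = G[X \cup \{v\}]$ is an induced subgraph of $G$ isomorphic to $H$ via the map that is the identity on $X$ and sends $v$ to $w$, because $v$'s green neighbourhood in $H''$ equals $A \cup C = C$, which matches $w$'s green neighbourhood in $H$. Hence $H''$ and $\overline{H''}$ are both $2$-connected, again contradicting that $G$ is a $2$-cograph.

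The main obstacle is the $\overline{H'}$ half of the main case, specifically showing $\overline{H'} - x$ is connected for $x \in X$. The lifting argument succeeds because of the inclusions $D \subseteq B \cup D$ and $D \subseteq C \cup D$, and the hypothesis $|D| \geq 2$ is used critically both to ensure the first edge of the lifted path survives the removal of $x$ and to place $u$ and $v$ into the same component of $\overline{H'} - x$ via a common neighbour in $D \setminus \{x\}$.
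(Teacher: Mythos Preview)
Your proof is correct and follows essentially the same route as the paper: argue by contradiction, reduce to the case $w\in V(H)$, and show that the induced subgraph $H'=G[(V(H)\setminus\{w\})\cup\{u,v\}]$ and its complement are both $2$-connected. Your ``other case'' (where $u$ has no green neighbour in $X$) is exactly the paper's exceptional cut-edge case, and your isomorphism $H''\cong H$ is the same observation the paper makes. The one notable difference is in verifying that $\overline{H'}$ is $2$-connected: you do a direct path-lifting check of $\overline{H'}-x$ for each $x$, whereas the paper observes more succinctly that $\overline{H'}-u$ contains a spanning subgraph isomorphic to $\overline{H}$ (via $v\mapsto w$, since $N_{\overline{H}}(w)=D\subseteq B\cup D=N_{\overline{H'}-u}(v)$), hence is already $2$-connected, and then appends $u$ using $\deg_{\overline{H'}}(u)\ge |D|\ge 2$. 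Both arguments work; the paper's is a bit shorter.
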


\begin{proof}
Assume to the contrary that $G/e$ is not a $2$-cograph. Then there is an induced subgraph $H$ of $G/e$ such that both $H$ and $\overline{H}$ are $2$-connected. Let $e = uv$ and let $w$ denote the vertex in $G/e$ obtained by identifying $u$ and $v$. We may assume that $w$ is a vertex of $H$, otherwise $H$ is an induced subgraph of $G$, a contradiction. We assert that the subgraph $H'$ of $G$ induced on the vertex set $(V(H) \cup \{u,v\}) - \{w\}$ is $2$-connected, as is its complement $\overline{H'}$. To see this, note that, since $H$ is $2$-connected, $H'$ is $2$-connected unless one of $u$ and $v$, say $u$, is a leaf of $H'$. In the exceptional case, we have $H' - u \cong H$, so $G$ has an induced subgraph for which both it and its complement are $2$-connected, a contradiction. We deduce that $H'$ is $2$-connected. 

By Lemma \ref{complementation_contraction_together}, the neighbours of $w$ in $\overline{H}$ are the common neighbours of $u$ and $v$ in $\overline{H'}$. Thus the degrees of $u$ and $v$ in $\overline{H'}$ each equal at least the degree of $w$ in $\overline{H}$. Moreover, $\overline{H'} - u$ has a spanning subgraph isomorphic to $\overline{H}$ and is therefore $2$-connected. Since $u$ has degree at least two in $\overline{H'}$, it follows that $\overline{H'}$ is $2$-connected, a contradiction.
\end{proof}

We show next that, for all $k \geq 3$, a contraction of a $k$-cograph need not be a $k$-cograph. We use the following construction for the proof. Start with a graph $G$ with vertex set $\{v_1, v_2, \ldots, v_n\}$ and a copy $G'$ of $G$ with vertex set $\{v_1',v_2', \ldots, v_n'\}$. Take the disjoint union of $G$ and $G'$, and add all the edges joining $v_i$ to $v_i'$. The resulting graph, $G \square K_2$, is the Cartesian product of $G$ and $K_2$. 

\begin{lemma}
\label{k_cographs_contract}

For $k \geq 3$, the class of $k$-cographs is not closed under contraction.
\end{lemma}

\begin{proof}
Let $G_2 = C_5$. For all $k \geq 3$, let $G_k = G_{k-1} \square K_2$. One can easily check that $G_k$ is a $k$-connected, $k$-regular graph whose complement is also $k$-connected.

Let $G_k'$ be a graph having an edge $e$ that is in no $3$-cycles such that $G_k'/e = G_k$ and the endpoints of $e$ each have degree less than $k$. Note that every proper induced subgraph of $G_k'$ has a vertex of degree less than $k$ and so $G_k'$ is a $k$-cograph. However, $G_k'/e$ is not a $k$-cograph as it equals $G_k$.
\end{proof}

By Lemmas \ref{closed_induced_subgraph} and \ref{closed_under_contraction}, the class of $2$-cographs is closed under taking induced minors. In the rest of the paper, we will focus our attention on $2$-cographs.
The next lemma is straightforward.

\begin{lemma}
\label{small_cographs}
All graphs having at most four vertices are $2$-cographs. 
\end{lemma}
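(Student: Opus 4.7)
The plan is to prove the contrapositive: no graph $H$ on at most four vertices has both $H$ and $\overline{H}$ being $2$-connected. Since any induced subgraph of a graph with at most four vertices itself has at most four vertices, this immediately implies that every such graph satisfies the definition of a $2$-cograph.

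Because $2$-connectivity requires at least three vertices, the cases $|V(H)| \leq 2$ are vacuous. For $|V(H)| = 3$, the only $2$-connected graph is $K_3$, whose complement is edgeless and hence not $2$-connected, so the three-vertex case is immediate.

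For $|V(H)| = 4$, I would enumerate the $2$-connected graphs on four vertices. Using the fact that every vertex of a $2$-connected graph has degree at least two, a short case analysis on the number of edges yields the complete list $C_4$, $K_4 - e$, and $K_4$. I would then compute their complements, namely $2K_2$, $K_2 \cup 2K_1$, and $\overline{K_4}$, and observe that each is disconnected, hence certainly not $2$-connected.

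I do not expect any real obstacle. The only step requiring care is being complete in the four-vertex enumeration, but since the candidate list is so short, this is immediate. The whole argument is just a finite check enabled by the fact that $2$-connectivity forces a minimum number of vertices and edges.
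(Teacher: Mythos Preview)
Your proposal is correct and matches the paper's treatment: the paper states the lemma as ``easy to check'' and omits the proof entirely, and the finite enumeration you describe is exactly the routine verification the authors have in mind.
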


Since we can compute the blocks of a graph in  polynomial time \cite[4.1.23.]{west}, the algorithm in Figure \ref{identify} recognizes $2$-cographs in polynomial time. Since $2$-cographs do not have induced subgraphs isomorphic to odd cycles of length at least five or their complements, it follows by the Strong Perfect Graph Theorem \cite{perfect} that all $2$-cographs are perfect. However, this inclusion is proper. For example, the graph $C_6^+$ obtained from a $6$-cycle by adding a chord to create two $4$-cycles is a perfect graph that is not a $2$-cograph.

\begin{figure}
\begin{algorithmic}[]
\REQUIRE Input a simple graph $G$
\STATE Set $H \leftarrow G$, BlocksList $\leftarrow [G]$

\IF{$|V(H)| \leq 4$}

\STATE remove $H$ from BlocksList

\IF{BlocksList is empty}

\STATE return $G$ is a $2$-cograph and exit the algorithm

\ELSE

\STATE update $H$ to be an element of BlocksList

\ENDIF
\ENDIF

\IF{some $K$ in $\{H,\overline{H}\}$ can be decomposed into $2$-connected blocks}

\STATE remove $H$ from BlocksList
\STATE add all the blocks of $K$ to BlocksList
\STATE update $H$ to be an element of BlocksList

\ELSE

\STATE return $G$ is not a $2$-cograph and exit the algorithm

\ENDIF

\end{algorithmic}
\caption{Algorithm for recognizing a $2$-cograph.}
\label{identify}
\end{figure}


Akiyama and Harary \cite[Corollary 1a]{harary} claimed that a $2$-connected graph $G$ has a $2$-connected complement if and only if the red and green degrees of every vertex of $G$ are at least two and $G$ has no spanning complete bipartite subgraph. However, this result is not true. The graphs in Figure~\ref{harary_figure} are complements of each other. The first graph in the figure satisfies the hypotheses of \cite[Corollary 1a]{harary} but its complement, $C_4 \oplus_1 C_4$, is not $2$-connected.

\begin{figure}[htbp]
\centering
\begin{minipage}{.30\linewidth}
  \includegraphics[scale=0.30]{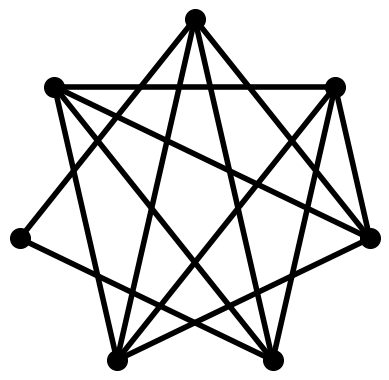}
\end{minipage}
\hspace{.05\linewidth}
\begin{minipage}{.30\linewidth}
  \includegraphics[scale=0.30]{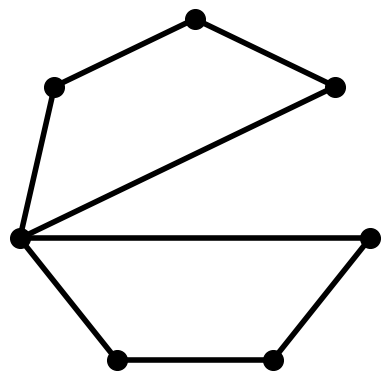}
\end{minipage}

\caption{Counterexample to a result of Akiyama and Harary.}
\label{harary_figure}

\end{figure}

We can repair Akiyama and Harary's result as follows.

\begin{proposition}
\label{corrected_harary}
If $G$ is a $2$-connected graph, then $\overline{G}$ is a $2$-connected graph if and only if $G$ has no complete bipartite subgraph using at least $|V(G)| - 1$ vertices.
\end{proposition}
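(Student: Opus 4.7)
The plan is to prove both directions by translating between vertex cuts of $\overline{G}$ and complete bipartite subgraphs of $G$ on many vertices, using the elementary observation that $\overline{G}$ has no edges between nonempty disjoint sets $A$ and $B$ if and only if $G$ contains $K_{|A|,|B|}$ with bipartition $(A,B)$.

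For the forward direction, I would suppose $G$ contains a complete bipartite subgraph with parts $A$ and $B$ (each nonempty) satisfying $|A| + |B| \geq n - 1$, where $n = |V(G)|$. Since every possible $A$-$B$ edge lies in $G$, no such edge lies in $\overline{G}$. If $|A| + |B| = n$, then $\overline{G}$ itself has no edges between the two nonempty sets $A$ and $B$, so $\overline{G}$ is disconnected. If $|A| + |B| = n - 1$, let $v$ be the remaining vertex; the same reasoning applied to $\overline{G} - v$ shows that $A$ and $B$ lie in distinct components of $\overline{G} - v$, so $v$ is a cut vertex of $\overline{G}$. In either case $\overline{G}$ is not $2$-connected.

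For the reverse direction, I would suppose $\overline{G}$ is not $2$-connected and recover a large complete bipartite subgraph of $G$. Because a graph fails to be $2$-connected precisely when it is disconnected or has a cut vertex, I would split into two cases. If $\overline{G}$ is disconnected, take $A$ to be the vertex set of one component of $\overline{G}$ and $B$ to be the rest of $V(G)$; both are nonempty, no $\overline{G}$-edge joins them, and hence $G$ contains $K_{|A|,|B|}$ on all $n$ vertices. If instead $\overline{G}$ has a cut vertex $v$, the same argument applied to $\overline{G} - v$ produces a bipartition $A \cup B = V(G) - \{v\}$ into nonempty sets with no $\overline{G}$-edges between them, exhibiting $K_{|A|,|B|}$ as a subgraph of $G$ on $n - 1$ vertices.

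There is no serious obstacle; the argument rests entirely on the elementary duality between separation in $\overline{G}$ and completeness of a bipartite cut in $G$. The only point requiring minor care is the convention that a complete bipartite subgraph has both parts nonempty, which holds automatically because the components produced by disconnection of $\overline{G}$ or of $\overline{G} - v$ are nonempty. The hypothesis that $G$ is $2$-connected is not used actively but fixes the regime in which both connectivity claims are meaningful.
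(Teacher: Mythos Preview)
Your proof is correct and follows essentially the same line as the paper's own proof, which simply notes that if $\overline{G}$ is not $2$-connected then $G$ has a spanning complete bipartite subgraph or one on $|V(G)|-1$ vertices, and declares the converse immediate. Your write-up is just a fleshed-out version of exactly this argument.
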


\begin{proof}
Note that if $\overline{G}$ is not $2$-connected, then $G$ has a spanning complete bipartite subgraph or a complete bipartite subgraph on $|V(G)| - 1$ vertices. The converse is immediate.
\end{proof}

\section{Induced-minor-minimal non-$2$-cographs}

We noted in Section 2 that $2$-cographs are closed under induced minors. In this section, we consider those non-$2$-cographs for which every proper induced minor is a $2$-cograph. We call these graphs {\bf induced-minor-minimal non-$2$-cographs}. The goal of this section is to characterize such graphs. We begin by showing that there are infinitely many of them. Theorem \ref{main_new_addition}, whose proof appears at the end of this section, specifies all of the infinite families of such graphs.

\begin{lemma}
\label{induced_minor_minimal_infinite}
Let $G$ be the complement of a cycle $C$ of length exceeding four. Then $G$ is an induced-minor-minimal non-$2$-cograph. 
\end{lemma}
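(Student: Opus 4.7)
The plan is to reduce to showing that, for every edge $e$ of $G=\overline{C}$, the graph $G/e$ is a $2$-cograph. Lemma~\ref{induced_subgraph_minimal_infinite} already says every proper induced subgraph of $G$ is a $2$-cograph. Since $2$-cographs are closed under induced minors (noted immediately after Lemma~\ref{closed_under_contraction}), once we know that $G-v$ and $G/e$ are $2$-cographs for every vertex $v$ and edge $e$, every proper induced minor of $G$ inherits the property. So essentially all of the work is the one-edge contraction.

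To handle $G/e$, I would use Lemma~\ref{complementation_contraction_together} to pass to the complement and study the simpler graph $\overline{G/e}$. Write $C=v_1v_2\cdots v_n v_1$ with $n\geq 5$, so $G=\overline{C}$, and let $e=uv$ with $u=v_i$, $v=v_j$. Since $uv$ is a green edge, $v_i$ and $v_j$ are non-adjacent in $C$; by Lemma~\ref{complementation_contraction_together}, $\overline{G/e}$ is obtained from $C-\{v_i,v_j\}$ by attaching a new vertex $w$ whose neighbours are the common neighbours of $v_i$ and $v_j$ in $C$.

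I would then split into two cases by the cyclic distance $d(v_i,v_j)$ in $C$. If $d(v_i,v_j)=2$, say the common neighbour is $v_k$, then $C-\{v_i,v_j\}$ consists of the isolated vertex $v_k$ together with a path on the remaining $n-3$ vertices, and $w$ is attached only to $v_k$; thus $\overline{G/e}$ is the disjoint union of the edge $wv_k$ and a path. If $d(v_i,v_j)\geq 3$, then $v_i$ and $v_j$ have no common neighbour, so $w$ is isolated and $C-\{v_i,v_j\}$ is a disjoint union of two paths; hence $\overline{G/e}$ is a disjoint union of two paths and an isolated vertex. In either case $\overline{G/e}$ is a disjoint union of paths (a path being obtainable from $K_1$ by iterated $1$-sums with $K_2$), so it is a $2$-cograph by Lemma~\ref{recursion2}. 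Applying Lemma~\ref{closed_induced_subgraph}(ii) then gives that $G/e$ is a $2$-cograph, as required.

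There is no real obstacle here beyond bookkeeping: the content is simply the observation, made transparent by Lemma~\ref{complementation_contraction_together}, that contracting a non-edge of a cycle either glues two consecutive arcs into a path via a common neighbour or leaves the arcs separate with a stray vertex. The only place one must be careful is the small case $n=5$, where only the distance-$2$ subcase occurs and one should check that the resulting $\overline{G/e}$ is still a disjoint union of paths (in fact $2K_2$).
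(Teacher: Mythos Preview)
Your proof is correct and follows essentially the same route as the paper: reduce via Lemma~\ref{induced_subgraph_minimal_infinite} to checking that each $G/e$ is a $2$-cograph, then use Lemma~\ref{complementation_contraction_together} to identify $\overline{G/e}$ as either a $0$-sum of a path and $K_2$ (your distance-$2$ case) or a $0$-sum of two paths and an isolated vertex (your distance-$\ge 3$ case), and conclude by Lemma~\ref{recursion2} and Lemma~\ref{closed_induced_subgraph}(ii). Your write-up is simply a more explicit unpacking of the same computation the paper summarizes in one sentence.
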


\begin{proof}

Certainly $G$ is not a $2$-cograph since both $G$ and its complement are $2$-connected. Moreover, by Lemma \ref{closed_induced_subgraph}, $G-v$ is a $2$-cograph for all vertices $v$ of $G$ because $\overline{G}-v$ is a path and is therefore a $2$-cograph. It remains to show that $G/e$ is a $2$-cograph for all edges $e$ of $G$. By Lemma~ \ref{complementation_contraction_together}, the complement of $G/e$ is either a $0$-sum of two paths and an isolated vertex, or a $0$-sum of a path and $K_2$. This implies that the complement of $G/e$ is a $2$-cograph and, by Lemma~ \ref{closed_induced_subgraph}, the result follows.
\end{proof}

Note that the complements of cycles of length at least five are not the only induced-minor-minimal non-$2$-cographs. It can be checked that both $C_6^+$ and its complement are induced-minor-minimal non-$2$-cographs.

The following lemma is obtained by applying \cite[Lemma 2.3]{ox1} (see also  \cite[Lemma 4.3.10]{ox2}) to the bond matroid of a $2$-connected graph.

\begin{lemma}
\label{contractible_edges_vertex}
Let $G$ be a $2$-connected graph other than $K_3$ and let $v$ be an arbitrary vertex of G. Then $G$ has at least two edges $e$ incident to $v$ such that $G/e$ is $2$-connected.
\end{lemma}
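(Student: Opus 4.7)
My plan is to give a direct graph-theoretic argument; the hint points to Oxley's Lemma~2.3 of \cite{ox1} applied to the bond matroid $M^*(G)$, using that the edges at a vertex of a $2$-connected graph form a circuit of $M^*(G)$ and that contraction in $G$ corresponds to deletion in $M^*(G)$. I will instead recast the statement as a block-tree argument that can be carried out in the language already introduced in this paper.

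The first step is to characterise when $G/e$ is $2$-connected. Writing $e=uv$ and letting $w$ denote the vertex of $G/e$ resulting from identifying $u$ and $v$, I will show that $G/e$ is $2$-connected if and only if $\{u,v\}$ is not a vertex cut of $G$, equivalently, $u$ is not a cut vertex of the connected graph $G-v$. One direction is immediate, since $G/e - w = G - \{u,v\}$. In the other direction, for any $w' \in V(G/e) - w$ one has $G/e - w' = (G-w')/e$ (here we use $w' \notin \{u,v\}$ so that $e$ survives in $G-w'$), and since contraction preserves connectedness, a cut vertex $w' \neq w$ of $G/e$ would also be a cut vertex of $G$, contradicting $2$-connectedness.

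With this equivalence, the task reduces to producing two neighbours of $v$ in $G$ that are non-cut-vertices of $G-v$. Since $G$ is $2$-connected and $G \neq K_3$, we have $|V(G)| \geq 4$, so $G-v$ is connected on at least three vertices. I split into two cases according to the block structure of $G-v$. If $G-v$ is itself $2$-connected, no vertex of $G-v$ is a cut vertex, and any two of the $\deg_G(v) \geq 2$ neighbours of $v$ work. Otherwise the block tree of $G-v$ has at least two leaves, so $G-v$ has at least two endblocks. For each endblock $B$ with attaching cut vertex $x$ of $G-v$, I claim that $v$ has a neighbour in $V(B) - \{x\}$: otherwise $\{x\}$ would separate $V(B) - \{x\}$ from the rest of $G$, violating $2$-connectedness of $G$. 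Any such neighbour is a non-cut-vertex of $G-v$, and the two distinct endblocks supply two distinct good neighbours.

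The main obstacle is the equivalence in the first step: one must verify carefully that, when $w' \neq u,v$ is a cut vertex of $G/e$, the graph $(G-w')/e$ is genuinely disconnected rather than artificially so after the parallel-edge cleanup that our convention for $G/e$ performs. This amounts to tracking which edges merge under contraction and is routine. Once the equivalence is in hand, the rest is standard block-tree manipulation.
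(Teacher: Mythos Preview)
Your argument is correct. The equivalence ``$G/e$ is $2$-connected if and only if $\{u,v\}$ is not a $2$-cut of $G$'' is exactly the right reduction, and the block-tree step---picking a neighbour of $v$ in each endblock of $G-v$ away from the attaching cut vertex---is the standard way to produce the two desired edges. Your caution about the parallel-edge cleanup is unnecessary: simplification never changes connectedness, so once $(G-w')/e$ and $G/e - w'$ coincide as multigraphs up to parallel classes, the connectivity conclusion is immediate.

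The paper takes a genuinely different route: it gives no argument at all, instead invoking Oxley's Lemma~2.3 from \cite{ox1} for connected matroids, applied to the bond matroid $M^*(G)$. There the edges incident to $v$ form a circuit of $M^*(G)$, and since $M^*(G)\setminus e \cong M^*(G/e)$, connectedness of the deleted matroid translates to $2$-connectedness of the contracted graph. That approach is a one-line citation but imports matroid machinery and an external reference; your block-tree proof is a few lines longer but keeps everything in the elementary graph-theoretic language already set up in the paper, which is arguably a better fit here.
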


An edge $e$ of a $2$-connected graph $G$ is {\bf contractible} if $G/e$ is $2$-connected. The following observation is immediate.

\begin{lemma}
\label{induced_minor_minimal_$2$-connected}
Let $G$ be an induced-minor-minimal non-$2$-cograph. Then both $G$ and $\overline{G}$ are $2$-connected.
\end{lemma}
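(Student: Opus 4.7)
My plan is to argue by contradiction, handling the case that $G$ is not $2$-connected directly, and then deducing the $\overline{G}$ case from the symmetry that induced subgraphs of $G$ and $\overline{G}$ on a common vertex set are mutual complements. Since $G$ is a non-$2$-cograph, I first fix an induced subgraph $H$ of $G$ such that both $H$ and $\overline{H}$ are $2$-connected; in particular $H$ is connected.

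Assuming $G$ is not $2$-connected, I split into two subcases. If $G$ is disconnected, write $G = G_1 \oplus G_2$ with $V(G_1), V(G_2)$ both nonempty; since $H$ is connected, $V(H) \subseteq V(G_i)$ for some $i$, hence $V(H) \subsetneq V(G)$. If instead $G$ is connected but has a cut vertex $v$, write $G = G_1 \oplus_1 G_2$ at $v$; if $V(H)$ met both $V(G_1)\setminus\{v\}$ and $V(G_2)\setminus\{v\}$, then either $v \notin V(H)$ and $H$ is disconnected, or $v \in V(H)$ and $v$ is a cut vertex of $H$, each contradicting $2$-connectedness of $H$. So $V(H) \subseteq V(G_i)$ for some $i$, and since the other side of the $1$-sum contains a vertex distinct from $v$, this again gives $V(H) \subsetneq V(G)$. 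In either subcase, $H$ is a proper induced subgraph of $G$, hence a proper induced minor, and so $H$ is a $2$-cograph by the minimality hypothesis. But taking $H$ itself as an induced subgraph of $H$, with $H$ and $\overline{H}$ both $2$-connected, shows that $H$ is not a $2$-cograph, a contradiction.

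The case that $\overline{G}$ is not $2$-connected is handled by running exactly the same argument with $\overline{G}$ and $\overline{H}$ in place of $G$ and $H$: one obtains that $\overline{H}$ is a proper induced subgraph of $\overline{G}$, which, since induced subgraphs of $\overline{G}$ on a vertex set $S$ are precisely complements of induced subgraphs of $G$ on $S$, is equivalent to $H$ being a proper induced subgraph of $G$. Thus $H$ is a proper induced minor of $G$, hence a $2$-cograph by minimality, contradicting the existence of $H$ as before.

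I do not expect significant obstacles: the core observation is simply that a $2$-connected subgraph cannot straddle a $0$- or $1$-separation of its ambient graph, after which minimality immediately closes the argument. The only point requiring a little care is making the complementation symmetry precise when treating $\overline{G}$, which is dispatched by the bijection between induced subgraphs of $G$ and of $\overline{G}$ on a common vertex set.
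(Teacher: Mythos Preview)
Your argument is correct. The paper itself omits the proof of this lemma as elementary, so there is nothing to compare against directly.

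That said, you are working harder than necessary. Once you have fixed an induced subgraph $H$ of $G$ with both $H$ and $\overline{H}$ $2$-connected, you can finish in one stroke: if $H \neq G$ then $H$ is a proper induced subgraph, hence a proper induced minor, of $G$, so $H$ is a $2$-cograph by minimality; but $H$ itself witnesses that $H$ is not a $2$-cograph, a contradiction. Thus $H = G$, and both $G$ and $\overline{G}$ are $2$-connected simultaneously. Your $0$-sum/$1$-sum decomposition merely re-proves that a $2$-connected induced subgraph cannot equal a non-$2$-connected $G$, and the separate treatment of $\overline{G}$ (with the translation back to induced subgraphs of $G$) is then superfluous.
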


In the rest of the section, we use the next two theorems of Chan about contractible edges in $2$-connected graphs \cite[Theorems 3.1, 3.3, and 3.5]{chan}. A component of a graph is {\bf trivial} if it has just one vertex. In a $2$-connected graph, a {\bf $2$-cut} is a $2$-element vertex cut.

\begin{theorem}
\label{chan_first_result}
Let $G$ be a $2$-connected graph that is not isomorphic to $K_3$. Suppose all the contractible edges of $G$ meet a $3$-element subset $S$ of $V(G)$. Then either $G-S$ has no edges, or $G-S$ has exactly one non-trivial component and this component has at most three vertices.
\end{theorem}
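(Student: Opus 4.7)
The plan is to convert the hypothesis into a statement about $2$-vertex-cuts. In a $2$-connected graph $G$ on at least four vertices, an edge $xy$ is contractible if and only if $\{x,y\}$ is not a $2$-cut of $G$: indeed, if $G/e$ is not $2$-connected it has a cut vertex, and since $G-w$ is connected for every $w \notin \{x,y\}$, the cut vertex of $G/e$ must be the vertex arising from the contraction, which is equivalent to $\{x,y\}$ separating $G$. Granting this translation, the hypothesis becomes: for every edge $uv$ of $G$ with $u,v \in V(G) \setminus S$, the pair $\{u,v\}$ is a $2$-cut of $G$. The small-case exception where $G \cong K_3$ is already excluded.

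Suppose that $G - S$ has an edge and let $H$ be a non-trivial component of $G - S$. Pick adjacent $u,v \in V(H)$; then $\{u,v\}$ is a $2$-cut, and write $C_1, \ldots, C_k$ for the components of $G - \{u,v\}$, where $k \geq 2$. Two-connectivity forces each of $u$ and $v$ to have a neighbour in every $C_i$. Any edge from $u$ or $v$ to a vertex of $C_i \setminus S$ lies in $G - S$ and so produces another $2$-cut of the form $\{u,w\}$ or $\{v,w\}$. In particular, if some $C_i$ avoids $S$ entirely, then a neighbour $w$ of $u$ in $C_i$ yields a $2$-cut $\{u,w\}$, and iterating this inside $C_i$ (whose edges are all in $G-S$) quickly forces $C_i$ to be a single vertex. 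Combining these observations, each $C_i$ must contain a vertex of $S$, which bounds $k \le 3$; and inside the unique $C_i$ that can contain additional vertices of $H$, the same argument controls its size, yielding $|V(H)| \le 3$.

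For the assertion that $H$ is the only non-trivial component, suppose for contradiction that $H_1$ and $H_2$ are both non-trivial. Choose edges $u_1 v_1 \in E(H_1)$ and $u_2 v_2 \in E(H_2)$, giving disjoint $2$-cuts $\{u_1,v_1\}$ and $\{u_2,v_2\}$ in $V(G)\setminus S$. Examine the components of $G - \{u_1,v_1\}$: the component containing $u_2$ and $v_2$ must also contain all of $H_2$ (by connectedness of $H_2$), whereas any component of $G - \{u_1,v_1\}$ disjoint from $S$ would be collapsed to a single vertex by the preceding paragraph's argument, yet must still be reached by $u_1$ and $v_1$. Keeping track of which of the three vertices of $S$ lies in which component of each of the two partitions, a short case analysis shows that the two $2$-cuts cannot coexist.

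The main obstacle will be the bookkeeping in the middle paragraph: the case where $k = 2$ and $S$ splits as $2+1$ or $3+0$ across the two components of $G - \{u,v\}$ requires care, because one must both prevent a component from being too big and rule out a long path or $4$-cycle inside $H$ that would demand four distinct $2$-cuts to be ``absorbed'' by only three vertices of $S$. The uniform tool for closing each case is the iterated generation of new $2$-cuts $\{u,w\}$ from edges of $H$ and from edges between $H$ and a would-be non-trivial $S$-free part of $G - \{u,v\}$.
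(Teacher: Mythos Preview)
The paper does not prove this theorem; it is quoted from Chan \cite{chan} and used as a black box, so there is no ``paper's own proof'' to compare against. What you have written is therefore not a competing proof but an attempt to supply one where the paper chose to cite.

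As a plan, your opening translation is sound: in a $2$-connected graph on at least four vertices, an edge $xy$ is contractible if and only if $\{x,y\}$ is not a $2$-cut, so the hypothesis becomes ``every edge of $G-S$ is a $2$-cut of $G$''. Your minimality/iteration idea also correctly shows that for any edge $uv$ of $G-S$, every component of $G-\{u,v\}$ meets $S$ (the iteration does not merely shrink the $S$-free component to a single vertex---a single $S$-free vertex $w$ already gives a contradiction, since $G-\{u,w\}$ is then connected through $v$). That cleanly yields $k\le 3$.

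Where the proposal stops being a proof is precisely at the two conclusions you need. First, the phrase ``the unique $C_i$ that can contain additional vertices of $H$'' is unjustified: nothing you have argued prevents $H-\{u,v\}$ from meeting several of the $C_i$, since $u$ and $v$ can have $H$-neighbours in different components. Second, both the bound $|V(H)|\le 3$ and the uniqueness of the non-trivial component are deferred to an unspecified ``short case analysis'', and you yourself flag the $k=2$ bookkeeping as the main obstacle without attempting it. That obstacle is the heart of Chan's argument; the iterated generation of new $2$-cuts is the right tool, but turning it into the precise bound requires tracking how the three $S$-vertices distribute across successive $2$-cuts, and that work is simply absent here. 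As written this is a reasonable outline, not a proof.
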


\begin{theorem}
\label{chan_second_result}
Let $G$ be a $2$-connected graph that is not isomorphic to $K_3$. Suppose all the contractible edges of $G$ meet a subset $S$ of $V(G)$ such that $|S| \geq 4$. Then $G-S$ has at most $|S|-2$ non-trivial components and, between them, these components have at most $2|S|-4$ vertices.
\end{theorem}

We will also frequently use the following straightforward result.

\begin{lemma}
\label{complement_connected_observation}
Let $G$ be a $2$-connected graph. If $G$ has a $2$-cut $\{g_1, g_2\}$ such that each of $g_1$ and $g_2$ has red degree at least two and the components of $G- \{g_1, g_2\}$ can be partitioned into two sets each of which contains at least two vertices, then the red graph $\overline{G}$ is $2$-connected.
\end{lemma}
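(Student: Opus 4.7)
\noindent\emph{Proof plan.}
The plan is to show $\overline{G}$ is $2$-connected by verifying that $\overline{G}-v$ is connected for every $v\in V(G)$. The structural backbone is the observation that $A$ and $B$ are unions of distinct components of $G-\{g_1,g_2\}$, so there are no green edges between $A$ and $B$; hence every pair $(a,b)\in A\times B$ is a red edge, and $\overline{G}[A\cup B]$ contains a spanning $K_{|A|,|B|}$. Since $|A|,|B|\geq 2$, this complete bipartite subgraph is itself $2$-connected, so $A\cup B$ induces a $2$-connected subgraph of $\overline{G}$.

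First suppose $v\in\{g_1,g_2\}$; by symmetry take $v=g_1$. The spanning bipartite subgraph on $A\cup B$ is untouched by the removal of $g_1$, so $A\cup B$ is connected in $\overline{G}-g_1$. Because $g_2$ has red degree at least two in $\overline{G}$, deleting the single vertex $g_1$ destroys at most one of its red neighbours, leaving $g_2$ with at least one red neighbour in $A\cup B$. This neighbour attaches $g_2$ to the bipartite piece, and $\overline{G}-g_1$ is connected.

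Next suppose $v\in A\cup B$; by symmetry take $v\in A$. Since $|A|\geq 2$, the set $A\setminus\{v\}$ is non-empty, so the red edges between $A\setminus\{v\}$ and $B$ still form a connected spanning bipartite subgraph on $(A\setminus\{v\})\cup B$. It remains to attach $g_1$ and $g_2$ in $\overline{G}-v$. Each $g_i$ has red degree at least two in $\overline{G}$, so deleting $v$ leaves it with at least one surviving red neighbour; I would argue that this neighbour either lies in $(A\setminus\{v\})\cup B$ (attaching $g_i$ directly to the bipartite component) or is the other vertex $g_{3-i}$, in which case the red edge $g_1g_2$ ferries $g_i$ across as soon as one of $g_1,g_2$ is attached.

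The main obstacle is the final sub-case in Case~2: ruling out the configuration in which, after removing $v$, both $g_1$ and $g_2$ retain only each other as a red neighbour, thereby isolating $\{g_1,g_2\}$ from $(A\setminus\{v\})\cup B$. To exclude this one must exploit both the $2$-connectedness of $G$ and the partition hypothesis $|A|,|B|\geq 2$ (so $v$ does not exhaust a side and there are plenty of candidate red neighbours on the other side) to force at least one of $g_1,g_2$ to keep a red neighbour in $(A\setminus\{v\})\cup B$. Once this is done, the bipartite red component on $(A\setminus\{v\})\cup B$ absorbs both $g_1$ and $g_2$ and $\overline{G}-v$ is connected, completing the proof that $\overline{G}$ is $2$-connected.
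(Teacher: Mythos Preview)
The paper omits the proof of this lemma, calling it a ``straightforward result.'' Your approach is the natural one: the absence of green $A$--$B$ edges gives a complete bipartite red subgraph on $A\cup B$, which is itself $2$-connected since $|A|,|B|\ge 2$; then one attaches $g_1$ and $g_2$. Your Case~1 is handled correctly.

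However, the obstacle you flag in Case~2 is not merely a loose end in your write-up; under the hypotheses as literally stated it cannot be removed, because the lemma is false. Take $V(G)=\{g_1,g_2,v,a_1,a_2,b_1,b_2\}$ with green edge set
\[
\{g_ia_j,\ g_ib_j : i,j\in\{1,2\}\}\ \cup\ \{va_1,\,va_2,\,a_1a_2,\,b_1b_2\}.
\]
Then $G$ is $2$-connected, $\{g_1,g_2\}$ is a $2$-cut with sides $A=\{v,a_1,a_2\}$ and $B=\{b_1,b_2\}$, and each $g_i$ has red degree exactly two, its red neighbours being $v$ and $g_{3-i}$. But $v$ is a cut vertex of $\overline{G}$: in $\overline{G}-v$ the pair $\{g_1,g_2\}$ is a component consisting of a single red edge, separated from $\{a_1,a_2,b_1,b_2\}$. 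So ``exploit the $2$-connectedness of $G$ and $|A|,|B|\ge 2$'' cannot rescue this case.

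The failure occurs only when $g_1g_2$ is red and $g_1,g_2$ have the same unique red neighbour in $A\cup B$. If the hypothesis is read as ``each $g_i$ has at least two red neighbours in $A\cup B$'' (equivalently, red degree at least two after discounting a possible red edge $g_1g_2$), then your plan goes through verbatim: after deleting any $v\in A\cup B$, each $g_i$ retains a red neighbour in $(A\cup B)\setminus\{v\}$ and attaches directly to the bipartite core. Your argument is thus essentially a correct proof of the corrected statement; what is missing is a hypothesis, not an idea.
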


\begin{lemma}
\label{path_three_basic}
Let $G$ be an induced-minor-minimal non-$2$-cograph such that $|V(G)| \geq 6$ and let $wxyz$ be a path $P$ of $G$ such that both $x$ and $y$ have degree two in $G$. Then $w$ and $z$ are adjacent.
\end{lemma}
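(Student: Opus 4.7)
The plan is a proof by contradiction. Suppose $a$ and $d$ are non-adjacent in $G$; write $n = |V(G)|$ and $U = V(G) \setminus \{a, b, c, d\}$, so that $|U| = n - 4 \geq 2$. By Lemma~\ref{induced_minor_minimal_$2$-connected}, both $G$ and $\overline{G}$ are $2$-connected. Since $|V(G)| \geq 6$ rules out $G = K_3$, Lemma~\ref{contractible_edges_vertex} applied at the degree-two vertex $b$ forces both of the (only two) edges at $b$ to be contractible; in particular $G/bc$ is $2$-connected. As $G/bc$ is a proper induced minor of $G$ it is a $2$-cograph, so taking $H = G/bc$ in the definition of $2$-cograph shows that $\overline{G/bc}$ is not $2$-connected. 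Proposition~\ref{corrected_harary} then supplies a complete bipartite subgraph $K_{p,q}$ in $G/bc$ (with $p, q \geq 1$) covering at least $n - 2$ of its $n - 1$ vertices; call its sides $A$ and $B$.

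The heart of the argument is to show, by a case analysis on where the contraction vertex $w$ (satisfying $N_{G/bc}(w) = \{a, d\}$, since $a, d$ are non-adjacent in $G$) sits in the bipartition, that $a$ and $d$ must share a common neighbour $u^* \in U$ in $G$. I expect this case analysis to be the main obstacle. If $w \in A$ (swap $A, B$ if needed), then $B \subseteq \{a, d\}$. The subcase $B = \{a, d\}$ gives $|A| \geq n - 4$ together with $A \subseteq \{w\} \cup (N_G(a) \cap N_G(d))$, forcing $|N_G(a) \cap N_G(d)| \geq n - 5 \geq 1$. The subcase $B = \{a\}$ gives $\deg_G(a) = |N_{G/bc}(a)| \geq n - 3$, and since $N_G(a) \subseteq \{b\} \cup U$ bounds $\deg_G(a)$ above by $n - 3$, equality holds, so $a$ is adjacent to every vertex of $U$; any $U$-neighbour of $d$ (which exists because $\deg_G(d) \geq 2$ and $d$'s neighbours lie in $\{c\} \cup U$) then serves as $u^*$. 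The subcase $B = \{d\}$ is symmetric. If $w \notin A \cup B$ instead, then $A \cup B$ exhausts $\{a, d\} \cup U$; the non-adjacency of $a$ and $d$ in $G/bc$ forces them onto the same side of the bipartition, and the opposite (nonempty) side is contained in $N_G(a) \cap N_G(d) \cap U$, yielding $u^*$ directly.

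Once $u^* \in N_G(a) \cap N_G(d) \cap U$ is in hand, inspect $G[\{a, b, c, d, u^*\}]$. The degree-two conditions on $b$ and $c$, together with the assumed non-adjacency of $a$ and $d$, rule out every pair except $ab$, $bc$, $cd$, $au^*$, and $du^*$, so this induced subgraph is the $5$-cycle $a\text{-}b\text{-}c\text{-}d\text{-}u^*\text{-}a$. Since $\overline{C_5} \cong C_5$ is also $2$-connected, $C_5$ is not a $2$-cograph. But $|V(G)| \geq 6$ makes $G[\{a, b, c, d, u^*\}]$ a proper induced subgraph of $G$, hence a proper induced minor, contradicting the induced-minor-minimality of $G$. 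The assumption that $a$ and $d$ are non-adjacent is therefore untenable, so $a$ and $d$ must be adjacent.
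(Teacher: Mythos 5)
Your proof is correct, but it takes a genuinely different route from the paper's. The paper's argument is a two-line application of $2$-connectivity: since $G$ is $2$-connected (Lemma~\ref{induced_minor_minimal_$2$-connected}), there is an $a$--$d$ path $P'$ internally disjoint from $P=abcd$, and contracting $P'$ to a single edge yields $C_5$ as a proper induced minor of $G$, contradicting minimality since $C_5$ is not a $2$-cograph. You instead contract the edge $bc$ --- contractible by Lemma~\ref{contractible_edges_vertex}, as both edges at the degree-two vertex $b$ must be --- note that the proper induced minor $G/bc$ is a $2$-connected $2$-cograph, so $\overline{G/bc}$ is not $2$-connected, and then invoke Proposition~\ref{corrected_harary} to extract a complete bipartite subgraph of $G/bc$ covering all but at most one vertex. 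Your case analysis on the position of the contraction vertex $w$ (whose neighbourhood in $G/bc$ is exactly $\{a,d\}$) is exhaustive and each subcase checks out: in particular, the degree count $\deg_G(a)=\deg_{G/bc}(a)\leq n-3$ in the subcase $B=\{a\}$ is right, and $n\geq 6$ is used correctly both to get $|A\setminus\{w\}|\geq n-5\geq 1$ when $B=\{a,d\}$ and to make the final $C_5$ proper. What your route buys is a slightly stronger conclusion --- the connecting path between $a$ and $d$ can always be taken of length two, so $C_5$ arises as an induced \emph{subgraph} rather than merely an induced minor --- but at the cost of heavier machinery (the contractible-edge lemma and the corrected Akiyama--Harary proposition), whereas the paper's Menger-type argument is shorter and needs only the $2$-connectivity of $G$ itself.
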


\begin{proof}
Assume that $w$ and $z$ are not adjacent. By Lemma~ \ref{induced_minor_minimal_$2$-connected}, $G$ is $2$-connected, so there is a path $P'$ joining $w$ and $z$ such that $P$ and $P'$ are internally disjoint. This implies that $G$ has $C_5$ as a proper induced minor. As $C_5$ is not a $2$-cograph, this is a contradiction.
\end{proof}

\begin{lemma}
\label{path_three}
Let $G$ be an induced-minor-minimal non-$2$-cograph. If $G$ has two adjacent vertices of degree two, then $|V(G)| \leq 10$.
\end{lemma}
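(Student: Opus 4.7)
The plan is a proof by contradiction. Suppose $n := |V(G)| \geq 11$. Let the two adjacent degree-two vertices be $b$ and $c$ with $N_G(b) = \{a,c\}$ and $N_G(c) = \{b,d\}$. Since $n \geq 6$, Lemma~\ref{path_three_basic} gives $ad \in E(G)$, so $\{a,b,c,d\}$ induces a $4$-cycle in $G$. Write $X = V(G) \setminus \{a,b,c,d\}$, so $|X| \geq 7$. By Lemma~3.4, both $G$ and $\overline{G}$ are $2$-connected, and hence every vertex has red- and green-degree at least $2$; in particular, in $\overline{G}$ each of $b$ and $c$ is red-adjacent to every vertex of $X$.

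The central tool is the proper induced minor $H := G/bc$, with new vertex $w$. In $H$ the vertex $w$ has green-neighbors $\{a,d\}$ and $\{a,d,w\}$ is an induced triangle. By Lemma~\ref{complementation_contraction_together}, $\overline{H} = (\overline{G} - \{b,c\}) + w$, where $N_{\overline{H}}(w) = N_{\overline{G}}(b) \cap N_{\overline{G}}(c) = X$, so $w$ has red-degree $|X| \geq 7$ in $\overline{H}$. Since $H$ is a proper induced minor, it is a $2$-cograph, so at least one of $H$, $\overline{H}$ fails to be $2$-connected. I first dispense with the easy failure modes. Because $ad$ is a green edge not involving $b$ or $c$, the edge $ad$ survives in $G - \{b,c\}$; moreover, since every vertex of $X$ has all its edges to $\{b,c\}$ blocked (as $b,c$ have no green neighbors in $X$) it must reach the rest of $G$ through $\{a,d\} \cup X$, so $G - \{b,c\}$ is connected, which forces $H - w$ to be connected and rules out $w$ as a cut vertex of $H$. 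In $\overline{H}$, if some $u \in \{a,d\}$ were isolated (the only way $\overline{H}$ could be disconnected, since $w$ dominates $X$), then $N_{\overline{G}}(u) \setminus \{b,c\} = \emptyset$; since $u$ is green-adjacent to one of $b,c$, we would get $|N_{\overline{G}}(u)| \leq 1$, contradicting $2$-connectivity of $\overline{G}$. The same counting rules out $a$ or $d$ as a cut vertex of $\overline{H}$, and a parallel calculation handles the remaining cases of cut vertices of $H$.

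The remaining failure modes reduce to two main cases. (I) $w$ is a cut vertex of $\overline{H}$: equivalently, $\{b,c\}$ is a $2$-cut of $\overline{G}$, which partitions $\{a,d\} \cup X$ into two red-components with $a$ and $d$ on opposite sides (because $a$'s red neighbors lie in $\{c\} \cup X$ and $d$'s lie in $\{b\} \cup X$). Here I use Lemma~\ref{complement_connected_observation} to preclude a symmetric cut in $G$ and then apply Chan's Theorem~\ref{chan_second_result} with $S = \{a,b,c,d\}$ to bound the size of $G - S$ via the contractible edges incident to $S$, eventually forcing $|X| \leq 6$. (II) Some $v \in X$ is a cut vertex of $\overline{H}$ (or of $H$, by a parallel argument): the same isolation calculation as above forces one of $a,d$, say $a$, to have $N_{\overline{G}}(a) = \{c,v\}$, so $a$ has green-degree exactly $n-3$. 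Iterating a contraction at a green edge of $a$ and invoking induced-minor-minimality on the resulting graph then forces $n \leq 10$.

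I expect the principal obstacle to be Case (I). The $2$-cut $\{b,c\}$ of $\overline{G}$ can a priori split $X$ very unevenly between the two red-components, and sharpening to the tight bound $|X| \leq 6$ will require careful bookkeeping: tracking the green edges among $X$ (which correspond to the red non-edges across the cut), locating enough contractible edges of $G$ that meet $\{a,b,c,d\}$, and combining these with Chan's Theorem~\ref{chan_second_result} in a way that handles all admissible distributions of $X$ across the cut. Once this is done, every case yields $n \leq 10$, contradicting the assumption $n \geq 11$.
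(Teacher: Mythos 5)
Your setup is sound and your reduction is correct as far as it goes: $G-\{b,c\}$ is connected (in $G-b$ the vertex $c$ has degree one), so $H=G/bc$ is $2$-connected, and since $H$ is a proper induced minor and hence a $2$-cograph, $\overline{H}$ must fail to be $2$-connected; your counting correctly eliminates disconnection of $\overline{H}$ and cut vertices in $\{a,d\}$, leaving exactly your cases (I) and (II). The problem is that those two cases are the entire content of the lemma, and neither is proved. In case (I) you assert that the red $2$-cut $\{b,c\}$ of $\overline{G}$ separates $a$ from $d$, but your parenthetical justification does not establish this: nothing prevents $a$ and $d$ from lying in the same component of $\overline{G}-\{b,c\}$ with some portion of $X$ forming the other side. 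Moreover, your plan to invoke Theorem \ref{chan_second_result} with $S=\{a,b,c,d\}$ requires first showing that \emph{every} contractible edge of $G$ meets $S$ --- i.e., that for any contractible edge $e$ with both endpoints in $X$, both $G/e$ and $\overline{G/e}$ are $2$-connected; this is precisely the delicate step (it needs control over how the red cut structure survives the contraction, via Lemma \ref{complementation_contraction_together} and Lemma \ref{complement_connected_observation}), and you do not carry it out --- indeed you concede that "careful bookkeeping" for "all admissible distributions of $X$ across the cut" remains to be done. In case (II) your isolation computation correctly forces $N_{\overline{G}}(a)=\{c,v\}$, but the conclusion "iterating a contraction at a green edge of $a$ and invoking induced-minor-minimality then forces $n\leq 10$" is a gesture, not an argument: you never exhibit the contraction, verify $2$-connectivity of the resulting graph and its complement, or derive the bound.

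For comparison, the paper organizes the same dichotomy differently: rather than contracting $bc$, it cases directly on the red degrees of the two outer neighbours ($a$ and $d$ in your labelling): both at least three, both exactly two, or mixed. The first case is dispatched by Lemma \ref{contractible_edges_vertex} (a contractible green edge at a red neighbour of $d$ avoiding the other outer vertex yields $\overline{G/e}$ $2$-connected). The remaining cases --- which correspond to your hard cases (I) and (II) --- are settled by explicit covering arguments: one shows that suitable shortest paths (e.g., the sublemma that no $vw$-path is internally disjoint from the green path $vdcw$, and shortest $vw$-paths through each of the two outer vertices) together with $\{a,b,c,d\}$ must cover all of $V(G)$, whence $|V(G)|\geq 11$ supplies a long path whose interior contains an edge $e$ with both $G/e$ and $\overline{G/e}$ $2$-connected, a contradiction. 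Those shortest-path constructions are exactly the machinery your sketch is missing; without them (or a worked-out substitute handling arbitrary splits of $X$ across the red cut), the proposal does not establish the bound.
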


\begin{proof}
Assume $|V(G)| \geq 11$. Let $a$ and $b$ be two vertices of $G$ of degree two such that $ab$ is a green edge. Let $c$ be the green neighbour of $a$ distinct from $b$, and let $d$ be the green neighbour of $b$ distinct from $a$. Then $c \neq d$, otherwise $G$ is not $2$-connected, contradicting Lemma~ \ref{induced_minor_minimal_$2$-connected}. By Lemma~ $\ref{path_three_basic}$, $cd$ is a green edge. Observe that every vertex of $V(G) - \{a,b,c,d\}$ has red edges joining it to each of $a$ and $b$. Thus $\overline{G} -\{c,d\}$ is $2$-connected.

Suppose that both $c$ and $d$ have red degree at least three. Let $w$ be a red neighbour of $d$ such that $w \neq a$. It follows by Lemma~ $\ref{contractible_edges_vertex}$ that $w$ has a contractible green edge incident to it, say $e$, such that the other endpoint of $e$ is not $c$. Then $\overline{G/e}$ is $2$-connected, a contradiction.

Next suppose that both $c$ and $d$ have red degree two. First, we assume that $c$ and $d$ have the same red neighbour, say $v$, in $G - \{a,b\}$. Since $v$ has green degree at least two, we have two green neighbours of $v$, say $x$ and $y$. Note that $x$ and $y$ are in $V(G) - \{a,b,c,d\}$. Since $x$ and $y$ are adjacent to both $c$ and $d$ in the green graph, both the red and the green graphs induced on $\{a,b,c,d,v,x,y\}$ are $2$-connected. This implies $|V(G)| \leq 7$, a contradiction. We may now assume that $c$ and $d$ have distinct red neighbours in $G - \{a,b\}$; call them $v$ and $w$, respectively. Note that $vdcw$ is a green $vw$-path.

\begin{sublemma}
\label{disjoint_paths}
$G - \{a,b\}$ has no $vw$-path $P$ internally disjoint from the path $vdcw$.
\end{sublemma}

Assume that $G-\{a,b\}$ has such a path. Observe that the red graph and the green graph induced on the vertex set $V(P) \cup \{a,b,c,d\}$ are $2$-connected and therefore, $V(G) = V(P) \cup \{a,b,c,d\}$. Now $|V(P)| \geq 7$ since $|V(G)| \geq 11$. Let $e$ be an edge in the path $P$ such that neither of the endpoints of $e$ is in $\{v,w\}$. Note that $G/e$ and $\overline{G/e}$ are both $2$-connected, a contradiction. Thus \ref{disjoint_paths} holds.

Let $P_1$ and $P_2$ be shortest $vw$-paths in $G-\{a,b,d\}$ and $G-\{a,b,c\}$ respectively. By \ref{disjoint_paths}, $P_1$ contains the vertex $c$ and $P_2$ contains $d$. Note that $V(G) = V(P_1) \cup V(P_2) \cup \{a,b\}$. As $|V(G)| \geq 11$, we may assume that $P_1 - w$ has length at least three. Let $e$ be an edge in $P_1 - w$ such that the endpoints of $e$ are not in $\{c,v\}$. Note that $G/e$ and $\overline{G/e}$ are both $2$-connected, a contradiction.

Finally, without loss of generality, we may assume that $c$ has red degree two and $d$ has red degree at least three. Let $v$ be the red neighbour of $c$ distinct from $b$. Suppose that $dv$ is red. Let $x$ and $y$ be two green neighbours of $v$ and let $P$ be a shortest path from $d$ to $\{v,x,y\}$ in $G-\{a,b,c\}$. Then, for $V' = \{a,b,c,d,v,x,y\} \cup V(P)$, the red and green graphs induced by $V'$ are $2$-connected, so $V' = V(G)$. As $|V(G)| \geq 11$, we may assume that $P$ has length at least three. Let $e$ be an edge in $P$ such that the endpoints of $e$ are not in $\{d,v,x,y\}$. Note that $G/e$ and $\overline{G/e}$ are both $2$-connected, a contradiction. Therefore, $dv$ is green. Let $w$ be a red neighbour of $d$ in $G-\{a,b\}$. Let $u$ be a green neighbour of $v$ distinct from $d$. Observe that $u \neq w$, otherwise $|V(G)| \leq 6$ since both $G[\{a,b,c,d,v,w\}]$ and $\overline{G}[\{a,b,c,d,v,w\}]$ are $2$-connected. Let $P$ be a shortest path from $w$ to $\{d,u,v\}$ in $G -\{a,b,c\}$. Then $V(G) = \{a,b,c,d,u,v,w\} \cup V(P)$, so we may assume that $P$ has length at least three. Then, for an edge $e$ of $P$ having neither endpoint in $\{d,u,v,w\}$, both $G/e$ and $\overline{G/e}$ are $2$-connected, a contradiction.
\end{proof}

The next lemma shows that if a path of an induced-minor-minimal non-$2$-cograph $G$ has three consecutive vertices of degree two, then $G \cong C_5$.

\begin{lemma}
\label{path_C_5}
Let $G$ be an induced-minor-minimal non-$2$-cograph such that $G$ has a path $P$ of length exceeding three and all the internal vertices of $P$ are of degree two, then $G \cong C_5$.
\end{lemma}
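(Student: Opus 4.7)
The plan is to combine Lemma~\ref{path_three_basic} with the degree-two constraint on the interior of $P$ to pin down the global structure of $G$. Write $P = v_0 v_1 \cdots v_k$ with $k \geq 4$, so that the internal vertices $v_1, v_2, \ldots, v_{k-1}$ all have green degree exactly two in $G$. In particular, the four vertices $v_1, v_2, v_3, v_4$ are distinct (since $P$ is simple and $k \geq 4$) and both of the middle two, $v_2$ and $v_3$, have degree two.

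The first step would be to rule out $|V(G)| \geq 6$. Under this size assumption, Lemma~\ref{path_three_basic} applies to the length-three subpath $v_1 v_2 v_3 v_4$ and forces $v_1 v_4$ to be a green edge. However, the degree-two condition at $v_1$ forces its only green neighbours to be $v_0$ and $v_2$, so $v_4 \in \{v_0, v_2\}$. This contradicts the distinctness of the vertices of $P$, so $|V(G)| \leq 5$. Since $G$ must be $2$-connected by Lemma~\ref{induced_minor_minimal_$2$-connected} and has at least $k+1 \geq 5$ vertices, this already pins $|V(G)|$ down to $5$.

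Next I would handle the case $|V(G)|=5$. Then $k+1 \leq 5$ forces $k = 4$, so $P$ spans all of $V(G)$. Because each of $v_1, v_2, v_3$ has green degree two, every green edge incident to one of them is already an edge of $P$. Consequently the only possible non-path green edge is $v_0 v_4$. Using Lemma~\ref{induced_minor_minimal_$2$-connected} again, $G$ must be $2$-connected; but without the edge $v_0 v_4$, removing $v_2$ would disconnect $\{v_0,v_1\}$ from $\{v_3,v_4\}$. Hence $v_0 v_4$ is an edge and $G \cong C_5$.

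The only real subtlety I anticipate is verifying that the hypotheses of Lemma~\ref{path_three_basic} are genuinely in force when I invoke it on $v_1 v_2 v_3 v_4$: distinctness of these four vertices is automatic from $P$ being a simple path of length at least four, and the required size bound $|V(G)| \geq 6$ is precisely the case being dispatched, so the application is clean and the argument should go through without additional case analysis.
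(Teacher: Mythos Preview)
Your argument is correct but follows a different route from the paper's. The paper proves Lemma~\ref{path_C_5} directly: it takes a subpath $P_{uv}$ of $P$ of length four, finds a second internally disjoint $uv$-path $P'$ by $2$-connectedness, and observes that contracting $P'$ down to a single edge (and deleting any leftover vertices) produces $C_5$ as an induced minor of $G$. Minimality then forces $P'$ to have length one and $V(P_{uv}) = V(G)$, giving $G \cong C_5$ in one stroke.

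Your approach instead recycles Lemma~\ref{path_three_basic}: applying it to $v_1v_2v_3v_4$ under the assumption $|V(G)| \ge 6$ forces the edge $v_1v_4$, which immediately contradicts $\deg(v_1)=2$. You then finish the five-vertex case by hand. This is a perfectly clean alternative; it trades the explicit induced-minor construction for a dependency on Lemma~\ref{path_three_basic} (whose proof is itself the same $C_5$-as-induced-minor idea). The paper's version is self-contained and uniform, while yours is shorter given the available machinery and avoids redoing the induced-minor step. Both are equally rigorous.
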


\begin{proof}
Let $u$ and $v$ be vertices of $P$ such that the subpath $P_{uv}$ of $P$ joining $u$ and $v$ has length four. Since $G$ is $2$-connected, there is a $uv$-path $P'$ such that $P_{uv}$ and $P'$ are internally disjoint. Assume that $P'$ is a shortest such path. Then contracting all but one edge in $P'$ and deleting all the vertices not in $V(P_{uv})$, we obtain $C_5$. Since $G$ cannot have $C_5$ as a proper induced minor, $G \cong C_5$.
\end{proof}

A $2$-connected graph $H$ is {\bf critically $2$-connected} if $H -v$ is not $2$-connected for all vertices $v$ of $H$.

\begin{lemma}
\label{two_cases_induced_minor_minimal}
If $G$ is a non-$2$-cograph such that $G-v$ is a $2$-cograph for all vertices $v$ of $G$, then $G$ or $\overline{G}$ is critically $2$-connected, or both $G$ and $\overline{G}$ have vertex connectivity two.
\end{lemma}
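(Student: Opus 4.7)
The plan is to argue by contradiction, splitting on the vertex connectivities of $G$ and $\overline{G}$. By Lemma~\ref{induced_minor_minimal_$2$-connected} both graphs are $2$-connected, so $\kappa(G)\ge 2$ and $\kappa(\overline{G})\ge 2$. If $\kappa(G)=2=\kappa(\overline{G})$ we are in the third case of the conclusion, so by symmetry it suffices to show that if $\kappa(G)\ge 3$ then $\overline{G}$ is critically $2$-connected.

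So I would suppose $\kappa(G)\ge 3$ and that $\overline{G}$ is \emph{not} critically $2$-connected; the task is to derive a contradiction. From the assumption there is a vertex $v\in V(\overline{G})$ with $\overline{G}-v$ being $2$-connected. On the other side, because $G$ is $3$-connected, $G-v$ is also $2$-connected. Since $\overline{G-v}=\overline{G}-v$, the induced subgraph $G-v$ of itself witnesses that $G-v$ is not a $2$-cograph, as both $G-v$ and its complement are $2$-connected.

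Now I invoke the hypothesis that $G\in\mathcal{G}$: every proper induced minor of $G$ is a $2$-cograph. In particular, the proper induced subgraph $G-v$ is a $2$-cograph, contradicting the conclusion of the previous paragraph. This contradiction forces $\overline{G}$ to be critically $2$-connected whenever $\kappa(G)\ge 3$; by the symmetric argument (exchanging the roles of $G$ and $\overline{G}$, which is legitimate because $\mathcal{G}$ is closed under complementation by definition), $\kappa(\overline{G})\ge 3$ forces $G$ to be critically $2$-connected. Combining with the case $\kappa(G)=\kappa(\overline{G})=2$ exhausts all possibilities and gives the trichotomy stated in the lemma.

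I do not expect a serious obstacle here: the only subtle point is to notice that ``$G-v$ is not a $2$-cograph'' follows trivially from $G-v$ and $\overline{G-v}$ both being $2$-connected, by taking $H=G-v$ itself in the definition of a $2$-cograph. Everything else is a direct application of Lemma~\ref{induced_minor_minimal_$2$-connected} and the defining property of $\mathcal{G}$.
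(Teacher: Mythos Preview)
Your proposal is correct and uses essentially the same idea as the paper's proof: the key fact is that for $G\in\mathcal{G}$, no vertex $v$ can have both $G-v$ and $\overline{G}-v$ $2$-connected (otherwise the proper induced minor $G-v$ would fail to be a $2$-cograph). The paper states this observation up front and then argues that if neither $G$ nor $\overline{G}$ is critically $2$-connected, witnesses $v$ and $v_c$ exist forcing $\kappa(G)=\kappa(\overline{G})=2$; you instead split on $\kappa(G)$ and derive the contradiction directly when $\kappa(G)\ge 3$ and $\overline{G}$ is not critically $2$-connected. The logical content is the same, only the packaging differs.
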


\begin{proof}
Certainly, $G$ and $\overline{G}$ are $2$-connected and, for all vertices $v$ of $G$, either $G-v$ or $\overline{G} - v$ is not $2$-connected. Observe that if neither $G$ nor $\overline{G}$ is critically $2$-connected, then $G$ has vertices $v$ and $v_c$ such that $G-v$ and $\overline{G} - v_c$ are $2$-connected. It follows that $G-v_c$ and $\overline{G}-v$ are not $2$-connected so both $G$ and $\overline{G}$ have vertex connectivity two.
\end{proof}

Next we find those induced-minor-minimal non-$2$-cographs $G$ such that $G$ or $\overline{G}$ is critically $2$-connected. We will use the following result of Nebesky \cite{nebesky}.

\begin{lemma}
\label{critical_2_connected}
Let $G$ be a critically $2$-connected graph such that $|V(G)| \geq 6$. Then $G$ has at least two distinct paths of length exceeding two such that the internal vertices of these paths have degree two in $G$.
\end{lemma}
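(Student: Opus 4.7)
The plan is to exploit two classical facts about critically $2$-connected graphs: such graphs contain many vertices of degree two, and those vertices must organise themselves into long \emph{suspended paths}, where a suspended path of $G$ is a maximal path all of whose internal vertices have degree two in $G$. A suspended path of length at least three is precisely one of the objects demanded by the lemma, so the target becomes: at least two distinct long suspended paths exist.

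First, I would invoke (or rederive) the Chartrand--Kaugars--Lick bound, which says that a critically $2$-connected graph on $n \geq 4$ vertices has at least $\lceil (n+4)/3 \rceil$ vertices of degree two. With $n \geq 6$ this gives at least four degree-two vertices. Each such vertex is an interior vertex of a unique suspended path whose endpoints have degree at least three (unless $G$ is a cycle, in which case the conclusion is trivial by breaking the cycle into two arcs of length at least three). Thus the four or more degree-two vertices distribute themselves among some collection of suspended paths, and if at least two of those paths have length at least three, the proof is complete.

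The residual case, and the main obstacle, is to rule out the possibility that at most one suspended path has length exceeding two. In that situation, all but at most two of the degree-two vertices are isolated interior vertices of suspended paths of length exactly two; each such vertex $b$ is the middle vertex of a subpath $a\,b\,a'$ where $a$ and $a'$ both have degree at least three in $G$. For each such $b$, criticality of $G$ at $a$ supplies a vertex $c$ for which $\{a,c\}$ is a $2$-cut separating $b$ from part of the rest of $G$. I would then analyse the end-block structure of $G-a$ and $G-a'$ simultaneously, arguing that if two isolated degree-two vertices $b_1, b_2$ share a common high-degree neighbour then the associated $2$-cuts constrain $G$ to have fewer than six vertices, while if they do not share neighbours then there are simply too few high-degree vertices in $G$ to accommodate them without producing a second long suspended path.

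The delicate bookkeeping in this last step is the real work; it is handled in Nebesk\'y's original paper by induction on $n$ together with a careful examination of how the end-blocks of the various $G-v$ are glued together, and that is the route I would follow to complete the proof.
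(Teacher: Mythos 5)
The first thing to note is that the paper contains no proof of this lemma to compare against: it is quoted verbatim as a known theorem of Nebesk\'y \cite{nebesky}, and the paper simply cites it. Judged as a standalone proof, your proposal has a genuine gap, and it sits exactly where you place the ``delicate bookkeeping'': the entire content of the lemma is the residual case in which at most one suspended path is long, and at that point you write that the argument ``is handled in Nebesk\'y's original paper'' and that you would follow his route. That is a citation, not a proof. Everything preceding it is routine reduction --- degree-two vertices lie on suspended paths, and two long suspended paths would finish the job --- while the real assertion, that the degree-two vertices of a critical block on at least six vertices cannot be (all but two of them) pairwise non-adjacent middle vertices of length-two suspended paths, is never actually argued. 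Your sketch of this case (the $2$-cuts $\{a,c\}$ supplied by criticality, the end-block structure of $G-a$ and $G-a'$) names plausible tools but executes no step of the analysis, so nothing checkable would survive an attempt to write it out; in effect your proof of the lemma is the same citation the paper itself uses.

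There is also a problem with the quantitative input you invoke to get started. Chartrand--Kaugars--Lick proves that a critically $k$-connected graph has \emph{a} vertex of degree at most $\lfloor 3k/2 \rfloor - 1$ (for $k=2$, a single vertex of degree two); it gives no count. The bound of at least $\lceil (n+4)/3 \rceil$ vertices of degree two is Dirac's theorem for \emph{minimally} $2$-connected graphs, i.e.\ graphs in which every \emph{edge} is essential, and that is a different class: the graph $C_6^+$ of this paper (a $6$-cycle plus a chord) is critically $2$-connected but not minimally $2$-connected, since deleting the chord leaves the $2$-connected graph $C_6$. So the counting theorem you cite does not apply to the hypothesis at hand. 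Worse, a guaranteed supply of four degree-two vertices in critical blocks on at least six vertices is essentially a corollary of the very lemma being proved (two paths of length exceeding two contribute at least two internal degree-two vertices each), so importing such a count as an ingredient courts circularity unless it is established independently. To make the proposal into a proof you would need to either reconstruct Nebesk\'y's induction in full or find a genuinely new argument for the residual case; as written, the proposal is an accurate roadmap to the difficulty rather than a resolution of it.
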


\begin{proposition}
\label{case_1}
Let $G$ be an induced-minor-minimal non-$2$-cograph such that $G$ is critically $2$-connected. Then $G$ is isomorphic to $C_5$ or $C_6^+$.
\end{proposition}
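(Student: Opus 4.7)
The plan is to split on the order of $G$. If $|V(G)| \le 5$, then by Lemma~\ref{small_cographs} every graph on at most four vertices is a $2$-cograph, so we must have $|V(G)| = 5$. A direct enumeration of the $5$-vertex graphs that are $2$-connected together with their complements shows that only $C_5$ qualifies (and indeed $C_5 \in \mathcal{G}$), so $G \cong C_5$.

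For $|V(G)| \ge 6$, I would apply Nebesky's result (Lemma~\ref{critical_2_connected}) to obtain two distinct paths $P_1$ and $P_2$ of length exceeding two, all of whose internal vertices have degree two in $G$. If either $P_i$ has length at least four, Lemma~\ref{path_C_5} forces $G \cong C_5$, contradicting $|V(G)| \ge 6$. Hence each $P_i$ has length exactly three, say $P_i = a_ib_ic_id_i$ with $b_i$ and $c_i$ of degree two. In particular $G$ has adjacent vertices of degree two, so Lemma~\ref{path_three} gives $|V(G)| \le 10$. Moreover, by Lemma~\ref{path_three_basic}, each $a_id_i$ is an edge, so each $P_i$ together with $a_id_i$ forms a $4$-cycle in which two consecutive vertices have degree two in the whole graph $G$.

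I would then run a case analysis on the intersection $\{a_1,d_1\} \cap \{a_2,d_2\}$, noting that the degree-$2$ vertices $b_i,c_i$ must be distinct from all other named vertices. The clean sub-case is $\{a_1,d_1\} = \{a_2,d_2\}$: writing $a = a_1 = a_2$ and $d = d_1 = d_2$, and using that $b_i,c_i$ have degree two in $G$ (so they carry no further edges), the set $\{a,d,b_1,c_1,b_2,c_2\}$ induces exactly $C_6^+$ in $G$. Since $C_6^+$ is itself a non-$2$-cograph (both it and its complement are $2$-connected), and $G$ is induced-minor-minimal, this induced copy of $C_6^+$ cannot be a proper subgraph of $G$; hence $|V(G)| = 6$ and $G \cong C_6^+$.

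The main obstacle is ruling out the two remaining intersection patterns, $|\{a_1,d_1\} \cap \{a_2,d_2\}| \in \{0,1\}$. In these cases I would exploit that $b_i,c_i$ have degree two in $G$, so that every edge leaving a $4$-cycle emanates from $a_i$ or $d_i$, together with the $2$-connectivity of $G$, to force additional edges (for example, if the two cycles share the single vertex $a_1 = a_2$, then the absence of an edge $d_1d_2$ would make $a_1$ a cut-vertex). From each forced configuration I would then exhibit a proper induced minor of $G$ that is not a $2$-cograph, typically by contracting an appropriately chosen edge along one of the $4$-cycles and using Lemma~\ref{complementation_contraction_together} to keep track of the complement, or alternatively by invoking the $\overline{G}$ part of the hypothesis on membership in $\mathcal{G}$. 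Reducing all sub-cases to the shared-edge configuration, and combining with the argument of the previous paragraph, completes the proof.
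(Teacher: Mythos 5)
Your opening reduction (the $|V(G)|\le 5$ case, Nebesky's lemma, Lemma~\ref{path_C_5} to force both paths to have length exactly three, and Lemma~\ref{path_three_basic} to close each path into a $4$-cycle) matches the paper, and your handling of the sub-case $\{a_1,d_1\}=\{a_2,d_2\}$ is correct: the six vertices induce $C_6^+$, and minimality together with $C_6^+\in\mathcal{G}$ forces $G\cong C_6^+$. But the two remaining intersection patterns are a genuine gap: you explicitly defer them to a plan (``force additional edges, then exhibit some non-$2$-cograph proper induced minor'') that is never carried out, and the one concrete claim you do make is false. If the two cycles share only the vertex $a_1=a_2$, the absence of the edge $d_1d_2$ does \emph{not} make $a_1$ a cut-vertex, because $G$ may contain vertices outside the two cycles (Lemma~\ref{path_three} only bounds $|V(G)|$ by $10$, leaving room for up to three such vertices), and $d_1$ and $d_2$ can be joined through them. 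So no edge is forced, and your proposed contraction of an edge along a $4$-cycle, with Lemma~\ref{complementation_contraction_together} tracking the complement, has no identified target minor; as written the argument does not close.

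The missing idea, which is how the paper disposes of all intersection patterns at once, is that you do not need to force edges: use $2$-connectivity to connect the two $4$-cycles and contract. A path joining $\{a_1,d_1\}$ to $\{a_2,d_2\}$ (avoiding the shared vertex, if any) cannot pass through any $b_i$ or $c_i$, since those vertices have degree two with both neighbours on their own cycle, so they cannot be internal vertices of such a path. Contracting all edges of the connecting path(s) identifies endpoints of the two cycles, and after deleting the remaining vertices the six surviving vertices induce exactly $C_6^+$ (the degree-two hypothesis on $b_i,c_i$ rules out any stray edges). Thus in every case $G$ has $C_6^+$ as an induced minor; since $C_6^+\in\mathcal{G}$ it is not a $2$-cograph, so by induced-minor-minimality this copy cannot be proper, whence $G\cong C_6^+$ --- precisely the minimality argument you already deploy in the shared-edge case, applied uniformly. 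In the patterns with $|\{a_1,d_1\}\cap\{a_2,d_2\}|\le 1$ the minor is necessarily proper (a contraction or deletion genuinely occurs), so those cases are eliminated outright rather than reduced to the shared case.
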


\begin{proof}
By Lemmas \ref{small_cographs} and \ref{induced_minor_minimal_infinite}, it follows that $C_5$ is the unique induced-minor-minimal non-$2$-cograph with at most five vertices, so we may assume that $|V(G)| \geq 6$. Thus, by Lemma~ \ref{critical_2_connected}, $G$ has two distinct paths $P_1$ and $P_2$ of length exceeding two such that their internal vertices have degree two. Since $G$ is not isomorphic to $C_5$, by Lemma~ \ref{path_C_5}, we may assume that both $P_1$ and $P_2$ have length three. Lemma~ \ref{path_three_basic} implies that, for each $i$, the endpoints of $P_i$ are adjacent. We deduce that $G$ has $C_6^+$ as an induced minor. As $C_6^+$ is an induced-minor-minimal non-$2$-cograph, we deduce that $G \cong C_6^+$.
\end{proof}

For a graph $G$, let $V_g$ and $V_r$ be its set of vertices of green-degree two and its set of vertices of red-degree two.

\begin{lemma}
\label{edited_new_case1_james_work}
A graph $G$ is an induced-minor-minimal non-$2$-cograph for which the graph $G[V_r]$ induced on $V_r$ has at least two disjoint red edges if and only if $\overline{G}$ is a cycle with at least five vertices, or $\overline{G}$ is isomorphic to $H_1, H_2$, or $F_k$ for some $k\geq 0$ where $H_1, H_2$, and $F_k$ are shown in Figure $\ref{James_new}$.
\end{lemma}

\begin{proof}
 First we observe that if $\overline{G}$ is a cycle with $|V(\overline{G})| \geq 5$ or if $\overline{G}$ is isomorphic to $H_1, H_2,$ or $F_k$, then $G[V_r]$ has at least two disjoint red edges. Moreover, by Lemma \ref{induced_minor_minimal_infinite}, if $\overline{G}$ is a cycle with $|V(\overline{G})| \geq 5$, then $G$ is an induced-minor-minimal non-$2$-cograph. It is straightforward to check that if $\overline{G}$ is isomorphic to $H_1$ or $H_2$, then $G$ is an induced-minor-minimal non-$2$-cograph. Finally, we show that, for all $k \geq 0$, the complement of $F_k$ is an induced-minor-minimal non-$2$-cograph. Since $F_0 \cong C_6^+$ and the complement of the latter is an induced-minor-minimal non-$2$-cograph, we may assume that $k>0$. As both $F_k$ and $\overline{F_k}$ are $2$-connected, the graph $\overline{F_k}$ is not a $2$-cograph. We show that every proper induced minor $H$ of $\overline{F_k}$ is a $2$-cograph. First assume that $H$ is an induced subgraph of $\overline{F_k}$. Deleting the vertex $x$ from $F_k$ leaves a path, which is a $2$-cograph. Thus we may assume that $x$ is a vertex of $H$. Once a vertex distinct from $x$ is deleted from $F_k$, if we were to find a non-$2$-cograph, it must be contained in one of the blocks of the vertex deletion. Each block $B$ of a vertex deletion of $F_k$ that has at least three vertices must have $x$ as a vertex. Moreover, $B$ has $x$ adjacent to all but at most one other vertex, so its complement is not $2$-connected. It is now straightforward to see that $H$ is a $2$-cograph. For an edge $uv$ of $\overline{F_k}$, it follows by Lemma~ \ref{complementation_contraction_together} that the complement of $\overline{F_k}/uv$ is either an induced subgraph of $F_k$ or a $1$-sum of an induced subgraph of $F_k$ with $K_2$ or $K_3$. Thus $\overline{F_k}/uv$ is a $2$-cograph and so $\overline{F_k}$ is an induced-minor-minimal non-$2$-cograph.

Conversely, assume that $G$ is an induced-minor-minimal non-$2$-cograph for which $G[V_r]$ has $u_1u_2$ and $v_1v_2$ as two disjoint red edges. Since $C_5$ is the unique induced-minor-minimal non-$2$-cograph with five vertices, we may assume that $|V(\overline{G})| \geq 6$, that $\overline{G}$ is not a cycle, and that no $F_k$ for $k \geq 0$ is isomorphic to $\overline{G}$. Next we show the following.

\begin{sublemma}
\label{new_sublemma_edits1}
In $\overline{G}$, no $u_i$ is adjacent to any $v_j$.
\end{sublemma}

Note that if we have a red edge connecting $\{u_1,u_2\}$ to $\{v_1,v_2\}$, then $\overline{G}$ has a path $P$ of length three such that all the vertices of $P$ have red degree two. Let $Q$ be a shortest path in $\overline{G}\ba E(P)$ joining the endpoints of $P$. Then $\overline{G}$ has as an induced subgraph a cycle with edge set $E(P) \cup E(Q)$. This cycle has at least five edges, a contradiction. Thus \ref{new_sublemma_edits1} holds.

 In $\overline{G}$, let $x$ and $y$ be the neighbours of $u_1$ and $u_2$, respectively, other than $u_2$ and $u_1$; and let $w$ and $z$ be the neighbours of $v_1$ and $v_2$, respectively, other than $v_2$ and $v_1$. Because $\overline{G}$ is $2$-connected, it has a cycle $C$ containing $u_1u_2$ and $v_1v_2$. We show next that $C$ is Hamiltonian. Assume it is not. Certainly $\overline{G}[V(C)]$ is $2$-connected. Consider $G[V(C)]$. In it, $u_1$ and $u_2$ are adjacent to every vertex not in $\{x,u_1,u_2,y\}$, and $v_1$ and $v_2$ are adjacent to every vertex not in $\{w,v_1,v_2,z\}$. In addition, $u_1$ and $v_1$ are adjacent to $y$ and it follows by symmetry that $G[V(C)]$ is $2$-connected. The minimality of $G$ implies that $V(G) = V(C)$. Thus $C$ is indeed Hamiltonian.

 Assume that $C$ consists of the path $xu_1u_2y$, a path $P_{yz}$ from $y$ to $z$, the path $zv_2v_1w$, and a path $P_{wx}$ from $w$ to $x$. Now $x$ and $y$ must be distinct. Likewise, $w$ and $z$ are distinct. If $x=w$ and $y=z$, then $\overline{G}$ is either $C_6$ or $C_6^+$. As $C_6^+ = F_0$, this is a contradiction. Thus $x \neq w$ or $y \neq z$.

The graph $\overline{G}-\{u_1,u_2\}$ is connected. Take a shortest path $P$ in this graph from $x$ to $y$. This path $P$ must be a single edge otherwise $\overline{G}$ has an induced cycle of length at least five consisting of the union of $P$ and the path $xu_1u_2y$. By Lemma \ref{induced_minor_minimal_infinite}, the complement of this induced cycle is an induced-minor-minimal non-$2$-cograph, so $G$ is this complement, a contradiction.

By symmetry, we may assume that $\overline{G}$ has $xy$ and $wz$ as edges. Assume that $x=w$ but $y\neq z$. Because the only cycles of $\overline{G}$ containing $u_1u_2$ and $v_1v_2$ are Hamiltonian, the path $P_{yz}$ in $C$ is a shortest path from $y$ to $z$ in $\overline{G}-x$. Let $P_{yz} = y_0y_1 \dots y_k$ where $y=y_0$ and $z=y_k$. For each $i$ in $\{1,2,\ldots, k-1\}$, the only possible neighbour of $y_i$ in $\overline{G}$ other than $y_{i-1}$ and $y_{i+1}$ is $x$. We argue by induction on $i$ that $y_i$ is adjacent to $x$. Suppose $y_1$ is not adjacent to $x$. If $y_2$ is adjacent to $x$, then $\overline{G}$ has $C_6^+$ as an induced subgraph, a contradiction. Thus $y_2$ is not adjacent to $x$. As $y_k$ is adjacent to $x$, for some $j\geq 3$, the vertex $y_j$ is adjacent to $x$, but none of $y_{j-1}, y_{j-2}, \ldots, y_2, y_1$ is adjacent to $x$. Then $\overline{G}$ has a cycle of length at least five as an induced subgraph, a contradiction. We conclude that $y_1$ is adjacent to $x$. Assume that all of $y_1,y_2, \ldots, y_t$ are adjacent to $x$ but $y_{t+1}$ is not. If $y_{t+2}$ is not adjacent to $x$, then $\overline{G}$ contains an induced cycle of length at least five, a contradiction. Thus $y_{t+2}$ is adjacent to $x$ and $\overline{G}$ has $F_t$ as a proper induced subgraph, a contradiction. We conclude that $y_{t+1}$ is adjacent to $x$. Hence, by induction, $y_i$ is adjacent to $x$ for all $i$ in $\{1,2, \ldots, k-1\}$. Thus $\overline{G} \cong F_k$, a contradiction.

It remains to consider the case when $x \neq w$ and $y \neq z$. If $xz$ and $wy$ are both green, then $G-\{v_1,v_2\}$ and its complement are both $2$-connected, a contradiction. Suppose both $xz$ and $wy$ are red. Then $\overline{G}$ has a cycle using $u_1u_2$ and $v_1v_2$ and having exactly eight vertices. Thus $|V(\overline{G})| = 8$. If both $xw$ and $yz$ are green, then $\overline{G}-\{u_1,u_2\} \cong C_6^+$, a contradiction. Thus $\overline{G}$ is isomorphic to either $H_1$ or $H_2$. Now assume that $xz$ is red and $wy$ is green. If both $xw$ and $yz$ are red, then $|V(G)|=8$ and $\overline{G}$ is isomorphic to $H_2$. If $xw$ is green, then, using the paths $P_{yz}$ and $P_{wz}$ in $\overline{G}$, we see that $G-\{v_1,v_2\}$ and its complement are both $2$-connected. Thus we may assume that $xw$ is red. Likewise, $yz$ is red otherwise $G-\{u_1,u_2\}$ and its complement are both $2$-connected, a contradiction. Hence $\overline{G}$ is isomorphic to $H_2$. 
\end{proof}

The following is a straightforward consequence of Lemmas \ref{critical_2_connected} and \ref{edited_new_case1_james_work}.

\begin{proposition}
\label{new_case1_james_work}
A graph $G$ is an induced-minor-minimal non-$2$-cograph for which $\overline{G}$ is critically $2$-connected if and only if $\overline{G}$ is a cycle with at least five vertices, or $\overline{G}$ is isomorphic to $H_1, H_2$, or $F_k$ for some $k\geq 0$.
\end{proposition}

The next three lemmas show that the number of vertices of an induced-minor-minimal non $2$-cograph is bounded above given some conditions on the sizes of components after the removal of a green $2$-cut and on the red degrees of the vertices in that cut.

\begin{lemma}
\label{both_greater}
Let $\{g_1, g_2\}$ be a $2$-cut of an induced-minor-minimal non-$2$-cograph $G$ such that each of $g_1$ and $g_2$ has red degree exceeding two and the components of $G- \{g_1, g_2\}$ can be partitioned into two subgraphs, $A$ and $B$, each having at least two vertices. Then $|V(G)| \leq 8$.
\end{lemma}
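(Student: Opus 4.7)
My plan is to assume $|V(G)| \geq 9$ and contract an edge whose two endpoints both lie in $A$ or both lie in $B$, aiming to contradict the induced-minor-minimality of $G$. The main tool is Lemma~\ref{complement_connected_observation}. Suppose $e$ is a contractible edge of $G$ with both endpoints in $A$ and $|A| \geq 3$. Then $G/e$ is $2$-connected, and $\{g_1,g_2\}$ is still a $2$-cut of $G/e$ whose components partition into $A/e$ (of size $\geq 2$) and $B$ (of size $\geq 2$). Moreover, contracting a single edge reduces the red degree of $g_1$ and of $g_2$ by at most one, so these red degrees remain $\geq 2$ in $G/e$. By Lemma~\ref{complement_connected_observation}, $\overline{G/e}$ is $2$-connected, contradicting the hypothesis that every proper induced minor of $\overline{G}$ is a $2$-cograph. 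The analogous statement holds for edges inside $B$ when $|B| \geq 3$.

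Having ruled out such contractible edges, I would apply Lemma~\ref{contractible_edges_vertex}. Since $|V(G)| \geq 9$, we have $G \not\cong K_3$, so every $a \in A$ has at least two contractible edges of $G$ incident to it. Since $A$ has no neighbours in $B$, these edges lie in $A \cup \{g_1, g_2\}$. If $|A| \geq 3$, none can lie inside $A$ by the previous paragraph, so both must be $ag_1$ and $ag_2$, forcing $a$ to be green-adjacent to both $g_1$ and $g_2$. Symmetrically, when $|B| \geq 3$, every $b \in B$ is green-adjacent to both $g_1$ and $g_2$.

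I would then split on whether both $|A|, |B| \geq 3$ or not. If both are $\geq 3$, every vertex outside $\{g_1, g_2\}$ is a green neighbour of $g_1$, so $g_1$ has green degree $|V(G)| - 2 \geq 7$ and red degree at most $1$, contradicting the hypothesis that $g_1$ has red degree exceeding $2$. Hence one of $|A|, |B|$ equals $2$; without loss of generality, $|A| = 2$. Assume for contradiction that $|B| \geq 5$. Then the argument above applied to $B$ shows every $b \in B$ is green-adjacent to $g_1$ and $g_2$, so the red degree of $g_1$ is at most $1 + |A| = 3$. Equality must hold, forcing $g_1$ to be red-adjacent to $g_2$ and to both vertices of $A$; similarly for $g_2$. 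Then each $a \in A$ is red-adjacent to both $g_1$ and $g_2$, so its only possible green neighbour is the other vertex of $A$, giving $\deg_G(a) \leq 1$ and contradicting the $2$-connectedness of $G$. Therefore $|B| \leq 4$ and $|V(G)| \leq 2 + 2 + 4 = 8$.

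The main subtlety I anticipate is the red-degree bookkeeping under contraction: when an edge inside $A$ is contracted, two red neighbours of $g_1$ may be identified into one, so the red degree can drop by one. The hypothesis that $g_1$ and $g_2$ have red degree strictly exceeding $2$ is exactly the buffer needed to invoke Lemma~\ref{complement_connected_observation} on $G/e$; with only red degree $\geq 2$ in $G$, the argument would break down at this step.
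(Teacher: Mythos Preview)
Your argument is correct and uses the same two tools as the paper—Lemma~\ref{contractible_edges_vertex} and Lemma~\ref{complement_connected_observation}—but the case analysis is organized differently. The paper assumes $|V(G)|>8$, takes $|V(A)|\geq 4$ without loss of generality, and splits on whether $A$ contains a red neighbour of $g_1$ or $g_2$: if not, then every vertex of $A$ is green-adjacent to both $g_i$, so deleting any $v\in A$ leaves both $G-v$ and $\overline{G}-v$ $2$-connected; if $A$ contains a red neighbour $a_1$ of $g_1$, then (since $a_1g_1$ is not a green edge) one of the two contractible edges at $a_1$ must have its other end in $A$, and contracting it yields the contradiction via Lemma~\ref{complement_connected_observation}. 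Your version instead deduces that when there is no contractible edge inside a side of size $\geq 3$, every vertex on that side is green-adjacent to both $g_i$, and then closes the remaining cases by bounding the red degree of $g_1$; this works but requires the extra split on whether $|A|=2$ or both $|A|,|B|\geq 3$, which the paper's vertex-deletion step avoids.

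One small wording slip: when you contract $e\in E(A)$ and obtain that both $G/e$ and $\overline{G/e}$ are $2$-connected, the contradiction is to the induced-minor-minimality of $G$ (since $G/e$ is a proper induced minor of $G$), not of $\overline{G}$. This does not affect the correctness of the argument.
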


\begin{proof}

 Assume that $|V(G)| > 8$. Without loss of generality, let $|V(A)| \geq 4$. Suppose $A$ contains no red neighbours of $g_1$ or $g_2$. Then all vertices in $A$ are incident to both $g_1$ and $g_2$ via a green edge. Let $v$ be any vertex in $A$. Note that both $G-v$ and $\overline{G}-v$ are $2$-connected, a contradiction. Therefore, we may assume that $A$ has a red neighbour, say $a_1$, of $g_1$. Lemma~ \ref{contractible_edges_vertex} implies that we can find a contractible green edge, say $e$, of $G$ incident to $a_1$ such that the other endpoint of $e$ is in $A$. By Lemma~ \ref{complement_connected_observation}, $\overline{G/e}$ is $2$-connected, a contradiction. 
\end{proof}

\begin{lemma}
\label{restructure_lemma}
Let $\{g_1,g_2\}$ be a $2$-cut of an induced-minor-minimal non-$2$-cograph $G$ such that the red degree of $g_1$ is two and the components of $G-\{g_1,g_2\}$ can be partitioned into subgraphs $A$ and $B$ such that $|V(A)| \geq |V(B)| \geq 2$. Suppose that $A$ contains exactly one red neighbour $v$ of $g_1$, and either $g_2$ has no red neighbours in $A-v$, or $g_2$ has red degree greater than two. If all of the contractible edges of $G$ having both endpoints in $V(A) \cup \{g_1,g_2\}$ are incident to a vertex in $\{g_1,g_2,v\}$, then $|V(A)| \leq 4$.
\end{lemma}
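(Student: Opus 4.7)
Suppose for a contradiction that $|V(A)| \geq 5$, and set $W := V(A) \setminus \{v\}$, so $|W| \geq 4$. The overall strategy is to exhibit an edge $e$ of $G$ for which both $G/e$ and $\overline{G/e}$ are $2$-connected; since $G/e$ is then a proper induced minor of $G$ that is not a $2$-cograph, this contradicts $G \in \mathcal{G}$.

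First, I would package the $A$-side of the $2$-cut into a $2$-connected auxiliary graph $G_A^+$ by taking $G[V(A)\cup\{g_1,g_2\}]$ and adding the edge $g_1g_2$ if it is not already present. A routine $2$-sum argument, using that $\{g_1,g_2\}$ is a $2$-cut of the $2$-connected graph $G$ and that every component of $G-\{g_1,g_2\}$ attaches to both $g_1$ and $g_2$, shows that $G_A^+$ is $2$-connected and that for every edge $e$ of $G$ whose endpoints lie in $V(A)$, $G/e$ is $2$-connected if and only if $G_A^+/e$ is $2$-connected. Setting $S := \{g_1,g_2,v\}$, the hypothesis of the lemma combined with this equivalence (and the trivial observation that every edge of $G_A^+$ incident to $g_1$ or $g_2$ already meets $S$) implies that every contractible edge of $G_A^+$ meets $S$. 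Since $|V(G_A^+)| \geq 7$, Theorem~\ref{chan_first_result} now yields that $G_A^+ - S = G[W]$ either has no edges or has exactly one non-trivial component of at most three vertices; because $|W| \geq 4$, in either alternative there is a vertex $w \in W$ that is isolated in $G[W]$.

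Since $w$ is isolated in $G[W]$, its green neighbours all lie in $\{v,g_1,g_2\}$ and $wg_1$ is green (by the assumption that $v$ is the unique red neighbour of $g_1$ in $A$). Applying Lemma~\ref{contractible_edges_vertex} to $w$ yields at least two contractible edges at $w$, so at least one of the edges $wg_1, wg_2$ is contractible in $G$. Pick such a contractible edge $e = wx$ with $x \in \{g_1,g_2\}$; in $G/e$, the pair $\{x^*, g_j\}$ (where $x^*$ is the merged vertex and $g_j$ is the element of $\{g_1,g_2\}\setminus\{x\}$) is a $2$-cut that separates $A\setminus\{w\}$ (of size at least $4$) from $V(B)$ (of size at least $2$), satisfying the component-partition hypothesis of Lemma~\ref{complement_connected_observation}. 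Using Lemma~\ref{complementation_contraction_together} to identify the red neighbourhood of $x^*$ with the common red neighbourhood of $w$ and $x$ in $\overline{G}$, I would verify that both $x^*$ and $g_j$ retain red degree at least two, so that Lemma~\ref{complement_connected_observation} delivers the $2$-connectedness of $\overline{G/e}$, contradicting $G \in \mathcal{G}$.

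The main obstacle lies in verifying these red-degree bounds, since $g_1$ has red degree exactly two (its red neighbours being $v$ and some $z \in V(B)\cup\{g_2\}$), so a poor choice of the contracted edge can inadvertently collapse one of the merged red neighbourhoods to size less than two. I would handle this by splitting into subcases based on whether $z = g_2$ or $z \in V(B)$ and on which of $v$ and $g_2$ is a green neighbour of $w$; the flexibility afforded by the fact that at least two of $wv, wg_1, wg_2$ are contractible (Lemma~\ref{contractible_edges_vertex}), combined with the structural constraint on $G[W]$ from Theorem~\ref{chan_first_result}, then allows a suitable $e$ to be chosen in each subcase, with an auxiliary contraction inside $V(B)$ being needed in the subcase where $z = g_2$ and the natural choices at $w$ all fail.
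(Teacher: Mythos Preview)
Your setup matches the paper's: pass to the auxiliary $2$-connected graph $G_A^+$ on $V(A)\cup\{g_1,g_2\}$, show that all its contractible edges meet $S=\{g_1,g_2,v\}$, and invoke Theorem~\ref{chan_first_result}. From there, however, the two arguments diverge. The paper derives the contradiction by \emph{vertex deletion}: it splits into the two Chan outcomes (edgeless $G_A^+-S$, versus one small non-trivial component together with isolated vertices) and in each case exhibits a vertex $\gamma\in W$ with $G-\gamma$ and $\overline{G}-\gamma$ both $2$-connected. In the edgeless case this is done by building an explicit cycle through $g_1,g_2,v$ and one vertex of $\Gamma$, so that any other $\gamma\in\Gamma$ can be deleted; in the non-edgeless case the paper applies Lemma~\ref{both_greater} to the $2$-cut $\{\alpha,\beta\}$ coming from a non-contractible edge of the small component, forcing $v$ to have green neighbourhood $\{\alpha,\beta\}$, which pins the neighbourhood of every isolated vertex of $I_A$ to $\{g_1,g_2\}$ and makes the deletion go through.

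Your plan instead tries \emph{edge contraction} at the isolated $w$, and this is where the gap lies. In the subcase where the second red neighbour of $g_1$ is $g_2$ (so $g_1g_2$ is red) and $wv$ is red, contracting $wg_1$ gives the merged vertex red neighbourhood $N_{\overline G}(w)\cap\{v,g_2\}=\{v\}$, while contracting $wg_2$ makes $g_1g_2^*$ green (because $wg_1$ was green) and so drops $g_1$ to red degree one; neither contraction at $w$ allows Lemma~\ref{complement_connected_observation} to fire. Your proposed remedy, an auxiliary contraction inside $V(B)$, does not survive $|V(B)|=2$: if $B$ has no internal green edge there is nothing to contract, and if it has one, contracting it leaves a single vertex on the $B$-side, so the partition hypothesis of Lemma~\ref{complement_connected_observation} again fails. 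This subcase is therefore genuinely unhandled; closing it seems to require either the paper's structural vertex-deletion analysis (separating the edgeless and non-edgeless cases and using Lemma~\ref{both_greater} in the latter) or a substantially different argument that you have not outlined.
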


\begin{proof}
 Assume that $|V(A)| > 4$.  Let $G_A$ be the subgraph of $G$ induced by $V(A) \cup \{g_1,g_2\}$, and let $Q$ denote the vertex set $\{g_1,g_2,v\}$. By colouring the edge $g_1g_2$ green if necessary, we may assume that $G_A$ is $2$-connected. Since the contractible edges of $G_A$ must meet $Q$, by Theorem $\ref{chan_first_result}$, either $G_A - Q$ has no edges, or $G_A - Q$ has one non-trivial component and this component has at most three vertices. First suppose that $G_A-Q$ is edgeless. Let $\Gamma = V(G_A)-Q$. Next we show the following.

\begin{sublemma}
\label{exactly_one_has_red_degree_two_sub1}
There is no vertex $\gamma$ in $\Gamma$ such that $G_A - \gamma$ is $2$-connected.
\end{sublemma}

If such a vertex exists, then $G - \gamma$ is $2$-connected. Moreover, by Lemma~ \ref{complement_connected_observation}, $\overline{G} - \gamma$ is $2$-connected, a contradiction. Thus \ref{exactly_one_has_red_degree_two_sub1} holds.

\begin{sublemma}
\label{new_sublemma_edits2}
The edge $vg_2$ is red.
\end{sublemma}

Suppose $vg_2$ is green. Let $\alpha$ be a neighbour of $v$ in $\Gamma$. Then $g_1\alpha v g_2g_1$ is a cycle of $G_A$. Because $G_A - Q$ is edgeless and $G_A$ is $2$-connected, every vertex in $\Gamma - \alpha$ is adjacent to at least two members of $\{g_1, g_2, v\}$. Thus $G_A - \gamma$ is $2$-connected for all $\gamma$ in $\Gamma - \alpha$, a contradiction to \ref{exactly_one_has_red_degree_two_sub1}. Thus \ref{new_sublemma_edits2} holds.

Observe that $v$ and $g_2$ have a common neighbour $\beta$ in $\Gamma$ otherwise, as $G_A-Q$ is edgeless, $g_1$ is a cut vertex of $G_A$. By \ref{new_sublemma_edits2}, $v$ has a neighbour $\alpha$ in $\Gamma - \beta$. Since $g_1 \alpha v \beta g_2 g_1$ is a cycle and all vertices in $\Gamma - \{\alpha, \beta\}$ are adjacent to at least two vertices in  $\{g_1, g_2, v\}$, we deduce that $G_A - \gamma$ is $2$-connected for all $\gamma$ in $\Gamma - \{\alpha, \beta\}$, a contradiction.

We may now assume that $G_A - Q$ has one non-trivial component, say $C_A$, and a set $I_A$ of isolated vertices. Moreover, $|V(C_A)| \leq 3$. Then $I_A$ is non-empty since $|V(A)| > 4$. Let $\alpha \beta$ be an edge in $C_A$. Note that $\alpha \beta$ is not contractible in $G_A$,  so $\{\alpha, \beta\}$ is a 2-cut of $G_A$ and, therefore, of $G$. Since $|V(B)| \geq 2$ and $I_A$ is non-empty, each of $\alpha$ and $\beta$ has red degree at least three in $G$. Therefore, by Lemma~ \ref{both_greater}, as $|V(G)| = |V(A)| + 2 + |V(B)| > 8$, there is a vertex $t$ of $G$ whose only green neighbours are $\alpha$ and $\beta$. Since $g_1$ is adjacent to all vertices in $I_A \cup V(C_A)$, it follows that $t= v$. This implies that all vertices in $I_A$ are adjacent only to $g_1$ and $g_2$. Taking $w$ in $I_A$, we see that $G_A-w$ is $2$-connected, a contradiction to \ref{exactly_one_has_red_degree_two_sub1}
\end{proof}

\begin{lemma}
\label{exactly_one_has_red_degree_two}
Let $\{g_1,g_2\}$ be a $2$-cut of an induced-minor-minimal non-$2$-cograph $G$ such that the components of $G-\{g_1, g_2\}$ can be partitioned into subgraphs, $A$ and $B$, each having at least two vertices. If the red degree of $g_1$ is two and that of $g_2$ is greater than two such that one red neighbour of $g_1$ is in $A$ and the other is in $B$, then $|V(G)| \leq 10$.
\end{lemma}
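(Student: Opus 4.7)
The plan is to argue by contradiction: assume $|V(G)| \geq 11$ and apply Lemma~\ref{restructure_lemma} to the larger of $A$ and $B$. Let $v_A$ and $v_B$ denote the red neighbours of $g_1$ in $A$ and $B$, respectively, and assume without loss of generality that $|V(A)| \geq |V(B)|$. To invoke Lemma~\ref{restructure_lemma} and obtain $|V(A)| \leq 4$, it suffices (since $g_1, g_2 \notin V(A)$) to show that no contractible edge of $G$ has both endpoints in $V(A) - v_A$.

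Suppose, for contradiction, that $e = xy$ with $x, y \in V(A) - v_A$ is contractible. Since $G/e$ is a proper induced minor of $G \in \mathcal{G}$ and is $2$-connected by contractibility, $G/e$ must be a $2$-cograph, and so $\overline{G/e}$ cannot be $2$-connected. The plan is to contradict this by applying Lemma~\ref{complement_connected_observation} to $G/e$ with the cut $\{g_1, g_2\}$. Because $e$ lies entirely inside $A$, this set remains a $2$-cut of $G/e$, separating $A/e$ (of size $|V(A)| - 1 \geq 2$) from $B$ (of size $\geq 2$). For the red-degree requirements, Lemma~\ref{complementation_contraction_together} tells us that the contracted vertex $w$ inherits $N_{\overline{G}}(x) \cap N_{\overline{G}}(y)$ as its red neighbourhood. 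Since $g_1$'s only red neighbours in $G$ are $v_A$ and $v_B$, neither of which equals $x$ or $y$, the vertex $w$ is not red-adjacent to $g_1$; thus $g_1$ retains red degree exactly $2$ in $G/e$. For $g_2$, contraction loses at most two red neighbours $x, y$, but in that case $w$ is red-adjacent to $g_2$, so the red degree of $g_2$ in $G/e$ drops by at most one and remains $\geq 3 - 1 = 2$. All hypotheses of Lemma~\ref{complement_connected_observation} are then met, forcing $\overline{G/e}$ to be $2$-connected — the desired contradiction.

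Having verified the premise of Lemma~\ref{restructure_lemma}, we conclude $|V(A)| \leq 4$, hence $|V(B)| \leq |V(A)| \leq 4$, and therefore $|V(G)| = |V(A)| + |V(B)| + 2 \leq 10$. The main obstacle is the red-degree bookkeeping in $G/e$: the argument succeeds precisely because both red neighbours of $g_1$ lie outside the contracted edge, so the ``good'' $2$-cut structure required by Lemma~\ref{complement_connected_observation} is preserved after contraction.
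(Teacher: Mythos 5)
Your proof is correct and takes essentially the same route as the paper's: both arguments show that every contractible edge of $G$ with both endpoints in $V(A) \cup \{g_1,g_2\}$ must meet $\{g_1,g_2,v\}$ by applying Lemma~\ref{complement_connected_observation} to $G/e$ (whose contraction would otherwise make $\overline{G/e}$, and hence $G/e$ together with its complement, $2$-connected), and then invoke Lemma~\ref{restructure_lemma} to conclude $|V(A)| \leq 4$ and so $|V(G)| \leq 10$. Your explicit red-degree bookkeeping for $g_1$ and $g_2$ in $G/e$ via Lemma~\ref{complementation_contraction_together} merely spells out a verification that the paper leaves implicit.
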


\begin{proof}
Without loss of generality, assume $|V(A)| \geq |V(B)|$. Let $G_A$ be the subgraph of $G$ induced by $V(A) \cup \{g_1, g_2\}$. Note that $G_A$ is $2$-connected since $g_1g_2$ is green. Denote the red neighbour of $g_1$ in $A$ by $v$ and let $Q = \{g_1, g_2, v\}$. Observe that if we have a contractible edge $e$ of $G$ having both endpoints in $V(A) \cup \{g_1,g_2\}$ such that neither of the endpoints of $e$ is in $Q$, then, by Lemma~ \ref{complement_connected_observation}, both $G/e$ and $\overline{G/e}$ are $2$-connected, a contradiction. Therefore, we may assume that all contractible edges of $G$ that have both endpoints in $V(A) \cup \{g_1,g_2\}$ meet $Q$. Thus, by Lemma~\ref{restructure_lemma}, $|V(A)| \leq 4$, so $|V(G)| \leq 10$.
\end{proof}

\begin{lemma}
\label{restructure_a}
Let $\{g_1,g_2\}$ be a $2$-cut of an induced-minor-minimal non-$2$-cograph $G$ such that the components of $G-\{g_1, g_2\}$ can be partitioned into two subgraphs, $A$ and $B$, each having at least two vertices. Suppose that, for each $i$ in $\{1,2\}$, if $g_i$ has red degree two, then $g_i$ has no red neighbour in $B$. Then $|V(B)|=2$.

\end{lemma}

\begin{proof}
Suppose $|V(B)| \geq 3$. If all vertices in $B$ are green neighbours of both $g_1$ and $g_2$, then $G-z$  is $2$-connected for all $z$ in $V(B)$. But, by Lemma~ \ref{complement_connected_observation}, $\overline{G}-z$ is also $2$-connected, a contradiction. Thus $B$ has a red neighbour, say $b$, of $g_1$. Note that $g_1$ has red degree greater than two. Now, by Lemma~\ref{contractible_edges_vertex}, we can find a contractible edge, say $e$, of $G$ incident to $b$ such that the other endpoint of $e$ is in $V(B)$. By Lemma~\ref{complement_connected_observation}, $\overline{G/e}$ is $2$-connected, a contradiction.
\end{proof}

\begin{lemma}
\label{restructure_b}
Let $\{g_1,g_2\}$ be a $2$-cut of an induced-minor-minimal non-$2$-cograph $G$ such that the red degree of $g_1$ is two and the components of $G-\{g_1,g_2\}$ can be partitioned into subgraphs $A$ and $B$ such that $|V(A)| \geq |V(B)| \geq 2$. Suppose that one of the following holds.

\begin{enumerate}[label=(\roman*)]
    \item $A$ contains both the red neighbours $\{x,y\}$ of $g_1$, and $g_2$ has no red neighbour in $A-\{x,y\}$ if the red degree of $g_2$ is two; or
    
    \item $g_2$ has red degree two and $A$ contains exactly one pair $\{x,y\}$ of distinct vertices such that $x$ is a red neighbour of $g_1$, and $y$ is a red neighbour of $g_2$.
\end{enumerate}

 \noindent If all contractible edges of $G$ having both endpoints in $V(A) \cup \{g_1,g_2\}$ are incident to a vertex in $\{g_1,g_2,x,y\}$, then $|V(A)| \leq 6$.

\end{lemma}

\begin{proof}
Assume that $|V(A)| > 6$ and so $|V(G)| > 10$. Let $G_A$ be the graph induced on $V(A) \cup \{g_1,g_2\}$. Let $Q = \{g_1, g_2, x, y\}$. By colouring the edge $g_1g_2$ green if necessary, we may assume that $G_A$ is $2$-connected. Note that all the contractible edges of $G_A$ must meet $Q$, otherwise we have a contractible edge $e$ of $G$ such that $\overline{G/e}$ is $2$-connected, a contradiction. By Theorem $\ref{chan_second_result}$, $G_A - Q$ has at most two non-trivial components and, between them, these components have at most four vertices. 

Let $I_A$ and $N_A$ be the sets of isolated and non-isolated vertices of $G_A-Q$ respectively. We note the following.

\begin{sublemma}
\label{same_nbd_sublemma}
If two vertices $i_1$ and $i_2$ in $I_A$ have the same green neighbourhood in $G$, then $\{i_1, i_2\}$ is a green $2$-cut in $G$.
\end{sublemma}

As $\overline{G} - \{g_1,g_2,i_1\}$ is a complete bipartite graph with each part having at least two vertices, it is $2$-connected. Both $g_1$ and $g_2$ have at least two red neighbours in $\overline{G} - \{i_1\}$. Thus $\overline{G} - i_1$ is $2$-connected. Therefore $G-i_1$ is not $2$-connected. It follows that $i_2$ is a cut-vertex of $G-i_1$ and so $\{i_1, i_2\}$ is a green $2$-cut. Thus \ref{same_nbd_sublemma} holds.

 First suppose that $N_A$ is empty. As $|V(A)| \geq 7$, we see that $|I_A| \geq 5$. Suppose that $g_2$ has red degree two. Then all vertices in $I_A$ are adjacent to both $g_1$ and $g_2$ in $G$. Observe that if a vertex $s$ in $I_A$ has green neighbourhood $\{g_1,g_2\}$, then both $G-s$ and $\overline{G}-s$ are $2$-connected, a contradiction. Since $g_1$ and $g_2$ have no red neighbours in $I_A$, the green neighbourhood of a vertex in $I_A$ is $\{g_1,g_2,x\}, \{g_1,g_2,y\},$ or $\{g_1,g_2,x,y\}$. It follows that there are at least two pairs of vertices in $I_A$ such that each vertex in a pair has the same green neighbourhood. Let $\{i_1,i_2\}$ be such a pair. By \ref{same_nbd_sublemma}, $\{i_1, i_2\}$ is a green $2$-cut. Since the red degrees of both $i_1$ and $i_2$ are greater than two, by Lemma \ref{both_greater}, it follows that there is a vertex $t$ of $G$ that has green neighbourhood $\{i_1, i_2\}$. Note that $t$ is either $x$ or $y$. Since we have at least two such green $2$-cuts, it follows that $g_2x$ and $g_2y$ are both red, and there is a red edge connecting $\{x,y\}$ to $I_A$. Observe that $B$ has no red neighbour of $g_1$ or $g_2$. It now follows that, for each $b$ in $V(B)$, both $G-b$ and $\overline{G}-b$ are $2$-connected, a contradiction. Therefore $g_2$ has red degree at least three. By Lemma \ref{restructure_a}, $|V(B)|=2$. Suppose there is no red edge connecting $\{x,y\}$ to $I_A$. Then the possible green neighbourhoods of the vertices in $I_A$ are $\{x,y\}, \{x,y,g_1\}, \{x,y,g_2\},$ or $\{x,y,g_1,g_2\}$. Thus, by \ref{same_nbd_sublemma}, $I_A$ contains a green $2$-cut $\{i_1,i_2\}$ of $G$. Then we get $|V(G)| \leq 8$ by applying Lemma~ \ref{both_greater} to the green $2$-cut $\{i_1,i_2\}$. Therefore there is a red edge connecting $\{x,y\}$ to $I_A$. It follows that, for some $b$ in $V(B)$, both $G-b$ and $\overline{G}-b$ are $2$-connected, a contradiction.

 We may now assume that $G_A - Q$ has at least one non-trivial component. Let $C$ be such a  component and let $\alpha \beta$ be an edge in $C$. Since $\alpha \beta$ is a non-contractible edge of $G_A$, we see that $\{\alpha, \beta\}$ is a green 2-cut of $G_A$ and thus of $G$. Then $G_A - Q \neq C$ otherwise, by Theorem \ref{chan_second_result}, $|V(A)| \leq 6$, a contradiction. Thus both $\alpha$ and $\beta$ have red degree at least three in $G$. Therefore, by Lemma~\ref{both_greater}, $G$ has a unique vertex $t$ that has green neighbourhood $\{\alpha, \beta\}$. Since all vertices in $G_A$ except $x$ and $y$ are adjacent to $g_1$ via a green edge, $t$ is either $x$ or $y$. As $\alpha \beta$ is an arbitrary green edge in $G_A - Q$, it follows that $G_A - Q$ has at most two edges and therefore has either one non-trivial component with at most three vertices, or has two non-trivial components each with two vertices.

 Suppose that $G_A - Q$ has only one edge, $\alpha \beta$, and let $t$ be the unique member of $\{x,y\}$ that has green neighbourhood $\{\alpha, \beta \}$. Then $|I_A| \geq 3$ and the green neighbourhood of every vertex in $I_A$ is contained in $\{g_1,g_2,s\}$ where $\{t,s\} = \{x,y\}$. It is clear that if a vertex $w$ in $I_A$ has green neighbourhood $\{g_1,g_2\}$, then $G-w$ and $\overline{G}-w$ are $2$-connected. It follows that the green neighbourhood of a vertex in $I_A$ is either $\{g_1,s\}$ or $\{g_1,g_2,s\}$. As $|I_A| \geq 3$, it contains vertices $i_1$ and $i_2$ that have the same green neighbourhood. By \ref{same_nbd_sublemma}, $\{i_1,i_2\}$ is a green $2$-cut in $G$. As neither $t$ nor $s$ has $\{i_1,i_2\}$ as its green neighbourhood, Lemma \ref{both_greater} gives the contradiction that $|V(G)| \leq 8$. We now know that $G_A-Q$ has exactly two edges, so $3 \leq |N_A| \leq 4$. Observe that $I_A \neq \emptyset$ and all vertices in $I_A$ have green neighbourhood equal to $\{g_1,g_2\}$ since $x$ and $y$ have their green neighbourhoods contained in $N_A$. Thus, for $w \in I_A$, both $G-w$ and $\overline{G}-w$ are $2$-connected, a contradiction.
\end{proof}

\begin{lemma}
\label{contrasting_colours}
Let $\{g_1,g_2\}$ be a $2$-cut of an induced-minor-minimal non-$2$-cograph $G$ such that $g_1$ and $g_2$ are not adjacent in $G$ and the components of $G-\{g_1, g_2\}$ can be partitioned into two subgraphs, $A$ and $B$, each having at least two vertices. Then $|V(G)| \leq 10$.
\end{lemma}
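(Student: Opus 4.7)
My plan is to proceed by case analysis on the red degrees of $g_1$ and $g_2$. By Corollary~\ref{only_one_case_actually}, both $G$ and $\overline{G}$ are $2$-connected, so every vertex has red degree at least $2$; in particular this holds for $g_1$ and $g_2$, and since $g_1g_2$ is red, each already has the other as a red neighbour. If both $g_1$ and $g_2$ have red degree exceeding $2$, then Lemma~\ref{both_greater} immediately gives $|V(G)| \leq 8$. Otherwise, by symmetry I may assume that $g_1$ has red degree exactly $2$; its red neighbours are then $g_2$ and a unique vertex $v \in V(A) \cup V(B)$, and by relabelling the parts if necessary I may arrange $v \in V(A)$.

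The heart of the argument is to show that every contractible edge $e = xy$ of $G$ with both endpoints in $V(A) \cup \{g_1, g_2\}$ is incident to some vertex of $\{g_1, g_2, v\}$. If not, then $x, y \in V(A) - \{v\}$, so $|V(A)| \geq 3$. In $G/e$, the pair $\{g_1, g_2\}$ is still a $2$-cut whose removed components partition into $A/e$ and $B$, both of size at least $2$. Using Lemma~\ref{complementation_contraction_together}, I would verify that $g_1$ and $g_2$ still have red degree at least $2$ in $G/e$: the red neighbours of $g_1$ are untouched, while those of $g_2$ decrease by at most one. Lemma~\ref{complement_connected_observation} then forces $\overline{G/e}$ to be $2$-connected, contradicting the induced-minor-minimality of $G$ (since $G/e$ is itself $2$-connected because $e$ is contractible). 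With this condition established, Lemma~\ref{restructure_lemma}---whose proof on inspection does not essentially require $|V(A)| \geq |V(B)|$, only the given $|V(B)| \geq 2$ together with the contradiction hypothesis $|V(A)| > 4$---concludes $|V(A)| \leq 4$.

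A parallel argument is then needed to bound $|V(B)|$. If $g_2$ has a red neighbour $v' \in V(B)$, that $v'$ plays the role of $v$ on the $B$-side and a symmetric application of Lemma~\ref{restructure_lemma} together with the contractible-edge obstruction gives $|V(B)| \leq 4$. If instead every red neighbour of $g_2$ besides $g_1$ lies in $A$, the same $G/e$ argument shows that every contractible edge of $G$ with both endpoints in $V(B) \cup \{g_1, g_2\}$ must be incident to $\{g_1, g_2\}$; I would then apply Theorem~\ref{chan_first_result} with $S = \{g_1, g_2, b\}$ for a fixed $b \in B$ to bound $|V(B)|$ by a small constant. Summing, $|V(G)| = |V(A)| + |V(B)| + 2 \leq 10$.

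The hardest part will be the subcase where both $g_1$ and $g_2$ have red degree exactly $2$: neither side then fits Lemma~\ref{restructure_lemma} as stated (because that lemma requires one of the two red degrees to exceed $2$), and the locations of the two second red neighbours $v$ and $v'$ among $A$ and $B$ split into further subcases, each requiring its own direct Chan-style reduction via Theorem~\ref{chan_first_result} or Theorem~\ref{chan_second_result}. The main technical obstacle is carrying out this bookkeeping cleanly while keeping track of how red degrees and the $2$-cut structure survive each edge contraction.
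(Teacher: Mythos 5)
Your overall skeleton (split on the red degrees of $g_1$ and $g_2$; dispose of the both-exceeding-two case via Lemma~\ref{both_greater}; show contractible edges on the $A$-side meet $\{g_1,g_2,v\}$ and invoke Lemma~\ref{restructure_lemma}) matches the paper's, but there are two genuine gaps. First, the subcase you flag as the hardest --- both $g_1$ and $g_2$ of red degree exactly two --- needs no Chan-style analysis at all: since $g_1g_2$ is a red edge, in that subcase $\overline{G}$ has two \emph{adjacent} vertices of degree two, and $\overline{G}$ is itself an induced-minor-minimal non-$2$-cograph, so Lemma~\ref{path_three} immediately gives $|V(G)| \leq 10$. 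This is in fact the first line of the paper's proof, and securing red degree at least three for $g_2$ at the outset is precisely what makes your later contraction steps sound (you need $g_2$'s red degree to survive a contraction after dropping by at most one). Leaving this case as an unexecuted sketch of ``further subcases'' is a real hole in the writeup, even though the fix is one sentence.

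Second, your treatment of $B$ does not work as stated. The ``symmetric application of Lemma~\ref{restructure_lemma}'' on the $B$-side fails its hypotheses: in the surviving case the red-degree-two vertex is $g_1$, whose unique second red neighbour $v$ lies in $A$, while $g_2$ has red degree exceeding two and may have several red neighbours in $B$; so no vertex of the $2$-cut has exactly one red neighbour in $B$, and no relabelling repairs this. Your fallback via Theorem~\ref{chan_first_result} with $S = \{g_1,g_2,b\}$ also does not by itself ``bound $|V(B)|$ by a small constant'': the theorem's conclusion allows $G_B - S$ to be edgeless, and an edgeless $G_B - S$ can a priori be arbitrarily large (many isolated vertices green-joined into $S$); ruling that out requires the additional removable-vertex and red-degree arguments that occupy most of the proof of Lemma~\ref{restructure_lemma}. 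The paper sidesteps all of this by proving directly that $|V(B)| = 2$: if $|V(B)| \geq 3$, then either every vertex of $B$ is green-adjacent to both $g_1$ and $g_2$, whence for any $u \in V(B)$ both $G-u$ and $\overline{G}-u$ are $2$-connected (using Lemma~\ref{complement_connected_observation}), contradicting minimality; or $B$ contains a red neighbour $b$ of $g_2$, and Lemma~\ref{contractible_edges_vertex} supplies a contractible edge $e$ at $b$ with both ends in $B$ whose contraction leaves both $G/e$ and $\overline{G/e}$ $2$-connected, again a contradiction. With $|V(B)| = 2$ in hand, a single application of Lemma~\ref{restructure_lemma} to the $A$-side yields $|V(G)| \leq 8$. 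Your side observation that Lemma~\ref{restructure_lemma} does not essentially use $|V(A)| \geq |V(B)|$ is correct, but it becomes unnecessary once $|V(B)| = 2$ is established.
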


\begin{proof}
First suppose that both $g_1$ and $g_2$ have red degree two and that the red neighbour $v$ of $g_1$ that is distinct from $g_2$ is in $A$, and the red neighbour $u$ of $g_2$ distinct from $g_1$ is in $B$. We may assume that $|V(A)| \geq |V(B)|$. Observe that, if we can find a contractible edge $e$ of $G$ having both the endpoints in $V(A) - v$, then, by Lemma~\ref{complement_connected_observation}, $\overline{G/e}$ is $2$-connected, a contradiction. This implies that all the contractible edges of $G$ that have both endpoints in $V(A) \cup \{g_1,g_2\}$ are incident to $\{g_1,g_2,v\}$. By Lemma \ref{restructure_lemma}, $|V(A)| \leq 4$ and so $|V(G)| \leq 10$. Thus we may assume that both $u$ and $v$ are in $A$ and all contractible edges of $G$ that have both endpoints in $V(A) \cup \{g_1,g_2\}$ are incident to $\{g_1,g_2,u,v\}$. Note that $u \neq v$, otherwise $\overline{G}$ has a cut vertex. We get our result now by Lemmas \ref{restructure_a} and \ref{restructure_b}. We may now assume that the red degree of $g_2$ exceeds two.
By Lemma~\ref{both_greater}, we may further assume that the red degree of $g_1$ is two. 

Let $v$ be the red neighbour of $g_1$ other than $g_2$. We may assume that $v$ is in $A$. By Lemma \ref{restructure_a}, $|V(B)|=2$. Note that all the contractible edges of $G$ that have both endpoints in $V(A) \cup \{g_1,g_2\}$ are incident to $\{g_1,g_2,v\}$. The result now follows by Lemma~\ref{restructure_lemma}.
\end{proof}

Lemma \ref{both_greater} can be modified as follows.

\begin{proposition}
\label{bound_10_first}
Let $\{g_1, g_2\}$ be a $2$-cut of an induced-minor-minimal non-$2$-cograph $G$ such that the components of $G-\{g_1, g_2\}$ can be partitioned into two subgraphs, $A$ and $B$, each having at least two vertices. If $g_2$ has red degree greater than two, then $|V(G)| \leq 10$.
\end{proposition}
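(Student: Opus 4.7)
The plan is to do a case analysis based on the red degree of $g_2$ and the colour of the edge $g_1g_2$, reducing most cases to the preceding lemmas. Since $\overline{G}$ is $2$-connected by Corollary~\ref{only_one_case_actually}, every vertex of $G$ has red degree at least two, so $g_2$ has red degree either equal to two or greater than two.

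First, if $g_2$ has red degree greater than two, Lemma~\ref{both_greater} immediately gives $|V(G)| \leq 8$. So assume $g_2$ has red degree exactly two. If $g_1g_2$ is a red edge, Lemma~\ref{contrasting_colours} applies and gives $|V(G)| \leq 10$. Thus assume further that $g_1g_2$ is green, and let $a_1, a_2$ be the two red neighbours of $g_2$. If $a_1$ and $a_2$ lie in different components of $G-\{g_1,g_2\}$, say $a_1 \in V(A)$ and $a_2 \in V(B)$, then interchanging the roles of $g_1$ and $g_2$ in the labelling allows Lemma~\ref{exactly_one_has_red_degree_two} to be applied and yields $|V(G)| \leq 10$.

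The remaining case, in which $a_1$ and $a_2$ lie in the same component (without loss of generality in $V(A)$, so that every vertex of $V(B)$ is green-adjacent to $g_2$), is not covered by the earlier lemmas and I would handle it in the spirit of Lemma~\ref{restructure_lemma}, taking the four-element set $S = \{g_1, g_2, a_1, a_2\}$ in place of the three-element set used there. The key step is to show that any contractible edge $e$ of $G$ with both endpoints in $V(A) \setminus \{a_1, a_2\}$ leads to a contradiction: in $G/e$, the vertex $g_2$ still has its two red neighbours $a_1, a_2$ and $g_1$ (of red degree at least three) retains red degree at least two; since the $2$-cut $\{g_1, g_2\}$ persists with both sides of size at least two, Lemma~\ref{complement_connected_observation} forces $\overline{G/e}$ to be $2$-connected, contradicting the minimality of $G$. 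Hence every contractible edge of $G_A := G[V(A) \cup \{g_1, g_2\}]$, which is $2$-connected since $g_1g_2$ is green, must meet $S$, and Theorem~\ref{chan_second_result} restricts $G_A - S$ to at most two non-trivial components totalling at most four vertices.

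The main obstacle I anticipate is bounding the number of vertices isolated in $G_A - S$, since these are not controlled by Chan's theorem. Each such isolated vertex $x$ has its green neighbourhood in $G$ contained in $\{g_1, g_2, a_1, a_2\}$, and must contain $g_2$. I would then consider contractions of edges such as $g_2 x$, using Lemma~\ref{complementation_contraction_together} to track the red degree of the new contracted vertex, to rule out most possible neighbourhoods of $x$. Combined with a symmetric analysis applied to $V(B)$, this should bound $|V(A)|$ and $|V(B)|$ and thereby yield $|V(G)| \leq 10$.
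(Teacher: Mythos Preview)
Your case reduction to the situation where $g_2$ has red degree exactly two, $g_1g_2$ is green, and both red neighbours $a_1,a_2$ of $g_2$ lie in $A$ is exactly what the paper does, and your use of the four-element set $S=\{g_1,g_2,a_1,a_2\}$ together with Theorem~\ref{chan_second_result} also matches the paper. So the skeleton is right.

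There are, however, two concrete gaps where your sketch diverges from what is actually needed.

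First, you omit the step $|V(B)|=2$. The paper proves this immediately after reaching the residual case: if $|V(B)|\geq 3$, then either every vertex of $B$ is green-adjacent to both $g_1$ and $g_2$ (and deleting one such vertex leaves both $G$ and $\overline{G}$ $2$-connected), or $B$ contains a red neighbour $b$ of $g_1$, and a contractible edge at $b$ inside $B$ gives the contradiction via Lemma~\ref{complement_connected_observation}. This bound on $|V(B)|$ is not cosmetic; it is used repeatedly afterwards to force red degrees of vertices in $A$ up to at least three, which is what powers the later applications of Lemma~\ref{both_greater}. Your proposed ``symmetric analysis applied to $V(B)$'' is therefore off-target: no Chan-type analysis is run on the $B$ side at all.

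Second, your plan for the isolated vertices of $G_A-S$ --- contracting edges like $g_2x$ and tracking red degrees --- is not how the paper closes the argument, and it is not clear that it would terminate. The paper instead exploits two observations. If two isolated vertices of $G_A-S$ have the same green neighbourhood, they form a green $2$-cut to which Lemma~\ref{contrasting_colours} applies (since no vertex can have green neighbourhood equal to that pair). And if $G_A-S$ has a non-trivial component, then any edge $\alpha\beta$ in it is non-contractible, so $\{\alpha,\beta\}$ is a green $2$-cut; using $|V(B)|=2$ and the existence of other vertices in $G_A-S$ one gets that both $\alpha$ and $\beta$ have red degree at least three, and Lemma~\ref{both_greater} then forces some vertex to have green neighbourhood exactly $\{\alpha,\beta\}$, which can only be $a_1$ or $a_2$. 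This pins down the structure of $G_A-S$ very tightly (at most two edges) and finishes the count. Your proposal does not reach either of these mechanisms, and without them the isolated-vertex count is genuinely unbounded by what you have written.
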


\begin{proof}
Assume that $|V(G)| \geq 11$. Then, by Lemma \ref{both_greater}, the red degree of $g_1$ is two.
Let $x$ and $y$ be the two red neighbours of $g_1$. Note that if $x$ is in $A$ and $y$ is in $B$, then the result follows by Lemma \ref{exactly_one_has_red_degree_two}. By Lemma \ref{contrasting_colours}, we may suppose that the edge $g_1g_2$ is green and both $x$ and $y$ are in $A$. 

The graph $G_A$ induced on $V(A) \cup \{g_1,g_2\}$ is $2$-connected. Let $Q = \{g_1, g_2, x, y\}$. Then every contractible edge $e$ of $G_A$ must meet $Q$ otherwise, by Lemma~ \ref{complement_connected_observation}, we obtain the contradiction that both $G/e$ and $\overline{G/e}$ are $2$-connected. The result now follows by Lemmas \ref{restructure_a} and \ref{restructure_b}.

\end{proof}

We can generalize the above result by removing the condition on the red degrees of the vertices in the $2$-cut at the cost of raising the bound on the number of vertices of $G$ to $16$.

\begin{proposition}
\label{gen_prop_bound_30}
Let $\{g_1, g_2\}$ be a $2$-cut of an induced-minor-minimal non-$2$-cograph $G$ such that the components of $G-\{g_1, g_2\}$ can be partitioned into two subgraphs, $A$ and $B$, each having at least two vertices. Then $|V(G)| \leq 16$.
\end{proposition}

\begin{proof}
Assume that $|V(G)| \geq 17$. By Lemma \ref{contrasting_colours} and Proposition \ref{bound_10_first}, we may assume that the red degrees of both $g_1$ and $g_2$ are two and $g_1g_2$ is green. We may further assume that $|V(A)| \geq |V(B)|$. The graph $G_A$ induced on $V(A) \cup \{g_1,g_2\}$ is $2$-connected.  Let $Q$ be the union of $\{g_1,g_2\}$, the set of red neighbours of $g_1$ in $A$, and the set of red neighbours of $g_2$ in $A$. Then every contractible edge $e$ of $G_A$ must meet $Q$, otherwise, by Lemma \ref{complement_connected_observation}, we obtain a contradiction. Note that if $|Q|=2$, then, by Lemma \ref{restructure_a}, $|V(A)|=2$ and so $|V(G)| \leq 6$, a contradiction. By Theorems \ref{chan_first_result} and \ref{chan_second_result}, $G_A-Q$ has at most four non-trivial components and between them, these components have at most eight vertices. 

Let $I_A$ and $N_A$ be the sets of isolated and non-isolated vertices of $G_A-Q$, respectively. We note the following.

\begin{sublemma}
\label{bound_N_A_new}
$|N_A| \leq 4$.
\end{sublemma}

Assume that $|N_A| > 4$ and so $G_A-Q$ has at least three edges. Let $\alpha\beta$ be an edge of $G_A-Q$. Because $\alpha \beta$ is not a contractible edge of $G_A$, it follows that $\{\alpha, \beta\}$ is a green $2$-cut of $G$. Observe that each of $\alpha$ and $\beta$ has red degree at least three unless $|I_A|$ is empty, and $G_A - Q$ has one non-trivial component, and $|V(B)|=2$. The exceptional case does not arise since it implies, as $V(G)= (V(A)-Q) \cup Q \cup V(B)$, that $|V(G)| \leq 8+6+2= 16$, a contradiction. By Lemma \ref{both_greater}, there is a vertex $t$ that has green neighbourhood $\{\alpha, \beta\}$. Note that the only vertices that could have green neighbourhood $\{\alpha, \beta\}$ are the common red neighbours of $g_1$ and $g_2$. Since there are at most two such vertices and at least three edges in $G_A-Q$, each of which must have an associated such vertex, we have a contradiction. Thus \ref{bound_N_A_new} holds.



Next we show the following.

\begin{sublemma}
\label{bound_I_A_new}
$|I_A| \leq 4$.
\end{sublemma}

Assume that $|I_A| \geq 5$. Note that all vertices in $I_A$ are adjacent to both $g_1$ and $g_2$. Suppose $I_A$ contains a vertex $i$ such that all vertices in $Q - \{ g_1,g_2 \}$ have degree at least two in $G - i$. Then both $G - i$ and $\overline{G} – i$ are $2$-connected, a contradiction. It follows that, for every vertex $i$ of $I_A$, there is a \textit{special} green edge joining $i$ to a vertex $q$ of $Q-\{g_1,g_2\}$ such that $q$ has green degree two. The set $Q'$ of such vertices $q$ is contained in $Q-\{g_1,g_2\}$. If a member $q'$ of $Q'$ is a common red neighbour of $g_1$ and $g_2$, then it meets at most two special green edges from $I_A$. If, instead, $q'$ has a single red neighbour in $\{g_1,g_2\}$, then it has a single green neighbour in $\{g_1,g_2\}$ and so meets at most one special green edge. Thus the number of red edges from $\{g_1,g_2\}$ to $Q'$ is an upper bound on the number of special green edges from $I_A$. Hence $|I_A| \leq 4$, a contradiction. Thus \ref{bound_I_A_new} holds.

\begin{sublemma}
\label{sublemma_new_edits3}
$|V(B)| \geq 3$.
\end{sublemma}

Suppose that $|V(B)|=2$. Then, by \ref{bound_N_A_new} and \ref{bound_I_A_new}, $|V(G)| \leq 4+4+6+2=16$, a contradiction. Thus \ref{sublemma_new_edits3} holds.

By \ref{sublemma_new_edits3}, since $|V(B)| \neq 2$, Lemma \ref{restructure_a} implies that $B$ contains at least one red neighbour of $\{g_1,g_2\}$. Assume that $B$ contains exactly one such red neighbour $v$. Let $x$ and $y$ be two green neighbours of $v$ in $V(B) \cup \{g_1,g_2\}$. If $V(B)-\{v,x,y\}$ contains a vertex $t$, then $G-t$ and $\overline{G}-t$ are both $2$-connected. It follows that $|V(B)|\leq 3$. Again by \ref{bound_N_A_new} and \ref{bound_I_A_new}, we get $|V(G)| \leq 4+4+5+3=16$, a contradiction. Note that if $A$ contains exactly one of the red neighbours of $\{g_1,g_2\}$, then, by Lemma \ref{restructure_lemma}, $|V(A)| \leq 4$, so $|V(G)| \leq 10$, a contradiction. We may now assume that the red neighbourhood of $\{g_1,g_2\}$ has size four, and each of $A$ and $B$ contains exactly two of those vertices. Then, by Lemma \ref{restructure_b}, $|V(A)| \leq 6$, so $|V(G)| \leq 14$, a contradiction.
\end{proof}

The following corollary summarizes our results about the induced-minor-minimal non-$2$-cographs so far.

\begin{corollary}
\label{new_corollary}
Let $G$ be an induced-minor-minimal non-$2$-cograph. Then

\begin{enumerate}[label=(\roman*)]
    \item $|V(G)| \leq 16;$ or
    \item $\overline{G}$ is a cycle of length at least five; or
    \item $G$ has vertex connectivity two, and, for every $2$-cut $\{g_1,g_2\}$ of $G$, the graph $G-\{g_1,g_2\}$ has exactly two components and one component contains a single vertex.
\end{enumerate}
\end{corollary}

If an induced-minor-minimal non-$2$-cograph $G$ satisfies $(iii)$ of the above corollary, we say that $G$ is an induced-minor-minimal non-$2$-cograph of \textit{type $(iii)$}. The next lemma identifies several infinite families of such graphs.

\begin{lemma}
\label{new_graphs}
Let $G$ be a graph such that $\overline{G}$ is isomorphic to $L_k, M_k, M_k', N_k,$ $N_k'$, or $N_k''$ for some $k\geq 1$ where $L_k,M_k,$ and $N_k$ are shown in Figures $\ref{2_deletable_vertices}, \ref{3_deletable_vertices}$, and $\ref{4_deletable_vertices}$, respectively. Then $G$ is an induced-minor-minimal non-$2$-cograph of type $(iii)$.
\end{lemma}

\begin{proof}
 It is clear that $G$ is not a $2$-cograph as both $G$ and $\overline{G}$ are $2$-connected. Assume that $H$ is an induced subgraph of $G$ such that both $H$ and $\overline{H}$ are $2$-connected. It is clear that $v \in V(H)$ otherwise $H$ or $\overline{H}$ is not $2$-connected. Note that $V(H)$ also contains the vertices $x$ and $y$ since $x$ and $y$ are the only green neighbours of $v$. It now follows that $V(H)$ contains the red neighbours of $x$ and the red neighbours of $y$. It is now straightforward to see that $H = G$. Therefore every proper induced subgraph of $G$ and of  $\overline{G}$ is a $2$-cograph. For an edge $\alpha \beta$ of $G$, it follows by Lemma \ref{complementation_contraction_together} that the complement of $G/\alpha \beta$ is either a proper induced subgraph of $\overline{G}$ or a proper induced subgraph of $\overline{G}$ $1$-summed with $K_2$ or $K_3$. Thus $G/\alpha \beta$ is a $2$-cograph and so $G$ is an induced minor-minimal non-$2$-cograph. Moreover, $\{x,y\}$ is its unique $2$-cut and $G$ is an induced-minor-minimal non-$2$-cograph of type $(iii)$.
\end{proof}

By a similar argument to that just given, we obtain the following.

\begin{lemma}
\label{new_lemma_edits4}
For a non-negative integer $k$, the graph $\overline{F_k}$ is an induced-minor-minimal non-$2$-cograph of type $(iii)$.
\end{lemma}

 In the rest of the section, we find all the other classes of induced-minor-minimal non-$2$-cographs of type $(iii)$ thereby proving Theorem \ref{main_new_addition}.

\begin{lemma}
\label{first_new_lemma}
Let $G$ be an induced-minor-minimal non-$2$-cograph of type $(iii)$. Then $|V_g| \leq 3$ or $\overline{G}$ is of type $(iii)$.
\end{lemma}

\begin{proof}
Suppose that $\overline{G}$ is not of type $(iii)$. By Lemma \ref{two_cases_induced_minor_minimal} and Proposition \ref{case_1}, we may assume that $\overline{G}$ has vertex connectivity two. Take a red $2$-cut $\{r_1,r_2\}$ of $G$ such that the components of $\overline{G}-\{r_1,r_2\}$ can be partitioned into subgraphs $A$ and $B$, and $|V(A)| \geq |V(B)| \geq 2$. If $|V(B)| \geq 3$, then all vertices in $V(G)-\{r_1,r_2\}$ have green degree at least three and so $|V_g| \leq 2$. Now suppose that $V(B) = \{b_1,b_2\}$. Note that there is at most one vertex $a$ in $A$ that has green neighbourhood $\{b_1,b_2\}$ since $G$ is of type $(iii)$. One can now check that all vertices in $V(G)-\{r_1,r_2,a\}$ have green degree at least three, and so $|V_g| \leq 3$.
\end{proof}

\begin{lemma}
\label{second_new_lemma}
Let $G$ be an induced-minor-minimal non-$2$-cograph such that $|V(G)| > 10$. Suppose that $\overline{G}$ is not isomorphic to a cycle or to $F_k$ for some $k \geq 0$. Then the graph induced on the vertex set $V_g$ is a complete red graph and the graph induced on $V_r$ has at most one red edge.
\end{lemma}

\begin{proof}
By Lemma \ref{path_three}, the graph induced on $V_g$ is a complete red graph. Assume that the graph induced on $V_r$ has two red edges $e=u_1u_2$ and $f=v_1v_2$. Note that if $e$ and $f$ are disjoint, then, by Lemma \ref{edited_new_case1_james_work}, we obtain a contradiction. Therefore we may assume that $u_2=v_1$. Let $\alpha$ and $\beta$ be the respective neighbours of $u_1$ and $v_2$ in $\overline{G}-v_1$. Note that $\alpha$ and $\beta$ are distinct otherwise we have a cut vertex in $\overline{G}$, a contradiction. Let $P$ be a shortest $\alpha\beta$-path distinct from $\alpha u_1u_2v_2 \beta$. Then $P$ avoids $\{u_1,u_2,v_2\}$ and the red graph induced on $V(P) \cup \{u_1,u_2,v_2\}$ is a cycle. It follows by the minimality of $G$ that $V(G) = V(P) \cup \{u_1,u_2,v_2\}$ and so $\overline{G}$ is a cycle, a contradiction.

\end{proof}

In the following lemma, we note that either $|V_g|$ or $|V_r|$ is bounded.

\begin{lemma}
\label{V_g_or_V_r_has_bound_four}
Let $G$ be an induced-minor-minimal non-$2$-cograph. Then either $|V_g|$ or $|V_r|$ is at most three, or $|V_g| = |V_r| =4$.
\end{lemma}

\begin{proof}

Note that there are at most $2|V_g|$ green edges and at most $2|V_r|$ red edges joining a vertex in $V_g$ to a vertex in $V_r$. Since there are $|V_g||V_r|$ edges joining vertices in $V_g$ to vertices in $V_r$, we have

$$2|V_g| + 2|V_r| \geq |V_g||V_r|.$$

This inequality is symmetric with respect to $|V_g|$ and $|V_r|$, so we may assume that $|V_g| \geq |V_r|$. Then $2 + 2 \frac{|V_r|}{V_g} \geq |V_r|$. Thus $|V_r| \leq 4$. Moreover, if $|V_r|=4$, then $|V_g|=4$.
\end{proof}

Next we note the following useful observation.

\begin{lemma}
\label{vertices_outside_V_g_union_V_r_second}
Let $G$ be an induced-minor-minimal non-$2$-cograph such that $|V(G)| > 10$. If all vertices of a subset $S$ of $V(G)-(V_g \cup V_r)$ either have a red neighbour in $V_r$ or a green neighbour in $V_g$, then 

\begin{sublemma}
\label{bound_S_new}
$|S| \leq 2|V_g\cup V_r| - |V_g||V_r|.$
\end{sublemma}

 \noindent Moreover, when equality holds here, either each vertex in $S$ has exactly one green neighbour in $V_g$ or has exactly one red neighbour in $V_r$ but not both. In particular, if $S = V(G)-(V_g \cup V_r)$, then 
$$11 + |V_g||V_r| \leq 3|V_g| + 3|V_r|.$$
\end{lemma}

\begin{proof}
There are $|V_g||V_r|$ red or green edges joining a vertex in $V_g$ to a vertex in $V_r$. There are at most $2|V_g|$ green such edges and at most $2|V_r|$ red such edges. Thus among the green edges meeting $V_g$ and the red edges meeting $V_r$ at most $2|V_g \cup V_r| - |V_g||V_r|$ have an endpoint in $V(G) - (V_g \cup V_r)$. Therefore, $|S| \leq 2|V_g\cup V_r| - |V_g||V_r|$ and it is clear that, when equality holds, each vertex in $S$ satisfies the given condition. If $S = V(G)- (V_g \cup V_r)$, then it is clear that $11 + |V_g||V_r| \leq 3|V_g| + 3|V_r|$ since $|V(G)| \geq 11$.
\end{proof}

Lemma \ref{first_new_lemma} can be improved in the following way.

\begin{lemma}
\label{first_new_lemma_improved}
Let $G$ be an induced-minor-minimal non-$2$-cograph of type $(iii)$. Then $|V(G)| \leq 10$ or $|V_g| \leq 3$.
\end{lemma}

\begin{proof}
 By Lemma \ref{first_new_lemma}, it is enough to show that if $\overline{G}$ is of type $(iii)$, then $|V(G)| \leq 10$ or $|V_g| \leq 3$. Suppose that $\overline{G}$ is of type $(iii)$. Since every vertex of $V(G)$ is either in a red $2$-cut or a green $2$-cut, and both $G$ and $\overline{G}$ are of type $(iii)$, we have the following.

\begin{sublemma}
\label{new_sublemma_property_a}
Every vertex in $V(G)$ either has a green neighbour in $V_g$ or a red neighbour in $V_r$.
\end{sublemma}

Since a vertex in $V_g$ has no green neighbour in $V_g$ by Lemma \ref{path_three}, it follows by \ref{new_sublemma_property_a} that $|V_g| \leq 2|V_r|$ since the number o f red-degree-two neighbours of vertices in $V_g$ is at least $|V_g|$ and at most $2|V_r|$.
The following is an immediate consequence of Lemma \ref{vertices_outside_V_g_union_V_r_second} and \ref{new_sublemma_property_a}.

\begin{sublemma}
\label{new_sublemma_counting_vertices}
$|V(G)| \leq |V_g|+|V_r| + 2|V_r \cup V_g| - |V_g||V_r| = 3|V_g|+3|V_r|-|V_g||V_r|$.

\end{sublemma}

Note that if $|V_g| = |V_r| = 4$, then $|V(G)| \leq 8$ and the result holds. Therefore, by Lemma \ref{V_g_or_V_r_has_bound_four}, we may assume that $|V_r|$ is at most three. 
 As $|V_g| \leq 2|V_r|$, by \ref{new_sublemma_counting_vertices}, checking the possibilities for $|V_r|$, we obtain that $|V(G)| \leq 10$.
\end{proof}

In the next proof, we adopt the convention that, for a $2$-cut $\{x,y\}$ of a graph $H$, the graphs $A$ and $B$ are disjoint subgraphs of $H-\{x,y\}$ with $V(A) \cup V(B) = V(H - \{x,y\})$ such that $|V(A)| \geq |V(B)|$, and $|V(B)|$ is maximal. 

\begin{lemma}
\label{first_new_lemma_more_improved}
Let $G$ be an induced-minor-minimal non-$2$-cograph such that $G$ is of type $(iii)$. Then  $|V(G)|\leq 16$ or $|V_g| \leq 1$.
\end{lemma}

\begin{proof}

By Lemma \ref{first_new_lemma_improved}, we may assume that $|V_g| \leq 3$. The following observation is immediate.

\begin{sublemma}
\label{new_sublemma_edits5}
Let $\{r_1,r_2\}$ be a red $2$-cut of $G$. If $|V(B)| \geq 3$, then $\{r_1,r_2\} \subseteq V_g$.
\end{sublemma}

Next we show the following.

\begin{sublemma}
\label{new_sublemma_edits6}
There are at most two vertices outside of $V_g$ that have neither a red neighbour in $V_r$ nor a green neighbour in $V_g$.
\end{sublemma}

Every vertex $v$ of $V(G)-V_g$ is in a green $2$-cut or a red $2$-cut. In the first case, because $G$ is of type $(iii)$, $v$ has a green neighbour in $V_g$. In the second case, let $\{v,r\}$ be a red $2$-cut. By \ref{new_sublemma_edits5}, we may assume that $|V(B)| \leq 2$. If $|V(B)|=1$, then $v$ has a red neighbour in $V_r$. Suppose $|V(B)|=2$. If $w$ is a vertex with green neighbourhood $V(B)$, then $V(B)$ is a green $2$-cut. As $G$ is of type $(iii)$, $w$ is unique. If $|V_g|=3$, it follows that $\{v,r\} \subseteq V_g$, a contradiction, so \ref{new_sublemma_edits6} holds.

If $|V_g| \leq 1$, then the lemma holds, so we may assume $|V_g|=2$. For the red $2$-cut $\{v,r\}$, we know that $|V(B)|=2$. Now each vertex $u$ of $V(G)-V(B)-\{v,r\}$ has $V(B)$ in its green neighbourhood. Thus $\{u,r\}$ cannot be a red $2$-cut with the same $V(B)$. Thus $\{v,r\}$ is the unique red $2$-cut with the given $V(B)$. As $v \notin V_g$ and $G$ is of type $(iii)$, the set $V(B)$ is the green neighbourhood of exactly one vertex in $V_g$. Since $|V_g|=2$, it follows that we have at most two red $2$-cuts for which $|V(B)|=2$. Moreover, each such red $2$-cut contains a member of $V_g$. Now \ref{new_sublemma_edits6} follows immediately. 

By \ref{new_sublemma_edits6} and Lemma \ref{vertices_outside_V_g_union_V_r_second}, $|V(G)| \leq 3|V_g| + 3|V_r| - |V_g||V_r| + 2$. If $|V_g|=3$, then $|V(G)| \leq 11$. Suppose $|V_g|=2$. Then, by Lemma \ref{second_new_lemma}, $|V_r| \leq 2|V_g| + 2 + 2 = 8$, so $|V(G)| \leq 16$.
\end{proof}

\begin{proof}[Proof of Theorem \ref{main_new_addition}]
\setcounter{theorem}{32}
\setcounter{sublemma}{0}
We may assume that $G$ is of type $(iii)$ otherwise we have the result by Corollary \ref{new_corollary}. We may also assume that neither $G$ nor $\overline{G}$ is critically $2$-connected, otherwise the result follows by Proposition \ref{case_1} or Proposition \ref{new_case1_james_work}. It is now clear that $V_g$ is non-empty. Therefore, by Lemma \ref{first_new_lemma_more_improved}, $|V_g|=1$ or $|V(G)| \leq 16$. If $|V(G)| \leq 16$, then we have our result. Therefore we may assume that $|V_g|=1$. It now follows that $G$ has a unique green $2$-cut $\{x,y\}$. Thus every vertex not in $\{x,y\}$ is in a red $2$-cut. As $\overline{G}$ is not critically $2$-connected, we may assume that $\overline{G} -\{x\}$ is $2$-connected. Note that $G-\{x,y\}$ has a non-trivial component $A$ and a trivial component, say $\{v\}$.

\begin{sublemma}
\label{t_vertex_sublemma1}
There is no vertex $t$ in $A$ such that $\overline{G} -\{x,v,t\}$ is connected and each of $x$ and $y$ has at least two neighbours in  $\overline{G} -\{t\}$ .
\end{sublemma}

Assume that this fails. Since $\overline{G}-\{x,v,t\}$ is connected and $v$ is adjacent to all vertices of $\overline{G}-\{x,t\}$ except $y$, we conclude that $\overline{G}-\{x,t\}$ is $2$-connected as $\overline{G} - x$ is $2$-connected and $y$ has at least two neighbours in $\overline{G} - \{x,t\}$. It now follows that $\overline{G}-\{t\}$ is $2$-connected since $x$ has at least two neighbours $\overline{G}-\{t\}$. This is a contradiction since $t$ is in a red $2$-cut.

\begin{sublemma}
\label{new_sublemma_edits7}
$\overline{G}[A]$ is connected.
\end{sublemma}

To show this, assume $\overline{G}[A]$ is disconnected. Because $\overline{G}-x$ is $2$-connected, $\overline{G}-\{x,v\}$ is connected. Since $\overline{G}[A] = \overline{G}-\{x,v,y\}$, it follows that $y$ has a neighbour in each component of $\overline{G}[A]$. As $\overline{G}-\{x,v\}$ is connected, there is a vertex $t$ in $A$ such that $\overline{G}-\{x,v,t\}$ is connected where, if possible, $t$ is chosen from a component of $\overline{G}[A]$ with at least two vertices. By \ref{t_vertex_sublemma1}, $t$ is a red neighbour of some $z$ in $\{x,y\}$ such that $z$ has degree two in $\overline{G}$. Suppose $z=y$. Then, as $\overline{G}-\{x,v,t\}$ is connected, $y$ is adjacent to $t$ and to each component of $\overline{G}[A]$, we deduce that $\{t\}$ is a component of $\overline{G}[A]$ and $|V(A)|=2$. Thus $|V(G)|=5$ and so, as $G$ is a non-$2$-cograph, $G$ is a $5$-cycle, a contradiction. We deduce that $z=x$ and $x$ has red degree two. Thus $\overline{G}-\{x,v\}$ has exactly two vertices $t$ for which $\overline{G}-\{x,v,t\}$ is connected, and each such vertex is a red neighbour of $x$. It follows that $\overline{G}-\{x,v\}$ is a path and the leaves of this path are the neighbours of $x$ in $\overline{G}-\{v\}$. Therefore $\overline{G}-\{v\}$ is a cycle, a contradiction.

Similar to \ref{t_vertex_sublemma1}, we have the following.

\begin{sublemma}
\label{t_vertex_sublemma2}
There is no vertex $t$ in $A$ such that $\overline{G}-\{y,v,t\}$ is connected and each of $x$ and $y$ has at least two neighbours in $\overline{G}-\{t\}$. 
\end{sublemma}
Assume that this fails. If $x$ has at least two neighbours in $\overline{G}-\{y,t\}$, then the proof follows as in \ref{t_vertex_sublemma1} by interchanging $x$ and $y$. Therefore we may assume that $x$ has exactly one neighbour in $\overline{G}-\{y,t\}$. Thus $\overline{G}[A] - \{t\}$ is connected and so $\overline{G}-\{x,y,t\}$ is $2$-connected. Since each of $x$ and $y$ has at least two neighbours in $\overline{G}-\{t\}$, we conclude that $\overline{G}-\{t\}$ is $2$-connected, a contradiction.

We call a vertex $t$ of $\overline{G}[A]$ \textbf{deletable} if $\overline{G}[A]-\{t\}$ is connected. By combining \ref{t_vertex_sublemma1} and \ref{t_vertex_sublemma2}, we obtain the following.

\begin{sublemma}
\label{new_sublemma_edits8}
A deletable vertex $t$ of $\overline{G}[A]$ is a neighbour in $\overline{G}$ of some $z$ in $\{x,y\}$ where $z$ has degree two in $\overline{G}$.
\end{sublemma}

\begin{sublemma}
\label{new_sublemma_edits9}
The number of deletable vertices in $\overline{G}[A]$ is in $\{2,3,4\}$.
\end{sublemma}

To see this, first observe that, since $\overline{G}[A]$ is connected having at least two vertices, it has at least two deletable vertices. Now suppose that $\overline{G}[A]$ has at least five deletable vertices. Then there is such a vertex $t$ so that, in $\overline{G}-\{t\}$, each of $x$ and $y$ has degree at least two. As $\overline{G}-\{x,v,t\}$ is connected, we have a contradiction to \ref{t_vertex_sublemma1}. Thus \ref{new_sublemma_edits9} holds.

The rest of the proof treats the three possibilities for the number of deletable vertices of $\overline{G}[A]$. First suppose that $\overline{G}[A]$ has exactly two deletable vertices $s$ and $t$. Then $\overline{G}[A]$ is a path, which we may assume has at least five vertices. Let $s'$ and $t'$ be the respective neighbours of $s$ and $t$ in $\overline{G}[A]$. Note that if either $x$ or $y$ has red neighbourhood $\{s,t\}$, then we have an induced red cycle of size at least six, which is a contradiction. Thus, by \ref{t_vertex_sublemma1} and \ref{t_vertex_sublemma2}, we may assume that both $x$ and $y$ have red degree two, and $s$ is a red neighbour of $x$, and $t$ is a red neighbour of $y$. If $xy$ is red, then $\overline{G}$ has an induced cycle of length at least seven, a contradiction. Thus both the red neighbours of $x$ and $y$ are in $A$. We show next that the respective red neighbourhoods of $x$ and $y$ are $\{s,s'\}$ and $\{t,t'\}$. To see this, let $\{s,w\}$ be the neighbourhood of $x$ in $\overline{G}$ and suppose $w \neq s'$. If $s'$ is not a red neighbour of $y$, then $\overline{G} - \{y,v,s'\}$ is connected and we get a contradiction to \ref{t_vertex_sublemma1}. Taking $z$ to be a vertex of $A$ not in $\{s,t,w,s'\}$, we see that $\overline{G}-\{x,v,z\}$ or $\overline{G}-\{y,v,z\}$ is connected and we get a contradiction to \ref{t_vertex_sublemma1} or \ref{t_vertex_sublemma2}. We conclude that $\{s,s'\}$ is the red neighbourhood of $x$. By symmetry, $\{t,t'\}$ is the red neighbourhood of $y$. Thus $\overline{G}$ is isomorphic to $L_k$ for some $k\geq 1$.

Next suppose that $\overline{G}[A]$ has exactly three deletable vertices, $s, t,$ and $u$. Then $\overline{G}[A]$ has a spanning tree $T$ having $s,t,$ and $u$ as its leaves. By \ref{new_sublemma_edits8}, each vertex in $\{s,t,u\}$ is adjacent to a red-degree-$2$ vertex in $\{x,y\}$. Moreover, neither $x$ nor $y$ has red degree exceeding two, and $xy$ is not red. Now $\overline{G}[A]$ is connected, so $\overline{G}-\{x,y\}$ is $2$-connected. As $xy$ is red, it follows that $\overline{G}-y$ is $2$-connected. Recall that we already know that $\overline{G}-x$ is $2$-connected. By symmetry, we may assume that the red neighbourhood of $x$ is $\{s,t\}$, and so $u$ is a red neighbour of $y$. Let $u'$ be the red neighbour of $u$ in $T$. Then the red neighbourhood of $y$ is $\{u,u'\}$ otherwise $\overline{G}- \{x,v,u'\}$ is connected and we get a contradiction to \ref{t_vertex_sublemma1}.  Similarly, the distance between $s$ and $t$ in $T$ is two otherwise we get a contradiction to \ref{t_vertex_sublemma2}. As $\overline{G}[A]$ has exactly three deletable vertices, the only possible edge in $\overline{G}[A]$ that is not in $T$ is $st$. Thus $\overline{G}$ is isomorphic to $M_k$ or $M_k'$ for some $k \geq 1$.

Finally, suppose that $\overline{G}[A]$ has four deletable vertices, $s, t, u,$ and $z$. We may assume that the respective red neighbourhoods of $x$ and $y$ are $\{s,t\}$ and $\{u,z\}$. Again let $T$ be a spanning tree of $\overline{G}[A]$ such that $s,t,u,$ and $z$ are leaves of $T$. Note that the distance between $s$ and $t$, and $u$ and $z$ in $T$ is two. Thus $\overline{G}$ is isomorphic to $N_k, N_k'$, or $N_k''$ for some $k\geq 1$.
\end{proof}

\section{Induced-minor-minimal non-$2$-cographs whose complements are also induced-minor-minimal non-$2$-cographs}

In this section, we consider $\mathcal{G}$, the class of induced-minor-minimal non-$2$-cographs $G$ such that $\overline{G}$ is also an induced-minor-minimal non-$2$-cograph. We show that all graphs in $\mathcal{G}$ have at most ten vertices. We give an exhaustive list of all these graphs in the appendix. We begin the section with the following
immediate consequence of Lemma~ \ref{path_three}.

\begin{corollary}
\label{induced_on_V_g_and_V_r}
Let $G$ be a graph in $\mathcal{G}$ such that $|V(G)| > 10$. Then the graph induced on the vertex set $V_g$ is a complete red graph and the graph induced on $V_r$ is a complete green graph. 
\end{corollary}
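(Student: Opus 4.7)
The plan is to obtain this corollary by applying Lemma~\ref{path_three} separately to $G$ and to its complement $\overline{G}$, exploiting the fact that membership in $\mathcal{G}$ is closed under complementation: by the very definition of $\mathcal{G}$, a graph $G$ lies in $\mathcal{G}$ if and only if $\overline{G}$ does, and then $\overline{G}$ is itself an induced-minor-minimal non-$2$-cograph.

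For the first claim, suppose for contradiction that two vertices $u$ and $v$ of $V_g$ are joined by a green edge. Since vertices of $V_g$ have green degree two, $u$ and $v$ are two adjacent vertices of degree two in $G$. Lemma~\ref{path_three}, applied to $G$, then forces $|V(G)|\leq 10$, contrary to our hypothesis. Hence no two vertices of $V_g$ are joined by a green edge, which says exactly that the subgraph of the red graph induced on $V_g$ is complete.

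For the second claim, I would apply the same reasoning to $\overline{G}$. Since $\overline{G}\in\mathcal{G}$ and $|V(\overline{G})|=|V(G)|>10$, the hypothesis of Lemma~\ref{path_three} is again satisfied. The set $V_r$ of vertices of red degree two in $G$ coincides with the set of vertices of degree two in $\overline{G}$. Therefore, if two members of $V_r$ were adjacent in $\overline{G}$, Lemma~\ref{path_three} applied to $\overline{G}$ would give $|V(\overline{G})|\leq 10$, a contradiction. Consequently every pair of vertices in $V_r$ is joined by a green edge, so the graph induced on $V_r$ is a complete green graph.

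The only point worth being careful about is the symmetry observation — that $\mathcal{G}$ is self-complementary so that Lemma~\ref{path_three} may legitimately be applied with the roles of the two colours reversed — but this is immediate from how $\mathcal{G}$ is defined. No further analysis is needed; the corollary is just the contrapositive of Lemma~\ref{path_three} taken in each colour class.
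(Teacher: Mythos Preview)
Your argument is correct and matches the paper's approach: the paper states that the corollary is an immediate consequence of Lemma~\ref{path_three}, and your proof spells out exactly this --- applying Lemma~\ref{path_three} to $G$ for $V_g$ and to $\overline{G}$ for $V_r$, using that the definition of $\mathcal{G}$ guarantees $\overline{G}$ is also an induced-minor-minimal non-$2$-cograph.
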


The next lemma shows that if the number of vertices of a graph $G$ in $\mathcal{G}$ exceeds ten, then $V(G) - (V_g\cup V_r)$ is non-empty.

\begin{lemma}
\label{not_V_g_union_V_r}
Let $G$ be a graph in $\mathcal{G}$ such that $|V(G)| > 10$. Then $V(G) \neq V_g\cup V_r$.
\end{lemma}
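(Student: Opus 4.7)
The plan is to assume for contradiction that $|V(G)| > 10$ and $V(G) = V_g \cup V_r$, and then extract a double-counting identity which forces $|V(G)| \leq 9$. First I would verify that $V_g \cap V_r = \emptyset$: any vertex in the intersection would have green degree $2$ and red degree $2$, forcing $|V(G)| = 5$, which is incompatible with the hypothesis $|V(G)| > 10$. Hence $|V_g| + |V_r| = |V(G)| > 10$, and in particular both $V_g$ and $V_r$ are non-empty. By Corollary~\ref{induced_on_V_g_and_V_r}, $V_g$ is an independent set of $G$ while $V_r$ is a clique of $G$. Consequently, every $u \in V_g$ has both of its green neighbours in $V_r$, and every $v \in V_r$ has both of its red neighbours in $V_g$; in particular, each $v \in V_r$ has precisely $|V_g| - 2$ green neighbours in $V_g$.

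The core step is then to count the green edges between $V_g$ and $V_r$ in two ways. Summing green degrees from the $V_g$-side gives $2|V_g|$, and summing from the $V_r$-side gives $(|V_g|-2)|V_r|$, so
\[
2|V_g| = (|V_g|-2)|V_r|.
\]
Since the left-hand side is positive, $|V_g| \geq 3$, and by symmetry $|V_r| \geq 3$. Rewriting the identity as $|V_r| = 2 + \frac{4}{|V_g|-2}$, we see that $|V_g| - 2$ must divide $4$, which leaves only $(|V_g|, |V_r|) \in \{(3,6),(4,4),(6,3)\}$. In every case $|V(G)| \leq 9$, contradicting $|V(G)| > 10$.

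All the structural work is absorbed into Corollary~\ref{induced_on_V_g_and_V_r}; once that bipartition-like information is in hand, the argument reduces to the single counting identity above. I therefore do not anticipate a serious obstacle — the only genuine care needed is the opening observation that $V_g$ and $V_r$ are disjoint, which otherwise would invalidate the subsequent edge count.
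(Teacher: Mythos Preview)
Your argument is correct and is essentially the same double-counting as in the paper: the paper derives $2|V_g| + 2|V_r| = |V_g||V_r|$ by counting green and red $V_g$--$V_r$ edges separately, which is algebraically equivalent to your identity $2|V_g| = (|V_g|-2)|V_r|$. The only differences are cosmetic---you make the disjointness of $V_g$ and $V_r$ explicit and finish via a divisibility observation rather than the paper's short case split---so the two proofs coincide in substance.
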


\begin{proof}

Assume that $V(G) = V_g \cup V_r$. There are $2|V_g|$ green edges and $2|V_r|$ red edges joining a vertex in $V_g$ to vertex in $V_r$. Thus 

\begin{sublemma}
\label{equation1}
$2|V_g| + 2|V_r| = |V_g||V_r|$.
\end{sublemma}

We may assume that $|V_g| \leq |V_r|$. If $|V_g| = |V_r|$, then $4|V_r| = |V_r|^2$, so $|V_r| =4$, a contradiction. Therefore $|V_g| \leq |V_r| -1$ so, by \ref{equation1}, $|V_g| |V_r| \leq 4|V_r| - 2$. Thus $|V_g| \leq 3$. If $|V_g| =3$, then, by \ref{equation1}, $|V_r| = 6$, so $|V(G)| = 9$, a contradiction. If $|V_g| \leq 2$, then we contradict \ref{equation1}. 
\end{proof}

Next we note a useful observation about the vertices in $V(G)- (V_g\cup V_r)$.

\begin{lemma}
\label{vertices_outside_V_g_union_V_r}
Let $G$ be a graph in $\mathcal{G}$ such that $|V(G)| > 10$. Then every vertex in $V(G) - (V_g\cup V_r)$ either has a green neighbour in $V_g$ or a red neighbour in $V_r$.
\end{lemma}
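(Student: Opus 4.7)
The plan is to argue by contradiction. Suppose some $x \in V(G) - (V_g \cup V_r)$ has neither a green neighbour in $V_g$ nor a red neighbour in $V_r$. Since $x \notin V_g \cup V_r$ and both $G$ and $\overline{G}$ are $2$-connected by Lemma~\ref{induced_minor_minimal_$2$-connected}, the green degree and red degree of $x$ are each at least three. The key observation is that, under this hypothesis, the vertex $x$ can be removed from $G$ without breaking the $2$-connectivity of either $G$ or $\overline{G}$. Once both $G - x$ and $\overline{G} - x$ are shown to be $2$-connected, $G - x$ is a $2$-connected induced subgraph of $G$ whose complement is also $2$-connected, so $G - x$ is not a $2$-cograph, contradicting the induced-minor-minimality of $G$.

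To show that $G - x$ is $2$-connected, I would suppose the contrary and pick a vertex $y$ so that $\{x,y\}$ is a $2$-cut of $G$. I would consider the components of $G - \{x,y\}$ and split into two cases. In the first case, these components can be partitioned into two subgraphs $A$ and $B$ each with at least two vertices; then Proposition~\ref{bound_10_first}, applied with $g_1 = x$ (which has red degree exceeding two since $x \notin V_r$), yields $|V(G)| \leq 10$, a contradiction. In the second case no such partition exists; a short counting argument using $|V(G)| > 10$ then forces $G - \{x,y\}$ to have exactly two components, one of which is a singleton $\{z\}$. The neighbours of $z$ in $G$ all lie in $\{x,y\}$, so $z$ has green degree at most two, and the $2$-connectivity of $G$ forces green degree exactly two. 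Thus $z \in V_g$ and $xz$ is green, contradicting the hypothesis that $x$ has no green neighbour in $V_g$.

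The argument that $\overline{G} - x$ is $2$-connected is entirely symmetric: $\overline{G}$ lies in $\mathcal{G}$, the vertex $x$ has green degree (equivalently, red degree in $\overline{G}$) at least three, and a singleton component $\{w\}$ of $\overline{G} - \{x,y\}$ would force $w \in V_r$ and $xw$ to be red, contradicting the other half of the hypothesis on $x$. The step I expect to be the main obstacle is getting the counting right for the singleton-component case: one must verify that, for $|V(G)| > 10$, the only obstruction to partitioning the components of $G - \{x,y\}$ into two pieces each of size at least two is having exactly two components with a singleton among them. Every other configuration reduces directly to Proposition~\ref{bound_10_first}.
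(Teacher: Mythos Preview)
Your proof is correct and is essentially an expanded version of the paper's argument: the paper's one-line proof observes that every vertex lies in a green $2$-cut or a red $2$-cut and then invokes Proposition~\ref{bound_10_first}, which is exactly the content of your contrapositive argument together with your singleton-component analysis. Your case split and the counting that forces a singleton component are precisely the details the paper leaves implicit.
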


\begin{proof}
Since every vertex of $G$ is in either a red 2-cut or a green 2-cut, the lemma follows by Proposition \ref{bound_10_first}.
\end{proof}

\begin{lemma}
\label{V_r empty}
Let $G$ be a graph in $\mathcal{G}$ such that $|V(G)| > 10$. Then neither $V_g$ nor $V_r$ is empty.
\end{lemma}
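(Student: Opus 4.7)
Since $\mathcal{G}$ is closed under complementation and swapping $G$ with $\overline{G}$ interchanges $V_g$ and $V_r$, it suffices to prove $V_g\neq\emptyset$; the dual conclusion $V_r\neq\emptyset$ then follows on applying the result to $\overline{G}$. So I will suppose, for a contradiction, that $V_g=\emptyset$ with $|V(G)|>10$. Lemma~\ref{vertices_outside_V_g_union_V_r_second} with $|V_g|=0$ immediately gives $11\le 3|V_r|$, hence $|V_r|\ge 4$, and Corollary~\ref{induced_on_V_g_and_V_r} tells us that $V_r$ induces a complete green subgraph.

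The first step is to locate a green $2$-cut $\{g_1,g_2\}$ of $G$ with $g_1,g_2\in V_r$. Corollary~\ref{only_one_case_actually} guarantees some $2$-cut $\{g_1,g_2\}$ of $G$. If some component of $G-\{g_1,g_2\}$ were a single vertex $a$, then $a$ would have green-degree two and thus lie in $V_g$, contradicting our assumption; hence the components partition into subgraphs $A,B$ with $|A|,|B|\ge 2$. Proposition~\ref{bound_10_first} then forces both $g_1$ and $g_2$ to have red-degree at most two, and combined with the lower bound of two coming from $2$-connectedness of $\overline{G}$, both lie in $V_r$.

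The green-clique structure of $V_r$ now does the work. Since $V_r-\{g_1,g_2\}$ is a non-empty green clique of size at least two, it is green-connected and so lies inside one component of $G-\{g_1,g_2\}$, say $A$. Pick $v\in V_r-\{g_1,g_2\}\subseteq A$. Because $A$ and $B$ are separated in $G-\{g_1,g_2\}$, $v$ has no green neighbour in $B$, yet $v$ has red-degree only two; hence $B$ is contained in the two-element red neighbourhood of $v$, and combined with $|B|\ge 2$ this forces $|B|=2$, say $B=\{r_1,r_2\}$, with $\{r_1,r_2\}$ equal to the red neighbourhood of $v$. The same reasoning applied to a second vertex $v'\in V_r-\{g_1,g_2\}$ (which exists because $|V_r|\ge 4$) shows that $\{r_1,r_2\}$ is also the red neighbourhood of $v'$.

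To finish, I will turn to $\overline{G}$ and use the red $2$-cut $\{r_1,r_2\}$. Since $V_r$ is green-complete, $v$ and $v'$ are not red-adjacent, so each is isolated in $\overline{G}-\{r_1,r_2\}$, yielding at least two singleton components. Partitioning the components of $\overline{G}-\{r_1,r_2\}$ into $A'=\{v,v'\}$ of size two and $B'$ of size $|V(G)|-4\ge 7$, Proposition~\ref{bound_10_first} applied to $\overline{G}$ forces the green-degree of $r_1$ in $G$ to be at most two, so $r_1\in V_g$, contradicting $V_g=\emptyset$. The delicate point, I expect, is recognising that since the two vertices of the green $2$-cut lie in $V_r$ they are themselves green-adjacent, so Lemmas~\ref{both_greater}--\ref{contrasting_colours} cannot be invoked directly; the contradiction must instead come from the forced collision of red neighbourhoods among distinct members of the green clique $V_r$.
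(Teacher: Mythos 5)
Your proof is correct, and while it travels the same terrain as the paper's proof, it closes in a genuinely different way. Both arguments open identically: assume one of the two sets is empty, use Lemma~\ref{vertices_outside_V_g_union_V_r_second} to get that the other set has at least four vertices, note that the emptiness assumption rules out singleton components of the graph minus a $2$-cut (so the partition hypothesis holds), and apply Proposition~\ref{bound_10_first} to force both cut vertices to have degree two in the appropriate colour. From there the paper (working with $V_r=\emptyset$ and a red $2$-cut $T\subseteq V_g$) analyses the components of $\overline{G}-T$ directly: it shows there are exactly two, the smaller being the common green neighbourhood $\{\alpha,\beta\}$ of every vertex of $V_g-T$, then runs a second count to get $|V_g-T|\ge 3$ and concludes with the contradiction that $G-v$ and $\overline{G}-v$ are both $2$-connected for such a twin $v$. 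You instead invoke Corollary~\ref{induced_on_V_g_and_V_r} --- which the paper's proof of this lemma never uses --- to localize the green clique $V_r-\{g_1,g_2\}$ on one side $A$ of the cut, forcing the other side to be exactly the common red neighbourhood $\{r_1,r_2\}$ of two clique vertices $v,v'$; your contradiction then comes from a second application of Proposition~\ref{bound_10_first}, to $\overline{G}$ at the red $2$-cut $\{r_1,r_2\}$ with the two isolated twins $\{v,v'\}$ grouped as one part (the proposition only requires each \emph{part}, not each component, to have two vertices, so this is legitimate), forcing $r_1\in V_g=\emptyset$. Midway the two proofs reach dual versions of the same structural picture --- degree-two cut vertices and a two-vertex opposite side equal to a common neighbourhood of twins --- but your ending is arguably cleaner: it needs only two twins rather than three, skips the paper's second counting step, and replaces the paper's final assertion (stated without detail) that $G-v$ and $\overline{G}-v$ are both $2$-connected with a reuse of Proposition~\ref{bound_10_first} as the engine. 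Two small remarks: your reduction to the single case $V_g=\emptyset$ via complementation is valid, since $\mathcal{G}$ is complement-closed and complementation swaps $V_g$ and $V_r$ (the paper instead reruns its argument with the colours interchanged, which amounts to the same thing); and your Step 1 conclusion that $g_1,g_2\in V_r$ is never actually needed afterwards --- only $|V_r|\ge 4$ and the partition into $A$ and $B$ get used --- so it could be dropped, though it is harmless and correctly derived.
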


\begin{proof}
 It suffices to show that $V_r$ is non-empty. Assume the contrary. By Lemma~ \ref{vertices_outside_V_g_union_V_r}, every vertex outside $V_g$ has a green neighbour in $V_g$. Thus, by Lemma~ \ref{vertices_outside_V_g_union_V_r_second}, $11 \leq 3|V_g|$, so $|V_g| \geq 4$. Let $\{r_1, r_2\}$ be a red 2-cut $T$. Since $V_r$ is empty, applying Proposition \ref{bound_10_first} to $\overline{G}$ gives that $T$ is contained in $V_g$. Let $v$ be a vertex in $V_g-T$ and let $\alpha$ and $\beta$ be the two green neighbours of $v$. Consider the graph $\overline{G} - T$. Note that $\overline{G} - T$ is disconnected and $v$ is incident to all the vertices in this graph except $\alpha$ and $ \beta$. Let $X$ be the component of $\overline{G} - T$ containing $v$. Since the red graph $\overline{G}$ has no degree-two vertices, $\overline{G} - T$ has exactly two components. The second component must have $\{\alpha, \beta\}$ as its vertex set. 
 
 Let $w$ be a vertex in $V_g- T - v$. As $w$ is in a different component of $\overline{G} - T$ from $\alpha$ and $\beta$, both $w \alpha$ and $w \beta$ are green edges. Since $w$ has green degree two, it follows that $\{\alpha, \beta \}$ is the green neighbourhood of each vertex in $V_g - T$. By Lemma~ \ref{vertices_outside_V_g_union_V_r}, each vertex in $V(G)-V_g-\{\alpha, \beta\}$ has a green neighbour in $V_g$. This neighbour is not in $V_g-T$, so it is in $T$. Thus $|V(G) - V_g - \{\alpha, \beta\}| \leq 4$. But $|V(G)| > 10$, so $|V_g - T| \geq 3$. Therefore $G-v$ and $\overline{G}-v$ are both $2$-connected, a contradiction. We conclude that $V_r$ is non-empty. 
\end{proof}

We are now ready to prove the second main result of the paper.

\begin{proof}[Proof of Theorem \ref{main}]

Assume that $G \in \mathcal{G}$ and $|V(G)| > 10$. Without loss of generality, let $|V_g| \leq |V_r|$. By Lemma \ref{vertices_outside_V_g_union_V_r}, every vertex in $V(G)-(V_g \cup V_r)$ either has a green neighbour in $V_g$ or a red neighbour in $V_r$. By Lemmas \ref{V_r empty} and \ref{V_g_or_V_r_has_bound_four}, $1 \leq |V_g| \leq 4$.
 Suppose $|V_g| = 4$. Then, by Lemma \ref{V_g_or_V_r_has_bound_four}, $|V_r| = 4$. Lemma~ \ref{vertices_outside_V_g_union_V_r_second} implies that $V(G) - (V_g\cup V_r)$ is empty. Therefore $|V(G)| =8$, a contradiction.

Next we assume that $|V_g| = 3$. Then every vertex in $V_r$ is a green neighbour of at least one vertex in $V_g$. Thus $|V_r| \leq 6$ as there are exactly six green edges incident to vertices in $V_g$. Then, by Lemma~ \ref{vertices_outside_V_g_union_V_r_second}, as $|V_g|=3$, we deduce that $11 \leq 3|V_g|$, a contradiction.

Now suppose that $|V_g| = 2$. Then, by Lemma~ \ref{vertices_outside_V_g_union_V_r_second}, $|V_r| \geq 5$. Let $V_g= \{u,v\}$. Since there are only four green edges meeting $V_g$, there is a vertex $w$ in $V_r$ whose red neighbours are $u$ and $v$. Thus $\{u,v\}$ is a red 2-cut. Suppose that $V_r -\{w\}$ contains at least two vertices that are joined to both $u$ and $v$ by red edges. Then one can check that both $G-w$ and $\overline{G}-w$ are $2$-connected, a contradiction. Thus $V_r$ has at most two vertices that are joined to both $u$ and $v$ by red edges. Therefore $|V_r| \leq 6$ since $V_g$ meets only four green edges. Assume that $|V_r| = 6$. Then all the green neighbours of $u$ and $v$ are in $V_r$ and are distinct. Since $|V(G)| \geq 11$, we see that $|V(G) - (V_g\cup V_r)| \geq 3$.  Let $\{w,x\}$ be the vertices in $V_r$ having both $u$ and $v$ as their red neighbours. All the vertices in $V_r - \{w,x\}$ have one red neighbour in $V_g$. Since $|V(G) - (V_g\cup V_r)| \geq 3$, Lemma~ \ref{vertices_outside_V_g_union_V_r} implies that each vertex in $V(G) - (V_g\cup V_r)$ has at most two red neighbours in $V_r - \{w,x\}$ and thus has at least two green neighbours in $V_r - \{w,x\}$. Thus $G-w$ and $\overline{G} -w$ are $2$-connected, a contradiction. We may now assume that $|V_r| = 5$ and $|V(G) - (V_g\cup V_r)| \geq 4$. By Lemma \ref{vertices_outside_V_g_union_V_r_second}, $|V(G) - (V_g \cup V_r)| = 4$. Thus, as equality holds in \ref{bound_S_new}, every vertex in $V(G) - (V_g \cup V_r)$ has at most one red neighbour in $V_r - w$ and so has at least three green neighbours in $V_r-w$. Therefore we again have that both $G-w$ and $\overline{G}-w$ are $2$-connected, a contradiction.

Finally, assume that $|V_g| = 1$. By Lemma~ \ref{vertices_outside_V_g_union_V_r_second}, $|V_r| \geq 4$. Let $V_g= \{v\}$ and let $\alpha \in V_r$ be a red neighbour of $v$. First, we show that $V_r$ does not contain a green 2-cut that contains $\alpha$. Assume that $\{\alpha, \beta\}$ is a green 2-cut where $\{\alpha, \beta\} \subseteq V_r$. Then $G - \{\alpha, \beta\}$ has a component $X$ that contains $V_r - \{\alpha, \beta\}$ and all but at most two vertices of $V(G)-\{\alpha, \beta\}$. Let $Y$ be a component of $G - \{\alpha, \beta\}$ different from $X$. Then $|V(Y)| \leq 2$. Suppose $|V(Y)|=1$. Then the vertex in $Y$ must be in $V_g$, so it is $v$. This is a contradiction since $\alpha v$ is red. Thus $|V(Y)|=2$ and $G - \{\alpha, \beta\}$ has exactly two components. Then $|V(X)| \geq 7$. Let $x$ be a vertex in $X$ such that $x$ is not a red neighbour of $\alpha$ or $\beta$, and $X - \{x\}$ contains at least two vertices of $V_r - \{\alpha, \beta\}$. Since each vertex of $V_r - \{\alpha, \beta\}$ has its two red neighbours in $Y$ and so is adjacent in $G$ to every vertex of $X$, it follows that $G-x$ is $2$-connected. Moreover, by Lemma \ref{complement_connected_observation}, $\overline{G}-x$ is $2$-connected, a contradiction. We conclude that $V_r$ does not have a green $2$-cut containing $\alpha$.

Next, we show that no green $2$-cut contains $\alpha$. Assume that $\{\alpha, z\}$ is a green 2-cut. Then $z \notin V_r$. By Proposition $\ref{bound_10_first}$, $G - \{\alpha, z\}$ has a single-vertex component $Y$. Since the vertex in $Y$ has green degree two, $Y = \{v\}$. Thus $\alpha v$ is green, a contradiction. We conclude that deleting from $G$ any red neighbour of $v$ in $V_r$ leaves a green graph that is still $2$-connected.

To complete the proof of the theorem, we show that $v$ has a red neighbour in $V_r$ whose deletion from $\overline{G}$ leaves a $2$-connected graph, thus arriving at a contradiction. Let $\beta$ be a red neighbour of $v$ in $V_r - \{\alpha\}$. If $\alpha$ and $\beta$ have the same red neighbourhood, say $\{x,v\}$, then $\{x,v\}$ is a red 2-cut and we obtain a contradiction by applying Proposition $\ref{bound_10_first}$ to $\overline{G}$. Thus $\alpha$ and $\beta$ have distinct red neighbourhoods, $\{x,v\}$ and $\{y,v\}$, respectively. Note that if $xv$ is red, then $\overline{G} - \alpha$ is $2$-connected. Thus we may assume that both $xv$ and $yv$ are green. This implies $\gamma v$ is red for each $\gamma$ in $V_r - \{\alpha, \beta\}$ since $v$ has green degree two. Thus, for some fixed $\gamma$ in $V_r - \{\alpha, \beta\}$, the other red neighbour, $z$, of $\gamma$ is distinct from $x$ and $y$. Since $vz$ is red and $\gamma$ has red degree two, we see that $\overline{G}-\gamma$ is $2$-connected, a contradiction.
\end{proof}

\section{Appendix}

We implemented the algorithm given in this section using SageMath~\cite{sage} and provide a list of all graphs in $\mathcal{G}$ up to complementation. The graphs in this section are drawn using SageMath.

\hspace{5mm}

{\bf Graphs on six vertices.} There are two graphs on six vertices in $\mathcal{G}$, the graph in Figure \ref{six} and its complement.

\begin{figure}[htbp]
\includegraphics[scale = 0.3]{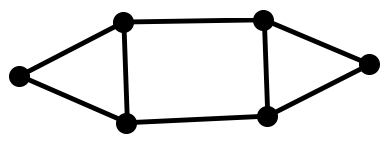}
\caption{A $6$-vertex graph in $\mathcal{G}$.}
\label{six}
\end{figure}

{\bf Graphs on seven vertices.} There are sixteen graphs on seven vertices in $\mathcal{G}$, the graphs in Figure \ref{seven} and their complements.

\begin{figure}[htbp]
\centering
\begin{minipage}{.20\linewidth}
  \includegraphics[scale=0.22]{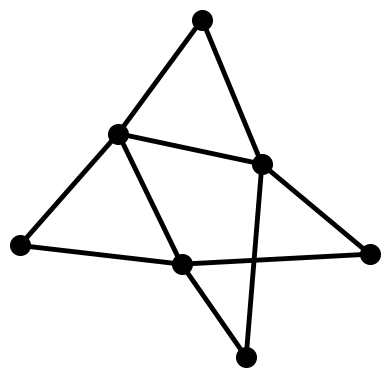}
\end{minipage}
\hspace{.01\linewidth}
\begin{minipage}{.20\linewidth}
  \includegraphics[scale=0.22]{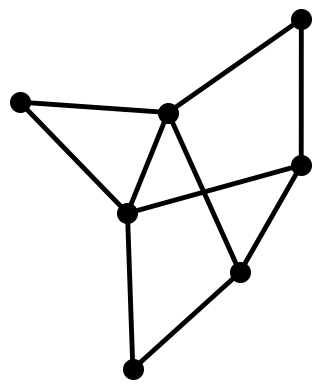}
\end{minipage} \hspace{.01\linewidth}
\begin{minipage}{.20\linewidth}
  \includegraphics[scale=0.22]{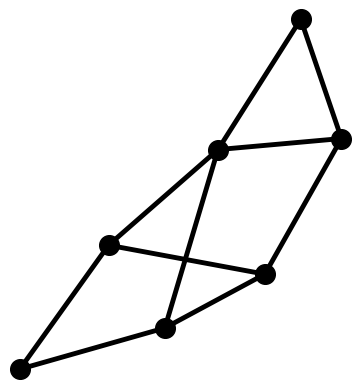}
\end{minipage}
\hspace{.01\linewidth}
\begin{minipage}{.20\linewidth}
\includegraphics[scale=0.22]{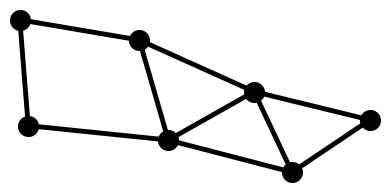}
\end{minipage}

\begin{minipage}{.20\linewidth}
  \includegraphics[scale=0.22]{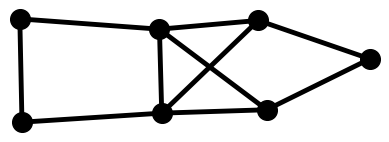}
\end{minipage}
\hspace{.01\linewidth}
\begin{minipage}{.20\linewidth}
  \includegraphics[scale=0.22]{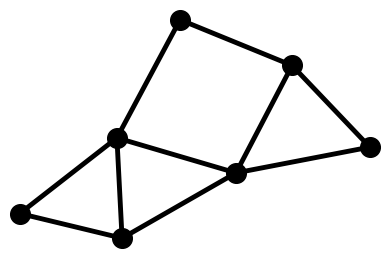}
\end{minipage} \hspace{.01\linewidth}
\begin{minipage}{.20\linewidth}
  \includegraphics[scale=0.22]{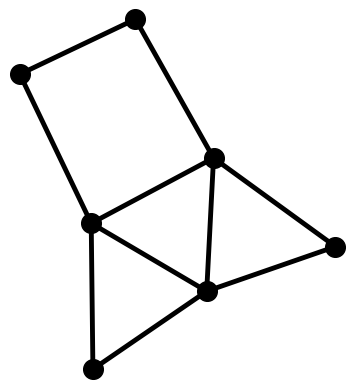}
\end{minipage}
\hspace{.01\linewidth}
\begin{minipage}{.20\linewidth}
\includegraphics[scale=0.22]{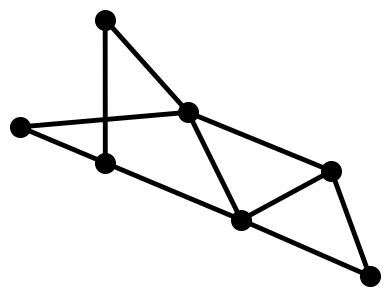}
\end{minipage}

\caption{Graphs on seven vertices in $\mathcal{G}$.}
\label{seven}

\end{figure}

{\bf Graphs on eight vertices.} There are 87 graphs on eight vertices in $\mathcal{G}$, of which five are self-complementary. Figure \ref{eight_vertices_self} shows these self-complementary graphs. Figure \ref{eight_vertices} shows  41 non-self-complementary graphs that, with their complements, are the remaining $8$-vertex graphs in $\mathcal{G}$.

\begin{figure}[htbp]
\centering
\begin{minipage}{.25\linewidth}
  \includegraphics[scale=0.25]{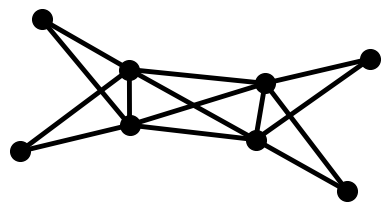}
\end{minipage}
\hspace{.02\linewidth}
\begin{minipage}{.25\linewidth}
  \includegraphics[scale=0.25]{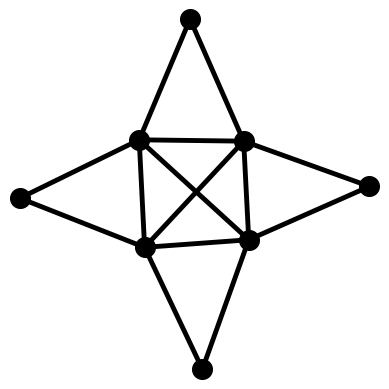}
\end{minipage} \hspace{.02\linewidth}
\begin{minipage}{.25\linewidth}
  \includegraphics[scale=0.25]{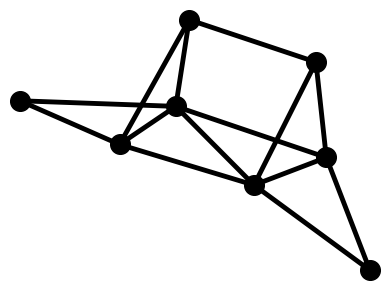}
\end{minipage}
\hspace{.02\linewidth}
\begin{minipage}{.25\linewidth}
\includegraphics[scale=0.25]{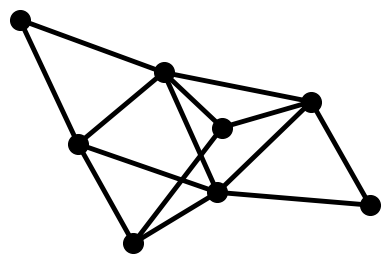}
\end{minipage} 
\hspace{.02\linewidth}
\begin{minipage}{.25\linewidth}
  \includegraphics[scale=0.25]{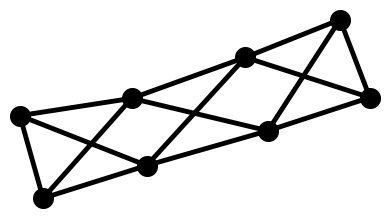}
\end{minipage}

\caption{Self-complementary graphs on eight vertices in $\mathcal{G}$.}
\label{eight_vertices_self}

\end{figure}

\begin{figure}[htbp]
\centering

\begin{minipage}{.17\linewidth}
  \includegraphics[scale=0.22]{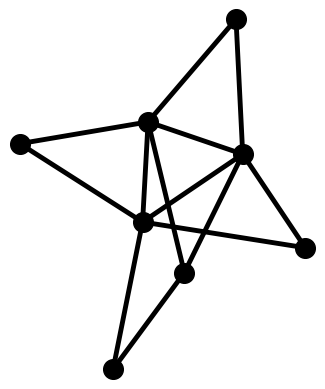}
\end{minipage} \hspace{.01\linewidth}
\begin{minipage}{.17\linewidth}
  \includegraphics[scale=0.22]{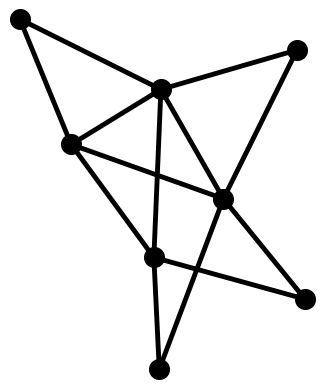}
\end{minipage}
\hspace{.01\linewidth}
\begin{minipage}{.17\linewidth}
\includegraphics[scale=0.22]{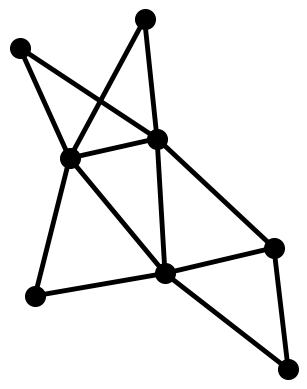}
\end{minipage}
\hspace{.01\linewidth}
\begin{minipage}{.17\linewidth}
\includegraphics[scale=0.22]{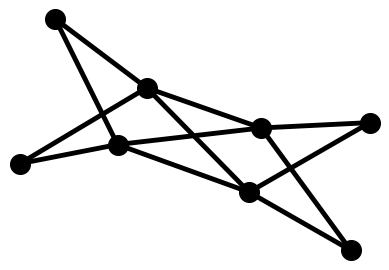}
\end{minipage}
\begin{minipage}{.17\linewidth}
  \includegraphics[scale=0.22]{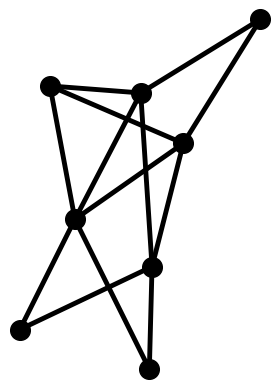}
\end{minipage}
\hspace{.01\linewidth}

\begin{minipage}{.17\linewidth}
  \includegraphics[scale=0.22]{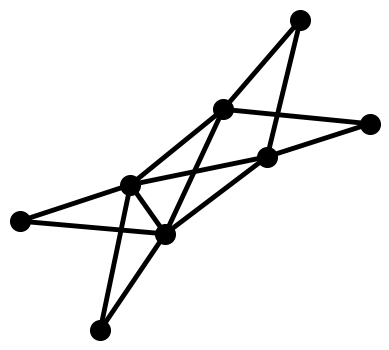}
\end{minipage} \hspace{.01\linewidth}
\begin{minipage}{.17\linewidth}
  \includegraphics[scale=0.22]{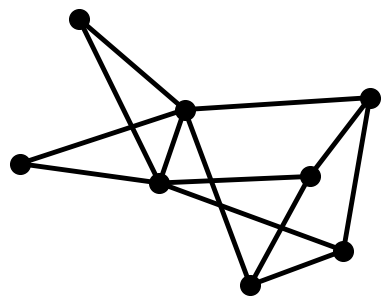}
\end{minipage}
\hspace{.01\linewidth}
\begin{minipage}{.17\linewidth}
\includegraphics[scale=0.22]{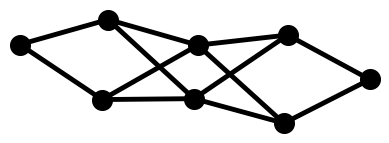}
\end{minipage}
\hspace{.01\linewidth}
\begin{minipage}{.17\linewidth}
\includegraphics[scale=0.22]{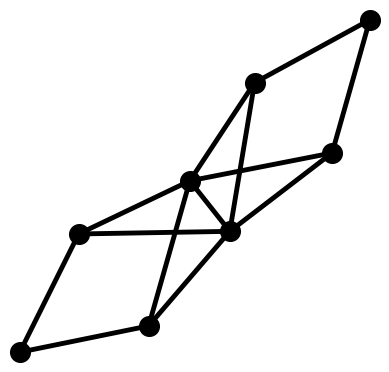}
\end{minipage}
\begin{minipage}{.17\linewidth}
  \includegraphics[scale=0.22]{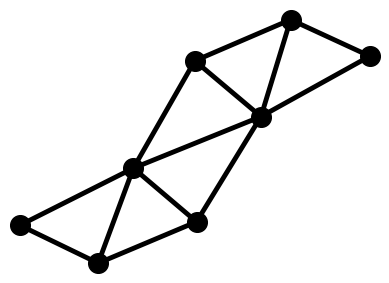}
\end{minipage}
\hspace{.01\linewidth}

\begin{minipage}{.17\linewidth}
  \includegraphics[scale=0.22]{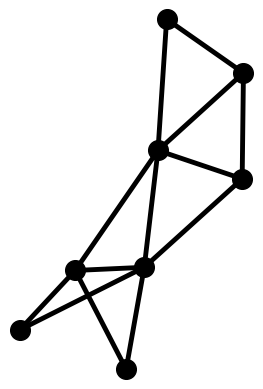}
\end{minipage} \hspace{.01\linewidth}
\begin{minipage}{.17\linewidth}
  \includegraphics[scale=0.22]{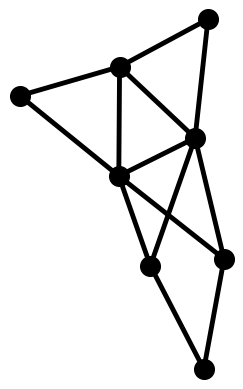}
\end{minipage}
\hspace{.01\linewidth}
\begin{minipage}{.17\linewidth}
\includegraphics[scale=0.22]{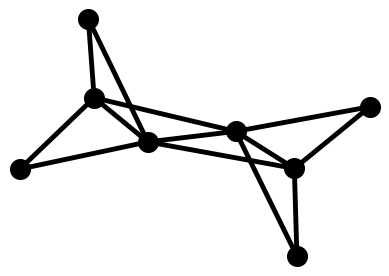}
\end{minipage}
\hspace{.01\linewidth}
\begin{minipage}{.17\linewidth}
\includegraphics[scale=0.22]{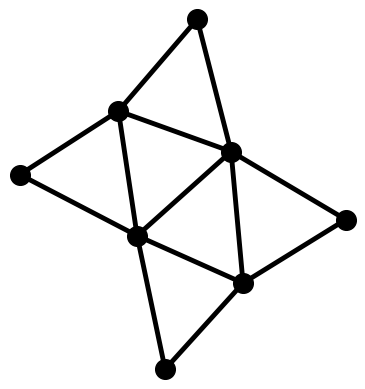}
\end{minipage}
\begin{minipage}{.17\linewidth}
  \includegraphics[scale=0.22]{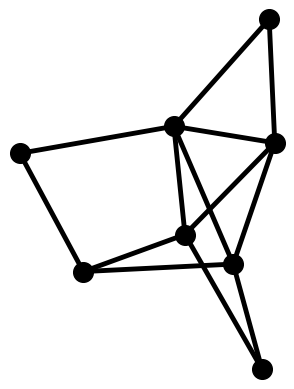}
\end{minipage}
\hspace{.01\linewidth}

\begin{minipage}{.17\linewidth}
  \includegraphics[scale=0.22]{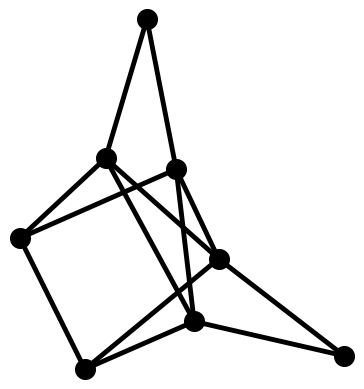}
\end{minipage} \hspace{.01\linewidth}
\begin{minipage}{.17\linewidth}
  \includegraphics[scale=0.22]{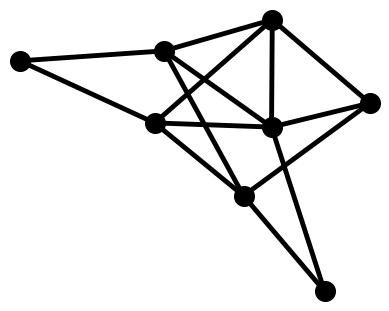}
\end{minipage}
\hspace{.01\linewidth}
\begin{minipage}{.17\linewidth}
\includegraphics[scale=0.22]{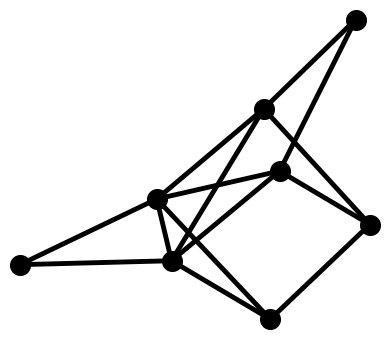}
\end{minipage}
\hspace{.01\linewidth}
\begin{minipage}{.17\linewidth}
\includegraphics[scale=0.22]{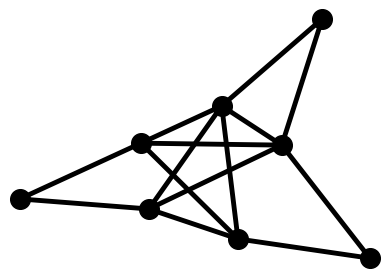}
\end{minipage}
\begin{minipage}{.17\linewidth}
  \includegraphics[scale=0.22]{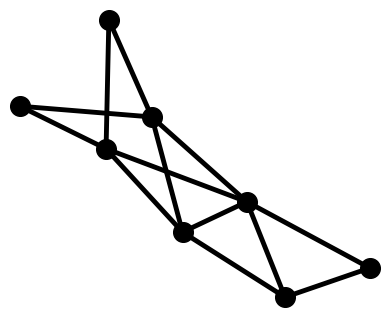}
\end{minipage}
\hspace{.01\linewidth}

\begin{minipage}{.17\linewidth}
  \includegraphics[scale=0.22]{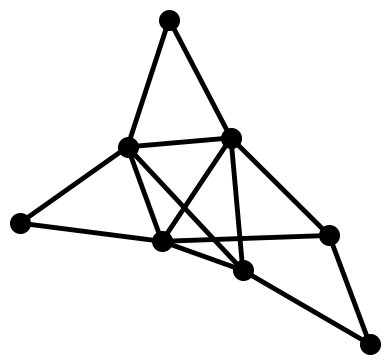}
\end{minipage} \hspace{.01\linewidth}
\begin{minipage}{.17\linewidth}
  \includegraphics[scale=0.22]{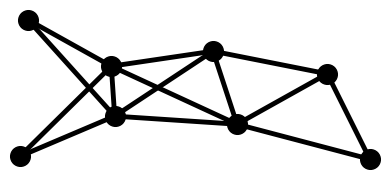}
\end{minipage}
\hspace{.01\linewidth}
\begin{minipage}{.17\linewidth}
\includegraphics[scale=0.22]{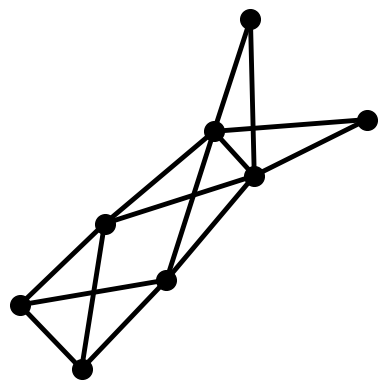}
\end{minipage}
\hspace{.01\linewidth}
\begin{minipage}{.17\linewidth}
\includegraphics[scale=0.22]{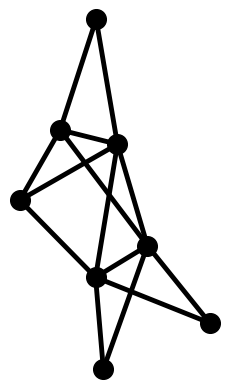}
\end{minipage}
\begin{minipage}{.17\linewidth}
  \includegraphics[scale=0.22]{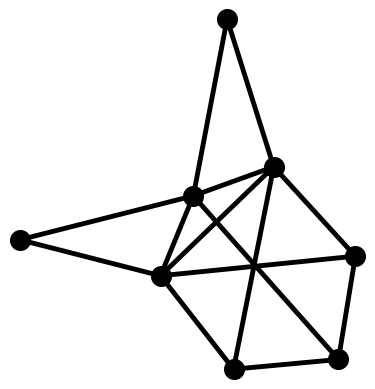}
\end{minipage}
\hspace{.01\linewidth}

\begin{minipage}{.17\linewidth}
  \includegraphics[scale=0.22]{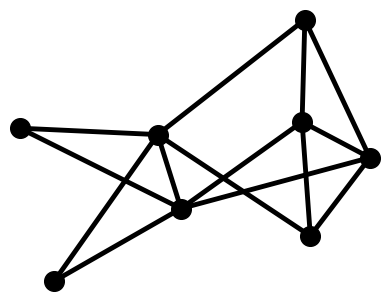}
\end{minipage} \hspace{.01\linewidth}
\begin{minipage}{.17\linewidth}
  \includegraphics[scale=0.22]{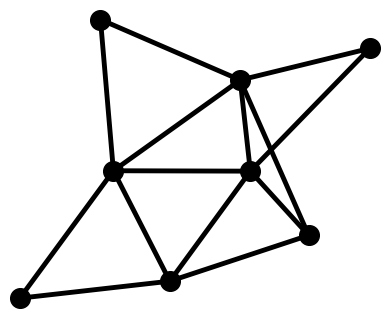}
\end{minipage}
\hspace{.01\linewidth}
\begin{minipage}{.17\linewidth}
\includegraphics[scale=0.22]{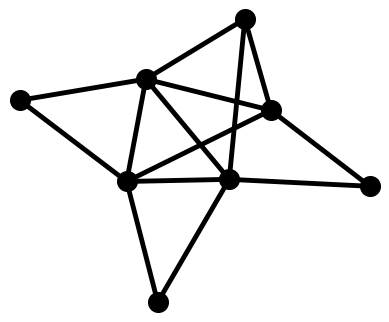}
\end{minipage}
\hspace{.01\linewidth}
\begin{minipage}{.17\linewidth}
\includegraphics[scale=0.22]{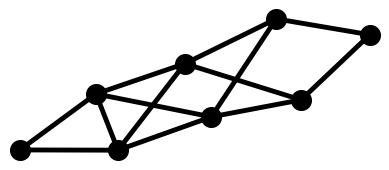}
\end{minipage}
\begin{minipage}{.17\linewidth}
  \includegraphics[scale=0.22]{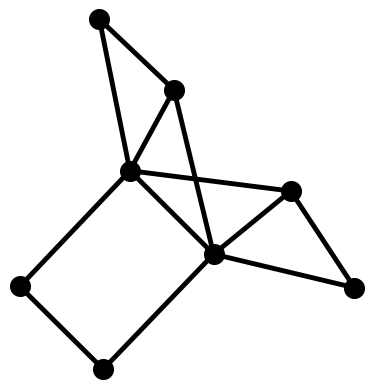}
\end{minipage}
\hspace{.01\linewidth}

\begin{minipage}{.17\linewidth}
  \includegraphics[scale=0.22]{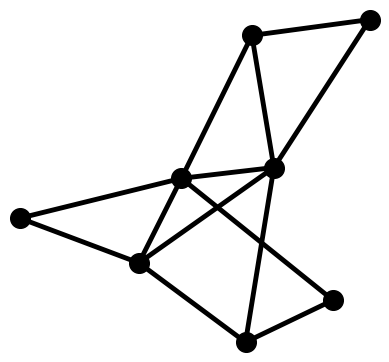}
\end{minipage} \hspace{.01\linewidth}
\begin{minipage}{.17\linewidth}
  \includegraphics[scale=0.22]{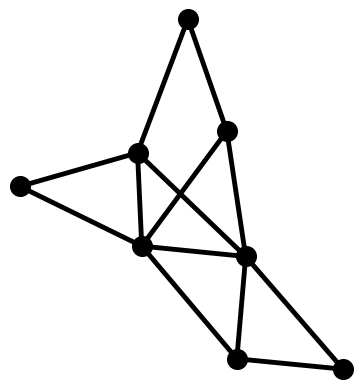}
\end{minipage}
\hspace{.01\linewidth}
\begin{minipage}{.17\linewidth}
\includegraphics[scale=0.22]{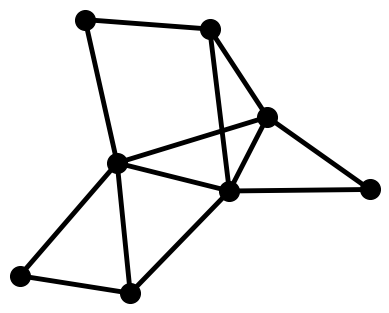}
\end{minipage}
\hspace{.01\linewidth}
\begin{minipage}{.17\linewidth}
\includegraphics[scale=0.22]{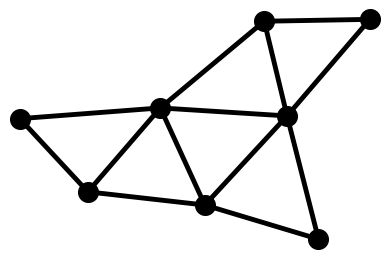}
\end{minipage}
\begin{minipage}{.17\linewidth}
  \includegraphics[scale=0.22]{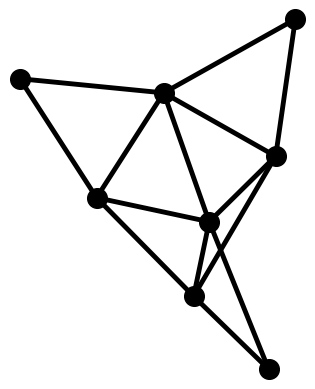}
\end{minipage}
\hspace{.01\linewidth}

\begin{minipage}{.17\linewidth}
  \includegraphics[scale=0.22]{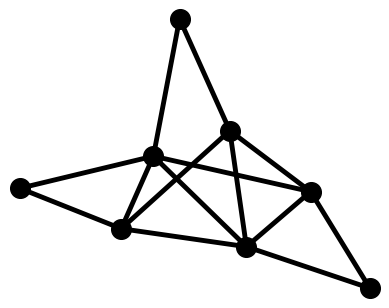}
\end{minipage} \hspace{.01\linewidth}
\begin{minipage}{.17\linewidth}
  \includegraphics[scale=0.22]{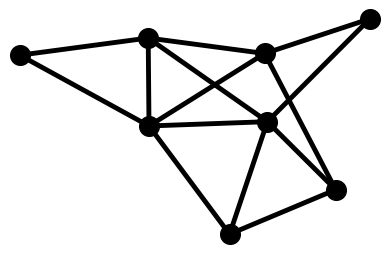}
\end{minipage}
\hspace{.01\linewidth}
\begin{minipage}{.17\linewidth}
\includegraphics[scale=0.22]{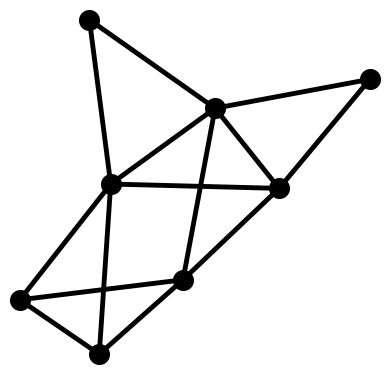}
\end{minipage}
\hspace{.01\linewidth}
\begin{minipage}{.17\linewidth}
\includegraphics[scale=0.22]{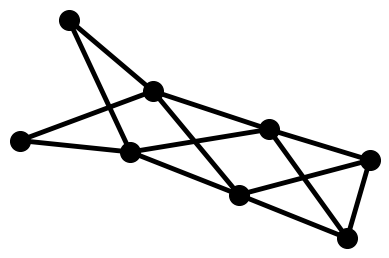}
\end{minipage}
\begin{minipage}{.17\linewidth}
  \includegraphics[scale=0.22]{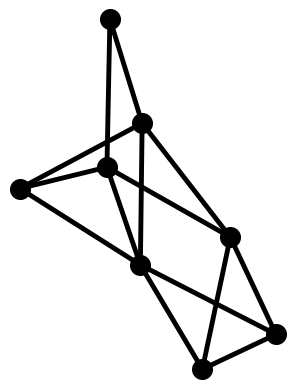}
\end{minipage}
\hspace{.01\linewidth}

\begin{minipage}{.17\linewidth}
  \includegraphics[scale=0.22]{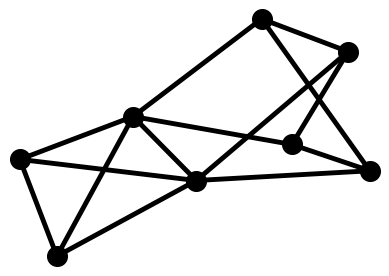}
\end{minipage}

\caption{Graphs on eight vertices in $\mathcal{G}$.}

\label{eight_vertices}

\end{figure}

{\bf Graphs on nine vertices.} There are 86 graphs on nine vertices in $\mathcal{G}$. These are the 43 graphs in Figure \ref{nine_vertices} and their complements.

\begin{figure}[htbp]
\centering

\begin{minipage}{.17\linewidth}
  \includegraphics[scale=0.22]{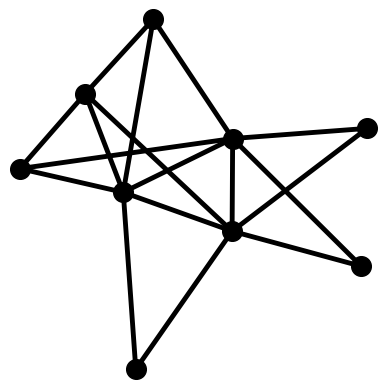}
\end{minipage} \hspace{.01\linewidth}
\begin{minipage}{.17\linewidth}
  \includegraphics[scale=0.22]{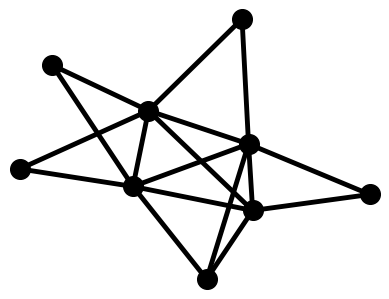}
\end{minipage}
\hspace{.01\linewidth}
\begin{minipage}{.17\linewidth}
\includegraphics[scale=0.22]{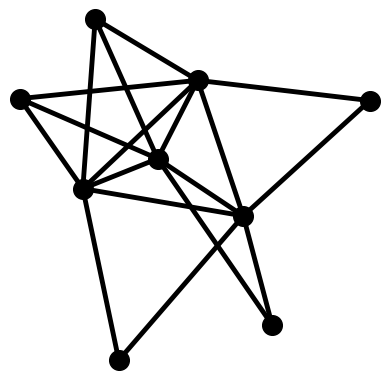}
\end{minipage}
\hspace{.01\linewidth}
\begin{minipage}{.17\linewidth}
\includegraphics[scale=0.22]{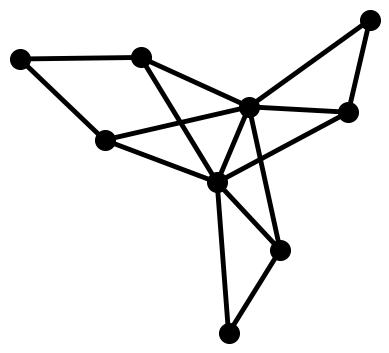}
\end{minipage}
\begin{minipage}{.17\linewidth}
  \includegraphics[scale=0.22]{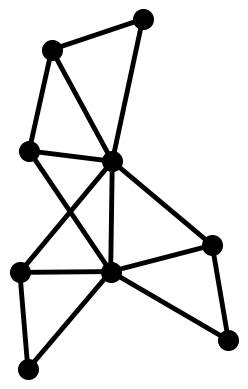}
\end{minipage}
\hspace{.01\linewidth}

\begin{minipage}{.17\linewidth}
  \includegraphics[scale=0.22]{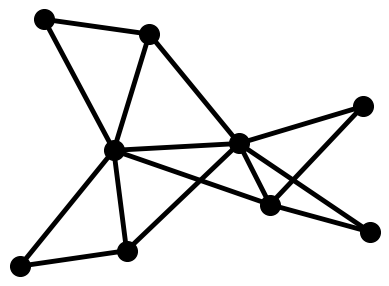}
\end{minipage} \hspace{.01\linewidth}
\begin{minipage}{.17\linewidth}
  \includegraphics[scale=0.22]{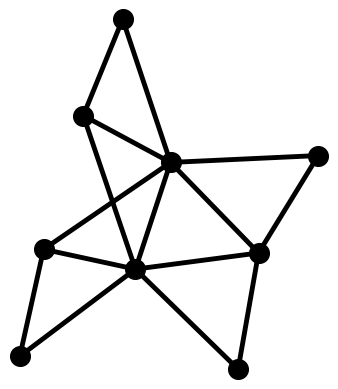}
\end{minipage}
\hspace{.01\linewidth}
\begin{minipage}{.17\linewidth}
\includegraphics[scale=0.22]{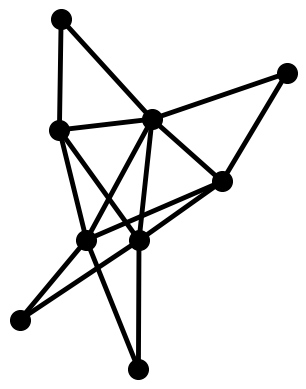}
\end{minipage}
\hspace{.01\linewidth}
\begin{minipage}{.17\linewidth}
\includegraphics[scale=0.22]{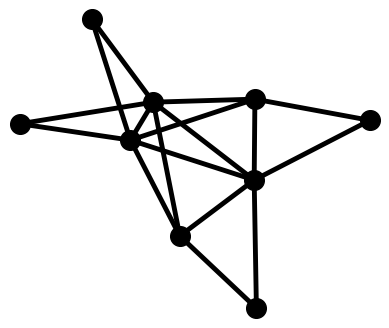}
\end{minipage}
\begin{minipage}{.17\linewidth}
  \includegraphics[scale=0.22]{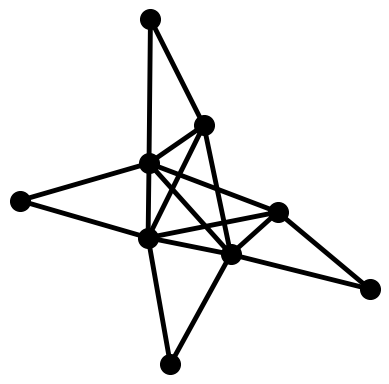}
\end{minipage}
\hspace{.01\linewidth}

\begin{minipage}{.17\linewidth}
  \includegraphics[scale=0.22]{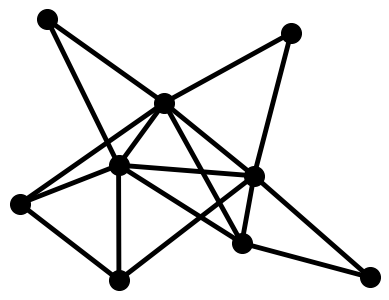}
\end{minipage} \hspace{.01\linewidth}
\begin{minipage}{.17\linewidth}
  \includegraphics[scale=0.22]{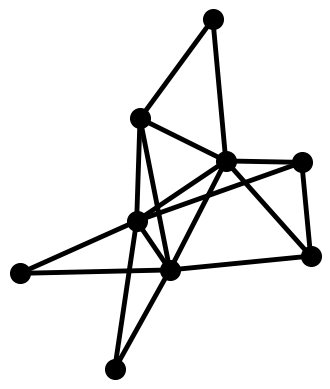}
\end{minipage}
\hspace{.01\linewidth}
\begin{minipage}{.17\linewidth}
\includegraphics[scale=0.22]{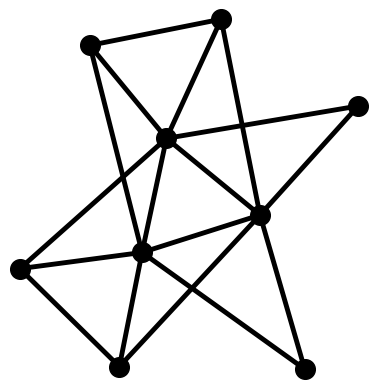}
\end{minipage}
\hspace{.01\linewidth}
\begin{minipage}{.17\linewidth}
\includegraphics[scale=0.22]{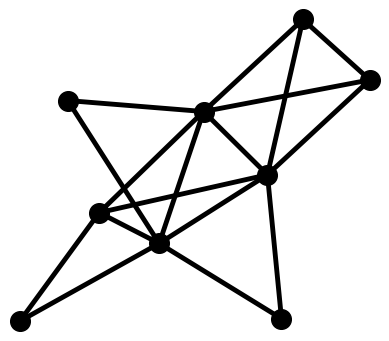}
\end{minipage}
\begin{minipage}{.17\linewidth}
  \includegraphics[scale=0.22]{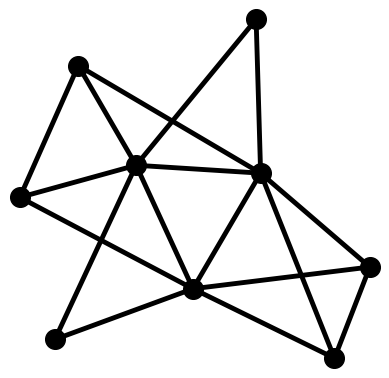}
\end{minipage}
\hspace{.01\linewidth}

\begin{minipage}{.17\linewidth}
  \includegraphics[scale=0.22]{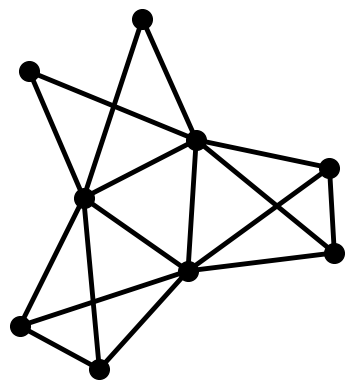}
\end{minipage} \hspace{.01\linewidth}
\begin{minipage}{.17\linewidth}
  \includegraphics[scale=0.22]{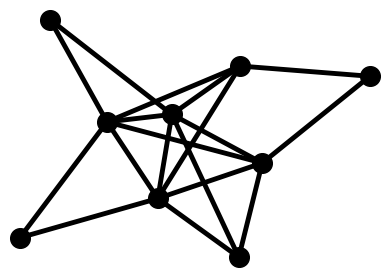}
\end{minipage}
\hspace{.01\linewidth}
\begin{minipage}{.17\linewidth}
\includegraphics[scale=0.22]{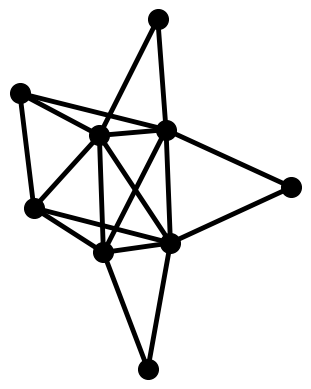}
\end{minipage}
\hspace{.01\linewidth}
\begin{minipage}{.17\linewidth}
\includegraphics[scale=0.22]{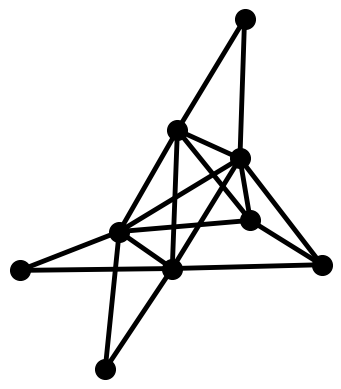}
\end{minipage}
\begin{minipage}{.17\linewidth}
  \includegraphics[scale=0.22]{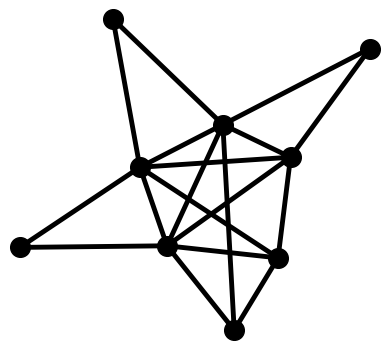}
\end{minipage}
\hspace{.01\linewidth}

\begin{minipage}{.17\linewidth}
  \includegraphics[scale=0.22]{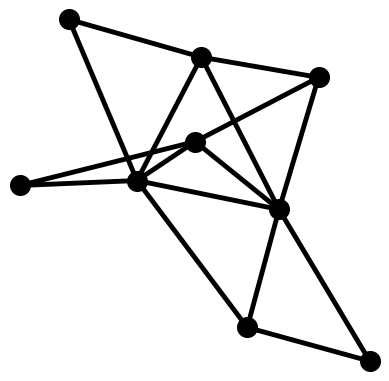}
\end{minipage} \hspace{.01\linewidth}
\begin{minipage}{.17\linewidth}
  \includegraphics[scale=0.22]{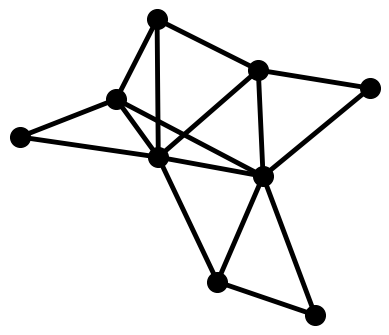}
\end{minipage}
\hspace{.01\linewidth}
\begin{minipage}{.17\linewidth}
\includegraphics[scale=0.22]{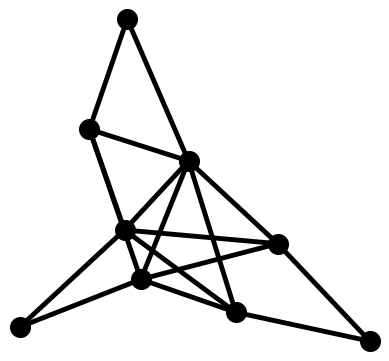}
\end{minipage}
\hspace{.01\linewidth}
\begin{minipage}{.17\linewidth}
\includegraphics[scale=0.22]{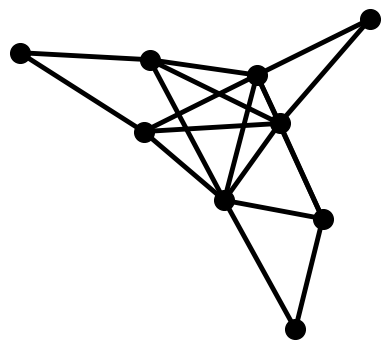}
\end{minipage}
\begin{minipage}{.17\linewidth}
  \includegraphics[scale=0.22]{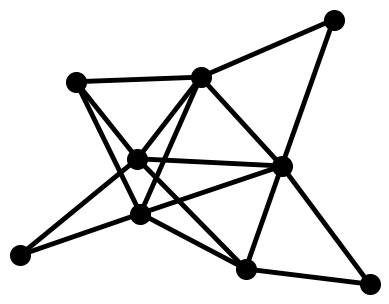}
\end{minipage}
\hspace{.01\linewidth}

\begin{minipage}{.17\linewidth}
  \includegraphics[scale=0.22]{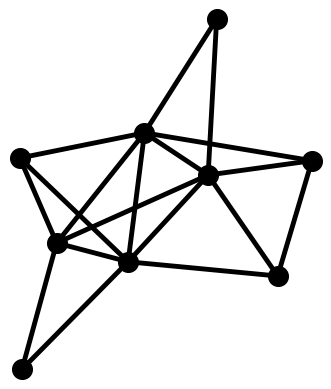}
\end{minipage} \hspace{.01\linewidth}
\begin{minipage}{.17\linewidth}
  \includegraphics[scale=0.22]{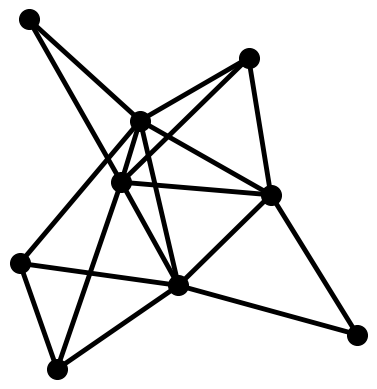}
\end{minipage}
\hspace{.01\linewidth}
\begin{minipage}{.17\linewidth}
\includegraphics[scale=0.22]{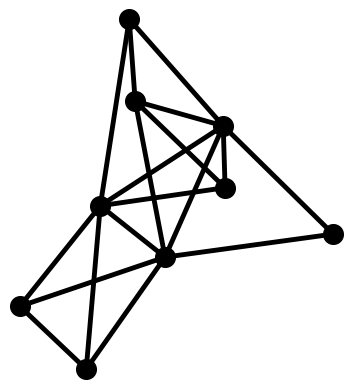}
\end{minipage}
\hspace{.01\linewidth}
\begin{minipage}{.17\linewidth}
\includegraphics[scale=0.22]{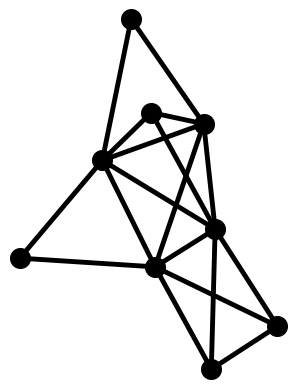}
\end{minipage}
\begin{minipage}{.17\linewidth}
  \includegraphics[scale=0.22]{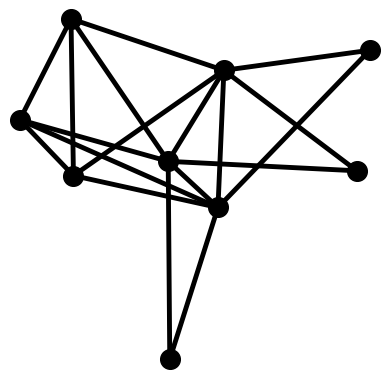}
\end{minipage}
\hspace{.01\linewidth}

\begin{minipage}{.17\linewidth}
  \includegraphics[scale=0.22]{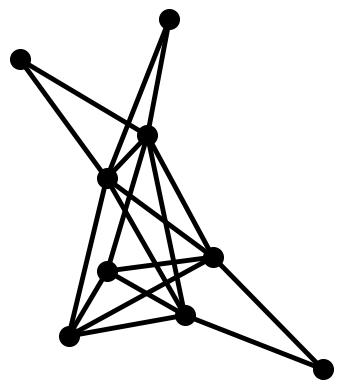}
\end{minipage} \hspace{.01\linewidth}
\begin{minipage}{.17\linewidth}
  \includegraphics[scale=0.22]{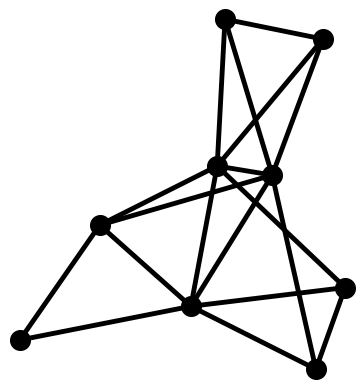}
\end{minipage}
\hspace{.01\linewidth}
\begin{minipage}{.17\linewidth}
\includegraphics[scale=0.22]{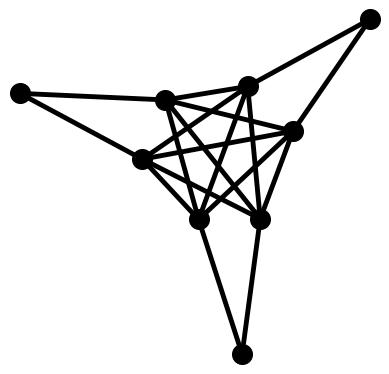}
\end{minipage}
\hspace{.01\linewidth}
\begin{minipage}{.17\linewidth}
\includegraphics[scale=0.22]{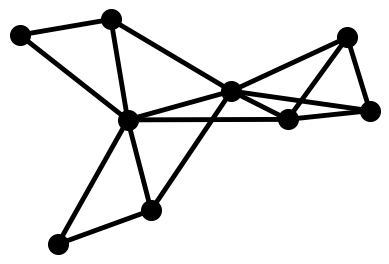}
\end{minipage}
\begin{minipage}{.17\linewidth}
  \includegraphics[scale=0.22]{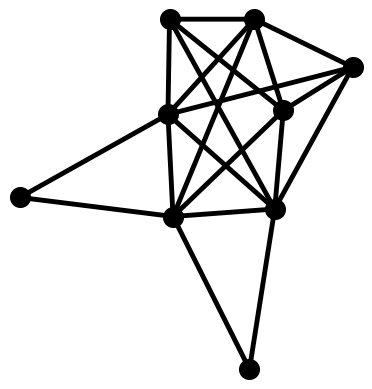}
\end{minipage}
\hspace{.01\linewidth}

\begin{minipage}{.17\linewidth}
  \includegraphics[scale=0.22]{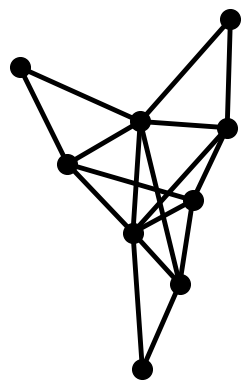}
\end{minipage} \hspace{.01\linewidth}
\begin{minipage}{.17\linewidth}
  \includegraphics[scale=0.22]{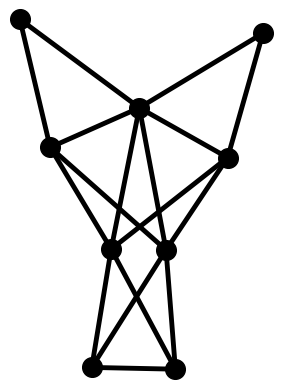}
\end{minipage}
\hspace{.01\linewidth}
\begin{minipage}{.17\linewidth}
\includegraphics[scale=0.22]{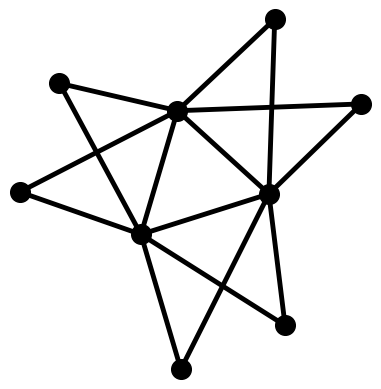}
\end{minipage}
\hspace{.01\linewidth}
\begin{minipage}{.17\linewidth}
\includegraphics[scale=0.22]{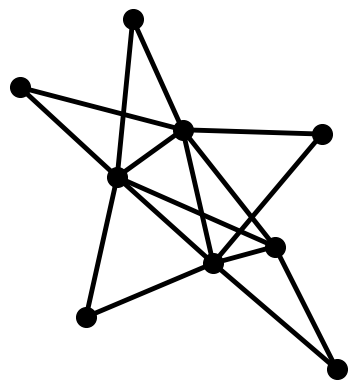}
\end{minipage}
\begin{minipage}{.17\linewidth}
  \includegraphics[scale=0.22]{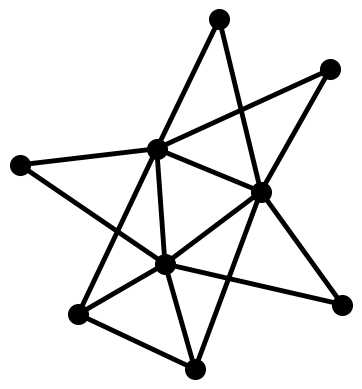}
\end{minipage}
\hspace{.01\linewidth}

\begin{minipage}{.17\linewidth}
  \includegraphics[scale=0.22]{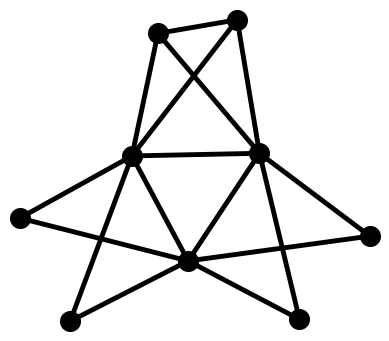}
\end{minipage} \hspace{.01\linewidth}
\begin{minipage}{.17\linewidth}
  \includegraphics[scale=0.22]{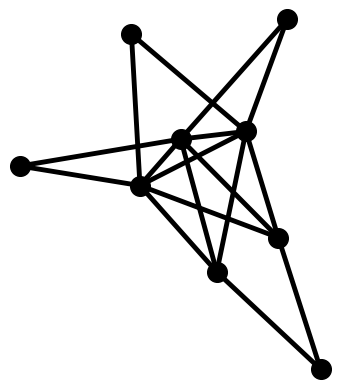}
\end{minipage} \hspace{.01\linewidth}
\begin{minipage}{.17\linewidth}
  \includegraphics[scale=0.22]{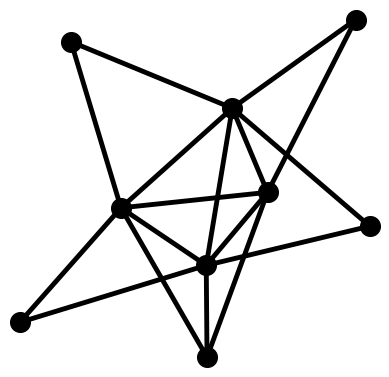}
\end{minipage}

\caption{Graphs on nine vertices in $\mathcal{G}$.}
\label{nine_vertices}
\end{figure}

{\bf Graphs on ten vertices.} There are two graphs on ten vertices in $\mathcal{G}$, the graph in Figure \ref{ten_vertices} and its complement.

\begin{figure}[htbp]
\includegraphics[scale = 0.3]{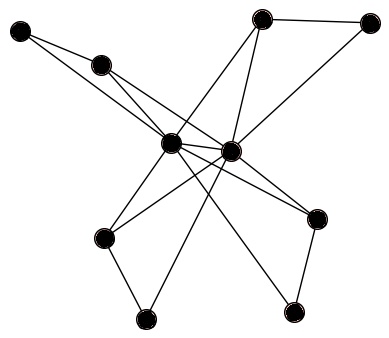}
\caption{A $10$-vertex graph in $\mathcal{G}$.}
\label{ten_vertices}
\end{figure}

\begin{algorithm}
\renewcommand{\thealgorithm}{}
\label{pseudocode2}
\caption{Finding graphs in $\mathcal{G}$ of order at most ten}

\begin{algorithmic}
\REQUIRE $n = 6,7,8,9$ or $10$.
\STATE Set FinalList $\leftarrow \emptyset$, $i \leftarrow 0$, $j \leftarrow 0$
\STATE Generate all two connected graphs of order $n$ using nauty geng \cite{nauty} and store in an iterator $L$

\FOR{$g$ in $L$ such that vertex connectivity of $g$ and $\overline{g}$ is 2}

    \FOR{$v$ in $V(g)$}
        \STATE $h=g \ba v$
        \IF{$h$ is a $2$-cograph}
            \STATE $i \leftarrow i+1$
        \ENDIF
    \ENDFOR

    \FOR{$e$ in $E(g)$}
        \STATE $h=g/e$
        \IF{$h$ is a $2$-cograph}
            \STATE $j \leftarrow j+1$
        \ENDIF
    \ENDFOR

\IF{$i$ equals $|V(g)|$ and $j$ equals $|E(g)|$}

\STATE Add $g$ to FinalList

\ENDIF

\ENDFOR

\FOR {$g$ in FinalList}
    \IF{FinalList does not contain $\overline{g}$}
     \STATE remove $g$ from FinalList
    
    \ENDIF
\ENDFOR

\end{algorithmic}
\end{algorithm}

\FloatBarrier

\section*{Acknowledgement}

The authors thank Guoli Ding for suggesting the problem and for helpful discussions on it. The authors also thank Zachary Gershkoff for help with SageMath.

\end{document}